\numberwithin{equation}{section}
\newcommand{\be}{\begin{eqnarray*}}
\newcommand{\bel}{\begin{eqnarray}}
\newcommand{\ee}{\end{eqnarray*}}
\newcommand{\eel}{\end{eqnarray}}
\newcommand{\ba}{\begin{aligned}}
\newcommand{\ea}{\end{aligned}}
\newcommand{\na}{\nabla}
\newcommand{\pa}{\partial}
\newtheorem{theorem}{Theorem}
\newtheorem{cor}{Corollary}
\newtheorem{lem}{Lemma}
\newtheorem{pro}{Proposition}
\newtheorem{remark}{Remark}
\numberwithin{remark}{section}
\numberwithin{lem}{section}
\newcommand\R{{\mathbb R}}
\newcommand\T{{\mathbb T}}
\newcommand\eps{\epsilon}
\newcommand{\p}{\ensuremath{\partial}}
\newcommand{\n}{\ensuremath{\nonumber}}
\title{Improved well-posedness for the Triple-Deck \\ and related models 
via concavity}
\date{\today}
\author{David Gerard-Varet \footnote{Universit\'e Paris Cité and Sorbonne Universit\'e, CNRS,   Institut de Math\'ematiques de Jussieu-Paris Rive Gauche (IMJ-PRG)  F-75013, Paris, France, \url{david.gerard-varet@imj-prg.fr}}
   \qquad Sameer Iyer \footnote{ Department of Mathematics, University of California, Davis, Davis, CA 95616, \url{sameer@math.ucdavis.edu} } \qquad Yasunori Maekawa \footnote{Department of Mathematics, Graduate School of Science, Kyoto University, Japan, \url{maekawa.yasunori.3n@kyoto-u.ac.jp}}}
\begin{document}
\maketitle

\begin{abstract} We establish linearized well-posedness of the Triple-Deck system in Gevrey-$\frac32$ regularity in the tangential variable, under concavity assumptions on the background flow. Due to the recent result \cite{DietertGV}, one cannot expect a generic improvement of the result of \cite{IyerVicol} to a weaker regularity class than real analyticity. Our approach exploits two ingredients, through an analysis of space-time modes on the Fourier-Laplace side: i) stability estimates at the vorticity level, that involve the concavity assumption and a subtle iterative scheme adapted from \cite{GVMM}  ii)  smoothing properties of the Benjamin-Ono like equation satisfied by the Triple-Deck flow at infinity. Interestingly, our treatment of the vorticity equation also adapts to the so-called hydrostatic Navier-Stokes equations: we show for this system a similar  Gevrey-$\frac32$ linear well-posedness result for concave data, improving at the linear level the recent work \cite{MR4149066}. 
\end{abstract}

\begin{center}
{\it In memory of Anton\'{i}n Novotn\'y}
\end{center}

\section{Introduction}

In this article we are concerned with the wellposedness properties of the Triple-Deck equations:
\begin{subequations} \label{td:1}
\begin{align}
&\p_t u + u \p_x u + v \p_y u - \p_y^2 u = - \p_x p, \\
&\p_x u + \p_y v = 0, \\
&\p_y p = 0
\end{align}
\end{subequations}
which are supplemented with the boundary conditions 
\begin{subequations} \label{td:2}
\begin{align}
&[u, v]|_{y = 0} = 0, \\
&\lim_{y \rightarrow \infty} (u-y) = A(t, x), \\
&\lim_{x \rightarrow \pm\infty} (u - y) = 0,
\end{align}
\end{subequations}
and an initial datum
\begin{align} \label{id}
u|_{t = 0} = y + u_{init}(x, y). 
\end{align}
The key coupling inherent to the Triple-Deck system is the relation that links $A(t, x)$ to the pressure (the so called ``pressure-displacement" relation):
\begin{align} \label{pdr}
p(t, x) = \frac{1}{\pi} p.v. \int_{\mathbb{R}} \frac{\p_x A(x', t)}{x - x'} dx = |\p_x| A(x, t). 
\end{align}

\vspace{2 mm}

The Triple-Deck equations, \eqref{td:1} -- \eqref{id}, are a refinement of the classical Prandtl system, which arise in the study of the zero-viscosity limit of the Navier-Stokes equations in the vicinity of a boundary. 

Indeed, due to the generic mismatch between the no-slip boundary condition imposed for the Navier-Stokes system (let $\vec{U}_\nu = (u_\nu, v_\nu)$ be the Navier-Stokes velocity field with viscosity equal to $\nu > 0$) and the no-penetration condition imposed for Euler (let $\vec{U}_E = (u_E, v_E)$ be the Euler velocity field), one cannot expect the inviscid limit $\vec{U}_\nu \rightarrow \vec{U}_E$ to hold, at least in sufficiently strong topologies (for instance, $L^\infty$ in the variable normal to the boundary). 

Due to this mismatch, characterizing the inviscid limit typically requires matched asymptotic expansions,  of the type first proposed by Prandtl in 1904:  
\begin{equation}
\begin{aligned} \label{Pr:ans}
\vec{U}^{(\nu)}(t,x,Y) & \approx \big[u_E, v_E\big](t,x,Y), \quad Y \gg \nu^{1/2}, \\
 &  \approx \big[u_P, \sqrt{\nu} v_P\big](t,x, \nu^{-1/2}Y), \quad Y \lesssim \nu^{1/2}. 
\end{aligned}
\end{equation}
The leading order vector-field, $[u_P, \sqrt{\nu} v_P]$ appearing in the expansion above is called the Prandtl boundary layer, and can be shown to obey the following limiting system:
\begin{subequations} \label{sys:pr:1}
\begin{align}
&\p_t u_P + u_P \p_x u_P + v_P \p_y u_P + \p_x p_P - \p_y^2 u_P = - \p_x p_E(t, x, 0), \\
&\p_x u_P + \p_y v_P = 0, \\
&\p_y p_P = 0. 
\end{align}
\end{subequations}
with boundary conditions 
\begin{subequations} \label{sys:pr:2}
\begin{align}
&[u_P, v_P]|_{y = 0} = [0, 0], \\
&\lim_{y \rightarrow \infty} u_P = u_E(t,x,0), 
\end{align}
\end{subequations}
together with an initial datum
\begin{align} \label{sys:pr:3}
u_P|_{t = 0} = u_{P,init}(x, y). 
\end{align}

The Prandtl system, \eqref{sys:pr:1} -- \eqref{sys:pr:3} is classical in fluid dynamics, and has been the source of intense investigation from the mathematical fluid dynamics point of view. As the Prandtl system itself is \textit{not} the main focus of study in this article, we refer to the (non-exhaustive) list of references \cite{MR2601044,MR2952715,MR2849481,MR3284569,MR2975371,MR3429469,MR3385340,MR3464051,MR3461362,MR3925144,LiMaYa,MR4097332,MR4271962}. 

Deriving the Prandtl system from the Navier-Stokes equations requires the formal asymptotic expansion \eqref{Pr:ans}, whihc relies itself implicitly on  a few hypotheses in order to be ``valid" (though, as mentioned above, the mathematical validity has only recently been proven/ disproven). One such hypothesis is the relative smallness of tangential derivatives of $\vec{U}_\nu$ compared to normal derivatives. However, in the vicinity of boundary layer separation, the flow is anticipated to form large tangential gradients which therefore falls outside the regime of the standard Prandtl ansatz, \eqref{Pr:ans}.

To account for this, several reduced models have been derived which incorporate the small tangential scales that are inherently present near the separation point. One famous such model is the Triple-Deck, \eqref{td:1} -- \eqref{id}. This system was introduced by Lighthill, \cite{Lighthill}, Stewartson, \cite{Stewartson}, and several other fluid-dynamicists in the twentieth century. It is useful to keep in mind Figure \ref{tdscalepic} which summarizes the scales used to derive the equations. 
\begin{figure}[h]
\centering
\includegraphics[scale=.4]{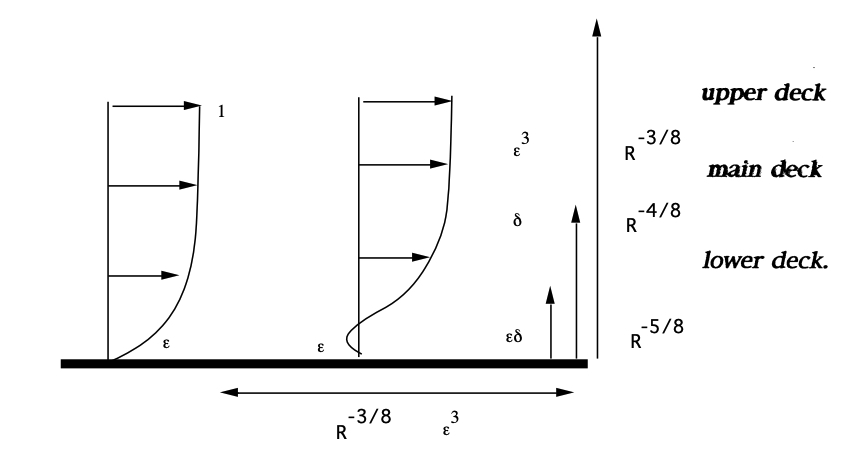}
\caption{Triple-Deck Scales, \cite{Lagree}}
\label{tdscalepic}
\end{figure}

Comparing the Triple-Deck model to the classical Prandtl equation, we see several new mathematical features that are observed to be true near the separation point. Chief among these is \eqref{pdr}, which, physically, represents the velocity at $y = \infty$ entering the fluid domain. Formally, substituting \eqref{pdr} into the momentum equation, \eqref{td:1}, we observe that the momentum equation is forced by $-\p_x |\p_x| A = - \p_x |\p_x| u(t, x, \infty)$, which is a loss of two tangential derivatives and therefore a full derivative too singular to be consistent with even real-analytic wellposedness. Nevertheless, exploiting $L^2$ anti-symmetry of the operator $\p_x |\p_x|$, Iyer-Vicol established in \cite{IyerVicol} that the Triple-Deck system is wellposed in real-analytic spaces. Given this result, a natural question is to weaken the regularity required for wellposedness. However, a recent result of \cite{DietertGV} shows that the Triple-Deck is generically \textit{illposed} in any Gevrey space below real-analyticity, thereby rendering the real-analytic result \cite{IyerVicol} as essentially sharp.  See also \cite{MR3835243} for ill-posedness results in the same spirit. 

As is standard for many wellposedness/illposedness results, the work of \cite{DietertGV} considers the linearized Triple-Deck equations around a smooth shear flow, $V_s = V_s(y) = y + U_s(y)$, which reads 
\begin{subequations} \label{ltd:1}
\begin{align}
&\p_t u + V_s \p_x u + v \p_y V_s - \p_y^2 u = - \p_x |\p_x A|, \label{ltd:1a}\\ 
&\p_x u + \p_y v = 0, 
\end{align}
\end{subequations}
and with boundary and initial conditions 
\begin{subequations} \label{ltd:2}
\begin{align}
 [u,v]\vert_{y=0} & = 0, \\
\lim_{y \rightarrow \infty} u & = A(t, x), \\
 \lim_{x \rightarrow \pm\infty} u & = 0, \\
u\vert_{t=0} & = u_{init}.
\end{align}
\end{subequations}
In this paper, we show that under concavity assumptions on the  shear flow $U_s(y)$, the illposedness mechanism from \cite{DietertGV} no longer holds, and in fact the result of \cite{IyerVicol} can be improved to Gevrey-$\frac32$. In particular, we will assume the following on our background shear flow: $U_s$ is bounded, in $C^3(\overline{\mathbb{R}_+})$, and 
\begin{subequations} \label{assume:Us}
\begin{align}  \label{assume:Us:1}
&U_s'' < 0, \\ \label{assume:Us:2}
&\sup_{y} \langle y \rangle^{6} |U_s''| < \infty, \\ \label{assume:Us:3}
&\sup_y | \frac{U_s'''}{U_s''} | < \infty, \\ \label{assume:Us:4}
&U_s(0) = 0
\end{align}
\end{subequations}
\begin{remark} It will be convenient to choose the normalization 
\begin{align} \label{normalizeinfty}
U_s(\infty) = 1, 
\end{align}
though this is simply to alleviate some notation and remove factors of $U_s(\infty)$ appearing in the analysis. 
\end{remark}

Our main result is as follows.
\begin{theorem} \label{thm:main} Assume the shear flow $V_s(y) = y + U_s(y)$ is given such that $U_s$ satisfies \eqref{assume:Us}-\eqref{normalizeinfty}. Assume that the initial data has Gevrey $3/2$ regularity in $x$ and Sobolev regularity in $y$, namely that for some $c_0 > 0$, 
$$  \| e^{c_0 |\pa_x|^{2/3}}(1+y)^3  \pa_y u_{init}\|_{L^2(\R \times (0,1))}  < +\infty $$
together with the Dirichlet condition $u_{init}\vert_{y=0} = 0$.  Then, system \eqref{ltd:1}-\eqref{ltd:2} has a unique local in time solution  that obeys the following estimate, for some constants $\beta, C, s > 0$, and for all time $t < \frac{c_0}{\beta}$: 
\begin{equation}
\|  e^{(c_0 - \beta t) |\pa_x|^{2/3}} (1+y)^2 \pa_y u(t, \cdot)\|_{L^2(\R \times (0,1))} \le C  \| e^{c_0 |\pa_x|^{2/3}} (1+|\pa_x|)^s  (1+y)^3 \pa_y u_{init}\|_{L^2(\R \times (0,1))} 
\end{equation}
\end{theorem}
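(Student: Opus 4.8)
The plan is to work on the Fourier side in $x$ and the Laplace side in $t$, reducing the linear evolution to a family of ODEs in $y$ parametrized by the tangential frequency $\xi$ and the dual time variable. Writing $\hat u(\xi, y)$ for the Fourier transform in $x$ and passing to the vorticity $\omega = \partial_y u$, one obtains a resolvent equation of the form $-\partial_y^2 \omega + (\lambda + i\xi V_s)\omega = $ (source terms), coupled through the pressure–displacement relation $p = |\xi| A$ and the boundary condition $u(\xi, \infty) = A$. The key structural input is the concavity $U_s'' < 0$: it makes the operator $\omega \mapsto -\partial_y^2\omega + i\xi V_s \omega$ behave favorably after the appropriate conjugation, and it rules out the unstable modes exploited in \cite{DietertGV}. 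The first step is therefore to set up this Fourier–Laplace framework carefully, record the boundary conditions the transformed unknowns must satisfy, and isolate the two regimes: the interior vorticity equation (a "Rayleigh/Orr–Sommerfeld"-type problem) and the behavior at $y = \infty$, which is governed by a Benjamin–Ono-type equation because $A$ satisfies an equation with the $|\partial_x|$ nonlocal term.

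Second, I would prove the core resolvent estimate at the vorticity level. This is where the concavity enters through a weighted energy identity: testing the vorticity equation against $\bar\omega/U_s''$ (or a suitable variant with the $\langle y\rangle$-weights dictated by the norms in the theorem) produces a coercive quadratic form whose sign is controlled by \eqref{assume:Us:1}, with lower-order terms absorbed using \eqref{assume:Us:2}--\eqref{assume:Us:3}. The quantitative loss in this estimate is what produces the Gevrey-$\tfrac32$ exponent: one expects a bound of the type $\|\omega\| \lesssim |\xi|^{1/3}$-type weight in the bad region, i.e. a gain that, after summing over frequencies with the weight $e^{(c_0 - \beta t)|\xi|^{2/3}}$, closes. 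Following the remark in the introduction, this is where I would import the "subtle iterative scheme adapted from \cite{GVMM}": the naive energy estimate is not self-contained because the source terms reintroduce $\omega$ at comparable strength, so one iterates the estimate on dyadic-in-$y$ or dyadic-in-$\lambda$ blocks, each step gaining a controlled power, and sums the geometric-type series. The smoothing of the Benjamin–Ono-like equation for $A$ is then used to close the coupling: it gives enough regularity of $A$ in $\xi$ (a gain compatible with $|\xi|^{2/3}$) to feed back into the boundary term $u(\xi,\infty) = A$ without losing the full tangential derivative that \eqref{pdr} threatens.

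Third, I would convert the frequency-by-frequency resolvent bounds into a space-time estimate. Concretely, one shows that for $\mathrm{Re}\,\lambda$ above a line of slope $\sim \beta |\xi|^{2/3}$ the resolvent is bounded with the stated weighted norms; inverting the Laplace transform along that contour and using Plancherel in $\xi$ then yields the $e^{(c_0 - \beta t)|\xi|^{2/3}}$ decay of the analyticity radius and the loss of one power of $\langle y\rangle$ (from weight $(1+y)^3$ on the data to $(1+y)^2$ on the solution, the extra power being spent in the weighted energy estimates). Uniqueness follows from the same resolvent bound applied to the difference of two solutions. The small loss of $s$ tangential derivatives on the right-hand side is the standard price of the contour shift.

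The main obstacle, as anticipated by the authors, is Step 2: making the concavity-based weighted energy estimate genuinely quantitative and closing the iteration with the correct $|\xi|^{2/3}$ bookkeeping. The division by $U_s''$ is delicate near $y = \infty$, where $U_s'' \to 0$ (only $\langle y\rangle^{-6}$ control is assumed), so the weights must be chosen to balance the degeneracy of $U_s''$ against the $\langle y\rangle$-decay required of $\omega$; simultaneously one must handle the coupling to $A$ through the nonlocal pressure term without letting the full-derivative loss in \eqref{pdr} destroy the gain. Getting the iteration in \cite{GVMM}-style to converge with exactly the exponent $2/3$ — rather than $1/2$ (analytic) or worse — is the crux of the argument.
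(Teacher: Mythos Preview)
Your high-level architecture is right: Fourier in $x$, Laplace in $t$, a resolvent estimate for $\Re\lambda \gtrsim |k|^{2/3}$, weighted energy using concavity, an iteration in the spirit of \cite{GVMM}, and inversion of the Laplace transform. But two concrete mechanisms are misidentified or missing, and without them the argument does not close.

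First, the iteration is not ``dyadic-in-$y$ or dyadic-in-$\lambda$ blocks.'' The scheme alternates between two \emph{different boundary-value problems}: a \emph{hydrostatic} step where one keeps the full stretching term $ikU''_s\mathcal V_y[\omega]$ but imposes the artificial Neumann condition $\partial_y\omega|_{y=0}=0$ (this is what makes the concavity weight $1/(-U''_s)$ work, since the boundary term from $-\partial_y^2\omega$ vanishes), and a \emph{boundary-layer} step where one drops the stretching term entirely and instead restores the correct mean $\mathcal U[\omega]$. Each step leaves an error (wrong mean, resp.\ missing stretching) that the next step of the other type corrects; the gain per loop is $|k|\,|\mathcal U[F_H]|\lesssim |k|/(\Re\lambda\,|\lambda|^{1/2})$, which is $<1$ precisely when $\Re\lambda\gtrsim|k|^{2/3}$. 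This is where the exponent $2/3$ actually comes from. Relatedly, your worry about $U''_s\to 0$ is resolved not by tuning the $\langle y\rangle$-weights alone but by replacing $U''_s$ in the weight with the $k$-dependent surrogate $U''_{s,k}:=U''_s-|k|^{-2/3}(1+y)^{-6}$, which is uniformly negative and satisfies $(1+y)^6\lesssim 1/(-U''_{s,k})\lesssim |k|^{2/3}(1+y)^6$. The extra $|k|^{2/3}$ that this substitution costs is exactly compensated by the Hardy-type inequalities in the stretching commutator.

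Second, your treatment of the $A$-coupling is too vague to close. The paper splits $\{\Re\lambda\ge K_\ast|k|^{2/3}\}$ into two regions. When $|\lambda+ik|k||\ge\tfrac12|k|^2$ the Benjamin--Ono symbol is large and a direct division works. But when $|\lambda+ik|k||\le\tfrac12|k|^2$ (so $|\lambda|\gtrsim|k|^2$ but the BO symbol is small), the naive estimate fails and one must couple the $A$-equation to the \emph{leading} hydrostatic piece $f_H=\hat A\,\omega_H^{(0)}$ and exploit the exact cancellation
\[
\Re\Big(ik\,\mathcal V[f_H]\,\overline{\hat A}-\langle ik\,y\hat A,\,f_H\rangle\Big)=0,
\]
together with $\Re\langle ik\,\mathcal V_y[f_H],f_H\rangle=0$. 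Only after this cancellation do the remaining BL and tail contributions become perturbative. ``Benjamin--Ono smoothing'' by itself is not enough; this algebraic cancellation with the hydrostatic leading term is the missing ingredient.

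A final technical point you will also need: because $V_s(y)=y+U_s(y)$ is unbounded, sectoriality (hence the semigroup and the contour inversion) is obtained by first truncating to $V_{s,N}$, proving the resolvent estimates uniformly in $N$, and passing to the limit; the loss of one $\langle y\rangle$-weight in the theorem statement is precisely what makes the $N$-uniform bound possible.
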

The proof will be outlined in the next section. It relies on delicate estimates on the couple $(\omega, A)$, where $\omega = \pa_y u$ is an analogue of the vorticity adapted to this anistropic model.  One key aspect is the derivation of estimates on $\omega$, given $A$. This is where the concavity assumption plays a role. The stabilizing effect of concavity in inviscid flows has been well-known since the pioneering works of Lord Rayleigh, and has been exploited in the proof of various mathematical stability results \cite{MR1690189,MR1761422,MR3385340}. But associated mathematical techniques do not easily transfer to viscous flows, due to vorticity creation at the boundary, which is a potential source of other instabilities. To overcome this problem in the context of the linearized Triple-Deck model, we derive estimates using an iterative scheme inspired from the work \cite{GVMM} on stability of Prandtl solutions of the Navier-Stokes equations. The main ideas behind this scheme are explained in Paragraph \ref{subsec:iteration}. Its convergence requires Gevrey $3/2$ regularity. Once the estimate for $\omega$ is obtained, we turn to the horizontal velocity at infinity $A = A(t,x)$. Here, we make a crucial use of the Benjamin-Ono like equation satisfied by $A$. Distinguishing between several regions of the spectral plane $(\lambda,k)$ (after Laplace transform in time, Fourier transform in $x$), we manage to obtain good resolvent estimates for the linearized Triple-Deck model, from which Gevrey stability estimates follow. 

Our scheme for the derivation of vorticity estimates  has applications beyond the Triple-Deck model. It notably applies to the study of the so-called hydrostatic Navier-Stokes equation, which stems from the analysis of the usual  Navier-Stokes equation in a narrow channel of width $\eps$:
\begin{equation*}
\begin{aligned}
\pa_t \vec{u}  + \vec{u} \cdot \na \vec{u}   - \nu \Delta \vec{u}  +  \na p & = 0, \quad x \in \R, z \in (0,\eps), \\
\nabla \cdot \vec{u} & = 0,  \quad  x \in \T, z \in (0,\eps), \\
\vec{u}\vert_{z=0,\eps} & = 0. 
\end{aligned}
\end{equation*}
In the case where $\nu \sim \eps^2$, approximation 
$$ \vec{u} \sim  \Big( u(t,x,z/\eps), \eps v(t,x,z/\eps) \Big) $$
yields to the reduced model: 
 \begin{equation} \label{HNS} 
\begin{aligned}
\pa_t u   + u \pa_x u + v \pa_y u - \pa^2_y u +   \pa_x p & = 0, \quad x \in \R, y \in (0,1), \\
\pa_y p  & = 0,  \quad x \in \R, y \in (0,1), \\
\pa_x u + \pa_y v  & = 0,  \quad x \in \T, y \in (0,1), \\
u\vert_{y=0,1} = v\vert_{y=0,1} & = 0. 
\end{aligned}
\end{equation}
This model shares features with the Triple-Deck model. For general data, one can not expect more than local analytic well-posedness, due to a strong inviscid  instability mechanism identified in \cite{Renardy09}. On the contrary, under concavity (or convexity) of the initial data, it is known that  the hydrostatic Euler equation is well-posed in Sobolev regularity \cite{MR2898740}. It is then natural to ask what remains of this improved stability in the presence of diffusion, namely for system \eqref{HNS}. 
A partial answer was brought very recently in \cite{MR4149066}, where a local well-posedness result was achieved in Gevrey regularity, but with an exponent $8/7$ close to $1$ for technical reasons. See also \cite{MR4125518,LiLiTi} for related works. 

It turns out that adapting the methodology of the present paper, one can improve such result at the linear level, by establishing Gevrey $3/2$ well-posedness for the problem: 
\begin{equation} \label{LHNS} 
\begin{aligned}
\pa_t u   + U_s \pa_x u + v U_s' - \pa^2_y u +   \pa_x p & = 0, \quad x \in \R, y \in (0,1), \\
\pa_y p  & = 0,  \quad x \in \R, y \in (0,1), \\
\pa_x u + \pa_y v  & = 0,  \quad x \in \R, y \in (0,1), \\
u\vert_{y=0,1} = v\vert_{y=0,1} & = 0. 
\end{aligned}
\end{equation}
where $U_s = U_s(y)$ is again a concave shear flow (convex shear flow would work as well). Namely, we have the following result, very similar to Theorem \ref{thm:main}: 
\begin{theorem} \label{thm:main2} Assume that  the shear flow $U_s$ is smooth and strictly concave on $[0,1]$. Assume that the initial data has Gevrey $3/2$ regularity in $x$ and Sobolev regularity in $y$, namely that for some $c_0 > 0$, 
$$  \| e^{c_0 |\pa_x|^{2/3}}  \pa_y u _{init}\|_{L^2(\R \times \R_+)}  < +\infty $$
together with the  conditions $u_{init}\vert_{y=0,1} = 0$, $\int_0^1 \pa_x u_{init} dy = 0$.  Then, system \eqref{LHNS} has a unique  local in time solution with initial data $u_{init}$,  that obeys the following estimate, for some constants $\beta, C, s > 0$, and for all time $t < \frac{c_0}{\beta}$: 
\begin{equation}
\|  e^{(c_0 - \beta t) |\pa_x|^{2/3}}  \pa_y u(t, \cdot)\|_{L^2(\R \times \R_+)} \le C  \| e^{c_0 |\pa_x|^{2/3}} (1+|\pa_x|)^s  \pa_y u_{init}\|_{L^2(\R \times \R_+)} 
\end{equation}
\end{theorem}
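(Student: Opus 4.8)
\emph{Overall strategy.} The plan is to follow the architecture of the proof of Theorem~\ref{thm:main}, with the crucial simplification that the pressure in \eqref{LHNS} costs no derivatives (there is no Benjamin--Ono coupling to control), so that the vorticity analysis is the only genuinely delicate ingredient. First I would pass to the Fourier--Laplace side: Fourier transform in $x$ with dual variable $k\in\R$, Laplace transform in $t$ with dual variable $\lambda\in\Complex$ on a line $\Re\lambda=\gamma$. The stated Gevrey-$3/2$ estimate is then equivalent to a family of resolvent bounds: one must solve, for each $(\lambda,k)$ with $\Re\lambda$ bounded below by a suitable multiple of $|k|^{2/3}$, a one-dimensional boundary value problem for $\widehat{\omega}(\lambda,k,\cdot)$, $\omega=\pa_y u$, with $L^2_y$ bounds uniform in $(\lambda,k)$ up to a polynomial loss $(1+|k|)^s$; inverting the Laplace transform along the contour $\Re\lambda=\beta|k|^{2/3}$ then produces the linearly shrinking Gevrey radius $c_0-\beta t$, and uniqueness follows from the same estimate applied to a difference of solutions.

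\emph{The vorticity equation.} Differentiating the momentum equation in \eqref{LHNS} in $y$ and using incompressibility, the pressure drops out, and on the Fourier--Laplace side one obtains
\begin{equation*}
(\lambda + ik U_s)\,\widehat{\omega} - \pa_y^2\widehat{\omega} = -U_s''\,\widehat{v} + \widehat{\omega}_{init},\qquad \widehat{v}(y) = -ik\int_0^y\widehat{u},\quad \widehat{u}(y)=\int_0^y\widehat{\omega},
\end{equation*}
together with $\pa_y\widehat{\omega}\vert_{y=0}=\pa_y\widehat{\omega}\vert_{y=1}=ik\widehat{p}$, where $\widehat{p}=\widehat{p}(\lambda,k)$ is a scalar unknown fixed by the zero-flux constraint $\int_0^1\widehat{u}\,dy=0$; the remaining boundary condition $\widehat{u}\vert_{y=1}=0$, i.e. $\int_0^1\widehat{\omega}\,dy=0$, turns out to be automatically satisfied for $\lambda\neq0$ thanks to $u_{init}\vert_{y=0,1}=0$ (integrate the equation in $y$). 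Thus the full problem reduces to a well-posed ODE--BVP for $\widehat{\omega}$ on $(0,1)$, with the pressure appearing only through the common Neumann value at the two walls.

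\emph{The vorticity estimate (core step).} This is where concavity $U_s''<0$ enters, and where the method of \cite{GVMM} is adapted. I would partition the spectral plane $(\lambda,k)$ into a family of ``easy'' regions and one ``critical'' region. In the easy regions --- where either $\Re\lambda$ dominates $|k|^{2/3}$, or $|k|$ dominates the remaining parameters --- a direct weighted energy estimate closes: multiply the vorticity equation by $\overline{\widehat{\omega}}$, integrate in $y$, and use the dissipation $\|\pa_y\widehat{\omega}\|_{L^2_y}^2$ and $\Re\lambda$, treating the coupling $-U_s''\widehat{v}$ perturbatively via $\|\widehat{v}\|_{L^2_y}\lesssim|k|\,\|\widehat{u}\|_{L^2_y}\lesssim|k|\,\|\widehat{\omega}\|_{L^2_y}$ against the gain from diffusion, and controlling the boundary and pressure terms through the solvability condition for $\widehat{p}$. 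The critical region --- a thin neighbourhood of the locus where $\Re(\lambda+ikU_s(y))$ vanishes for some $y\in(0,1)$, i.e. where an Airy-type critical layer forms --- is handled by the iterative scheme: build approximate solutions $\widehat{\omega}^{(n)}$ by successively correcting the $-U_s''\widehat{v}$ coupling, and at each step exploit the sign of $U_s''$ through a Rayleigh-type renormalisation by the background profile, which converts the otherwise-dangerous critical-layer contribution into a definite-sign quantity. Each iteration gains smallness but costs a factor $|k|^{1/3}$ (the Airy scaling of $-\pa_y^2+ikU_s$), so the series converges only after multiplication by the Gevrey-$3/2$ weight $e^{c|k|^{2/3}}$ --- this is precisely what forces the exponent $3/2$. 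One also has to carry the two boundary terms at $y=0,1$ and the pressure coupling $ik\widehat{p}$ through the iteration without spoiling the favourable sign.

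\emph{Closing the argument and the main obstacle.} With the uniform $L^2_y$ resolvent bound on $\widehat{\omega}$ in hand, reconstruct $\widehat{u}=\int_0^y\widehat{\omega}$ and $\widehat{p}$, and invert the Laplace transform along $\Re\lambda=\beta|k|^{2/3}$; the polynomial factor $(1+|\pa_x|)^s$ absorbs the low-frequency and contour-deformation technicalities, and uniqueness is immediate from the resolvent estimate. The main obstacle is the convergence of the iteration in the critical region: quantifying, uniformly in $(\lambda,k)$, that the gain at each step beats the $|k|^{1/3}$ loss, and organising the boundary and pressure terms so that the concavity-induced sign survives the renormalisation --- this is the heart of the matter, and the reason the Gevrey-$3/2$ scale is both natural and, in view of the discussion following \eqref{pdr}, essentially optimal for this approach.
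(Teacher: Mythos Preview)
Your high-level architecture---pass to Fourier--Laplace, work at the vorticity level, run an iteration that separates a ``hydrostatic'' piece (where concavity kills the stretching term through a weighted estimate) from a ``boundary layer'' piece (that restores the true boundary condition), and close the series under $\Re\lambda\gtrsim|k|^{2/3}$---matches the paper. But there is a concrete gap in how you set up the iteration on $(0,1)$.

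You recover $\widehat v$ from $\widehat\omega$ via $\widehat v(y)=-ik\int_0^y\widehat u=-ik\int_0^y\int_0^{y'}\widehat\omega$. This is the direct transplant of the Triple-Deck operators $\mathcal U,\mathcal V_y$, and the paper warns explicitly (beginning of Section~\ref{sec:HNS}) that with this choice the iteration \emph{does not converge}. The reason: at each step the boundary-layer corrector $\omega_{BL}^{(j)}$ will not satisfy the zero-flux constraint, so the associated $\widehat v$ will not vanish at $y=1$; correcting that Dirichlet defect on $v$ produces, in the analogue of \eqref{gtgt}, a source term $\Phi[\Omega_{BL}]$ of size $O(1)$ rather than $O(|\lambda|^{-1/2})$, and the common ratio of the geometric series is then $O(|k|/\Re\lambda)=O(|k|^{1/3})\gg1$. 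The fix the paper uses is to reformulate through the stream function $\Phi[\omega]$ solving $\pa_y^2\Phi[\omega]=\omega$, $\Phi[\omega]\vert_{y=0,1}=0$, so that $v=-\Phi[\omega_x]$ vanishes at both walls \emph{automatically} for every iterate. The boundary-layer step then only has to correct the two Dirichlet traces of $u$, encoded in the $\R^2$-valued operator $\mathcal U[\omega]=(\pa_y\Phi[\omega](0),\pa_y\Phi[\omega](1))^t$; accordingly $\Omega_{BL}$ and $F_H$ become vector-valued and the common ratio becomes a $2\times2$ matrix $-ikM_H$ with norm $\lesssim |k|/(\Re(\lambda)|\lambda|^{1/2})\ll1$.

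Two smaller corrections. First, the convergence mechanism is not ``each step costs $|k|^{1/3}$ and Gevrey weights absorb it''; rather, the hydrostatic weight $(-U_s'')^{-1}$ produces the cancellation $\Re\langle ik\Phi[f],f\rangle=0$, which removes the dangerous $|k|$ from the stretching term entirely, and the boundary-layer step contributes the extra $|\lambda|^{-1/2}$ decay that makes the ratio small. There is no Airy/critical-layer analysis here. Second, your Neumann condition $\pa_y\widehat\omega\vert_{y=0,1}=ik\widehat p$ is incomplete: evaluating the momentum equation at the walls gives \eqref{neumann_bc_LHNS}, which involves $\widehat\omega\vert_{y=0}$, $\widehat\omega\vert_{y=1}$, and an integral of $U_s\pa_y\Phi[\widehat\omega]$, not just the pressure.
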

The proof of this theorem will be quickly explained in the last Section \ref{sec:HNS}. It is very similar in  spirit with the analysis carried for the Triple-Deck, without difficulties coming from the coupling with the unknown $A$.

\section{Outline of the proof}
We explain here the main steps in the proof of Theorem \ref{thm:main}. Due to the several averaging operators we have in our analysis, we will introduce the following notations:
\begin{align} \label{defn:cal:op}
&\mathcal{U}[\omega] := \int_0^\infty \omega, \qquad \mathcal{U}_y[\omega] := \int_y^{\infty} \omega, \\
 & \mathcal{V}[\omega] := \int_0^{\infty} \int_{\infty}^y \omega, \qquad  \mathcal{V}_y[\omega] := \int_0^y \int_{\infty}^{y'} \omega, \qquad \mathcal{V}^{(-)}_y[ \omega] := \int_y^{\infty} \int_{y'}^{\infty} \omega. 
\end{align}

\subsection{$(\omega, A)$ Decomposition} \label{subsec:decompo}

Our starting point is to decompose \eqref{ltd:1} into two coupled equations: one governing the vorticity, $\omega := \p_y u$, and one governing the trace at infinity $A(t, x)$. To derive the evolution governing $A$, we evaluate \eqref{ltd:1} at $y = \infty$, whereas to obtain the evolution governing $\omega$, we differentiate \eqref{ltd:1} with respect to $y$. In both cases, we use the identity  
\begin{align} \label{v:id:1}
v = -\int_0^y u_x = - \int_0^y  (A_x - \int_{y'}^{\infty} \omega_x) = - y A_x + \int_0^y \int_{y'}^{\infty} \omega_x = - y A_x - \mathcal{V}_y[\omega_x].
\end{align}
We thus obtain the vorticity-Benjamin-Ono equation 
\begin{subequations} \label{vbo}
\begin{align}
\label{vboa} &\p_t A + \p_x A + \p_x |\p_x| A = \mathcal{V}[\omega_x], \\ 
\label{vbob} &\p_t \omega + V_s \p_x \omega - U_s'' \mathcal{V}_y[\omega_x] - \p_y^2 \omega = y U_s'' A_x, \\
\label{vboc} &\mathcal{U}[\omega] = A. 
\end{align}
\end{subequations}
Conversely, starting from a solution $(\omega,A)$ of \eqref{vbo} with $(\omega, A)\vert_{t=0} = (\pa_y u_{init}, u_{init}(x,\infty))$, it is easy to check that the triplet $(u,v,A)$, where $u := \int_0^y \omega$, $v := - \int_0^y \pa_x u$ satisfies \eqref{ltd:1}-\eqref{ltd:2}. We insert here a proof of this fact.
\begin{lem} \label{lem:u:int}Assume the tuple $(\omega, A)$ satisfies \eqref{vbo}. Then $u := \int_0^y \omega$ satisfies \eqref{ltd:1} -- \eqref{ltd:2}. 
\end{lem}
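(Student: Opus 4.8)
The plan is to verify that $(u,v,A)$ defined by $u := \int_0^y \omega$ and $v := -\int_0^y \pa_x u$ satisfies each of the equations and boundary conditions in \eqref{ltd:1}-\eqref{ltd:2}, using the three relations in \eqref{vbo}. The verification of the incompressibility condition $\pa_x u + \pa_y v = 0$ is immediate from the definition of $v$, and $v|_{y=0} = 0$ is immediate as well; $u|_{y=0} = 0$ follows since $u = \int_0^y \omega$. The identity \eqref{vboc}, namely $\mathcal{U}[\omega] = A$, gives $\lim_{y\to\infty} u = \int_0^\infty \omega = A(t,x)$, which is the boundary condition at infinity; one should note here that this requires $\omega$ to be integrable in $y$, which is part of the functional setting. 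The decay as $x \to \pm\infty$ is inherited from the corresponding decay of $\omega$ and $A$ in the solution class. The initial condition $u|_{t=0} = \int_0^y \pa_y u_{init} = u_{init}$ (using $u_{init}|_{y=0}=0$) is similarly straightforward.

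The substantive point is to recover the momentum equation \eqref{ltd:1a}. First I would establish the pressure-displacement relation: combining \eqref{vboa} with \eqref{vboc} and the definition of $v$, one checks that $\mathcal{V}[\omega_x] = \int_0^\infty \int_\infty^y \omega_x = -\int_0^\infty (A_x - u_x)\,dy + (\text{boundary terms})$; more directly, from \eqref{v:id:1} one has $v(t,x,\infty) = -\infty \cdot A_x + \mathcal{V}[\omega_x]$ formally, but the cleaner route is to observe that evaluating the desired momentum equation at $y = \infty$ must reproduce \eqref{vboa}. So the strategy is: define $p := |\pa_x| A$ (equivalently, one wants to show that the quantity forcing the equation is $-\pa_x p$ with this $p$), and show that
\begin{equation*}
\mathcal{R} := \pa_t u + V_s \pa_x u + v \pa_y V_s - \pa_y^2 u + \pa_x |\pa_x| A
\end{equation*}
vanishes. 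Differentiating $\mathcal{R}$ in $y$ and using $\pa_y u = \omega$, $\pa_y v = -\pa_x u = -(A_x - \mathcal{U}_y[\omega_x])$ (from \eqref{v:id:1}), $\pa_y V_s = V_s' = 1 + U_s'$, and $\pa_y^2 V_s = U_s''$, one computes $\pa_y \mathcal{R} = \pa_t \omega + V_s \pa_x \omega + (\pa_y v)(1+U_s') + v\, U_s'' - \pa_y^2 \omega$; substituting $v = -yA_x - \mathcal{V}_y[\omega_x]$ and $\pa_y v = -A_x + \mathcal{U}_y[\omega_x]$ and simplifying, the terms involving $V_s' = 1+U_s'$ should telescope, leaving precisely $\pa_y \mathcal{R} = \pa_t \omega + V_s \pa_x \omega - U_s'' \mathcal{V}_y[\omega_x] - \pa_y^2 \omega - y U_s'' A_x$, which is zero by \eqref{vbob}. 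Hence $\mathcal{R}$ is independent of $y$, so $\mathcal{R}(t,x,y) = \mathcal{R}(t,x,\infty)$.

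It then remains to show $\mathcal{R}(t,x,\infty) = 0$. Evaluating at $y = \infty$: $u(t,x,\infty) = A$, $\pa_y u(t,x,\infty) = \omega(t,x,\infty) = 0$ (vorticity decay), $\pa_y^2 u(t,x,\infty) = \pa_y \omega(t,x,\infty) = 0$, the term $v\,\pa_y V_s$ requires care since $v$ grows linearly but $\pa_y V_s \to 1$; here one must use that $v(t,x,\infty)$ is finite, which forces a cancellation — indeed from \eqref{v:id:1}, $v = -yA_x + \int_0^y \mathcal{U}_{y'}[\omega_x]\,dy'$, and finiteness of $v$ at infinity is where the identity $\mathcal{U}[\omega]=A$, i.e. \eqref{vboc}, enters decisively (it guarantees $\int_0^\infty \mathcal{U}_{y'}[\omega_x]\,dy' $ behaves like $y A_x$ at leading order). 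Carefully, $V_s \pa_x u \to V_s A_x$ also grows, and $v \pa_y V_s$ — the linear growths in $V_s \pa_x u$ and in $v$ must cancel. Assembling, the finite part gives $\mathcal{R}(t,x,\infty) = \pa_t A + \pa_x A + \pa_x|\pa_x|A - \mathcal{V}[\omega_x] = 0$ by \eqref{vboa}. The main obstacle is precisely this passage to $y = \infty$: making rigorous the cancellation of the linearly-growing terms and justifying that all the limits and the integration-by-parts identities hold in the chosen functional class (integrability and decay of $\omega$ in $y$, finiteness of $v$ at infinity). Everything else is bookkeeping with the definitions and the three equations of \eqref{vbo}.
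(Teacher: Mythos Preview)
Your approach is essentially identical to the paper's: show that the $y$-derivative of the momentum-equation residual vanishes (the paper packages this as the identity $\p_y\{\p_t u + V_s \p_x u + v V_s' - \p_y^2 u\} = \p_t \omega + V_s \p_x \omega + v V_s'' - \p_y^2 \omega$, which is exactly your ``telescoping''), deduce the residual is a function $C(t,x)$ of $(t,x)$ only, and then evaluate at $y=\infty$ to identify $C(t,x) = -\p_x|\p_x|A$ via \eqref{vboa}. One minor slip: your displayed intermediate formula for $\p_y\mathcal{R}$ drops the term $V_s'\,\p_x u$ coming from $\p_y(V_s\,\p_x u)$; it is precisely this term that cancels against $(\p_y v)V_s'$ by incompressibility, and with it restored your computation and the evaluation at infinity go through as you describe.
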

\begin{proof} The velocity $u := \int_0^y \omega$ automatically satisfies $u|_{y = 0} = 0$. By using the identity 
\begin{align*}
\p_y \{\p_t u + V_s \p_x u + v V_s' - \p_y^2 u \} =  \p_t \omega + V_s \p_x \omega + v V_s'' - \p_y^2 \omega, 
\end{align*}
coupled with the identity \eqref{v:id:1}, the equations \eqref{vbob} and \eqref{vboc} implies 
\begin{subequations}
\begin{align} \label{u:int:eq:1}
&\p_t u + V_s \p_x u - (y A_x + \mathcal{V}_y[\omega_x]) V_s' - \p_y^2 u = C(t, x), \\ \label{u:int:eq:2}
&u|_{y = 0} = 0, \qquad u|_{y = \infty} = A
\end{align}
\end{subequations}
for an undetermined function $C(t, x)$. To determine $C(t, x)$, we evaluate \eqref{u:int:eq:1} at $y = \infty$, which produces
\begin{align}
\p_t A + \p_x A - \mathcal{V}[\omega_x] = C(t, x)
\end{align}
Upon invoking \eqref{vboa}, we deduce $C(t, x) = - \p_x |\p_x|A$. Inserting into \eqref{u:int:eq:1}, we obtain \eqref{ltd:1a}. 
\end{proof}

In particular, evaluating equation \eqref{ltd:1a} at $y=0$, we find 
\begin{equation} \label{neumann_omega}
\pa_y \omega\vert_{y=0} = \pa_x |\pa_x| A 
\end{equation}
One can further remark that \eqref{vboa}-\eqref{vbob}-\eqref{neumann_omega} is equivalent to \eqref{vbo}. Indeed,  we have just seen that \eqref{vbo} implies the Neumann condition \eqref{neumann_omega}. Conversely, if $(\omega,A)$ satisfies  \eqref{vboa}-\eqref{vbob}-\eqref{neumann_omega}, then, still defining 
$u := \int_0^y \omega$, we obtain easily  \eqref{ltd:1a}, and evaluating this equation at $y=\infty$: 
$$ \p_t \mathcal{U}[\omega] + \p_x \mathcal{U}[\omega] + \p_x |\p_x| A = \mathcal{V}[\omega_x] $$
so that combining with  \eqref{vboa}, we find
$\p_t (\mathcal{U}[\omega]  - A) + \p_x ( \mathcal{U}[\omega]  - A)  = 0$. This implies \eqref{vboc} thanks to the compatibility condition on the initial data for $(\omega,A)$.

We now take formally the Laplace transform in time and Fourier transform in $x$ of system \eqref{vbo}. We find 
\begin{subequations} 
\begin{align} \label{eq:vort:a}
&(\lambda + i k + i k |k|) \hat{A}  = i k \mathcal{V}[ \hat{\omega}] + \hat{A}_{init}, \\ \label{eq:vort:b}
& (\lambda + i k V_s)  \hat{\omega} - ik U''_s \mathcal{V}_y[ \hat{\omega}] - \pa^2_y \hat{\omega} = i k U''_s(y) y  \hat{A} + \hat{\omega}_{init}, \\ \label{eq:vort:c}
& \mathcal{U}[\hat{\omega}]  = \hat{A}, 
\end{align}
\end{subequations}
involving 
\begin{align*}
 (\hat{\omega} , \hat{A}) & =   (\hat{\omega}(y), \hat{A}) :=  \mathcal{L}_{t \rightarrow \lambda} \mathcal{F}_{x \rightarrow k} (\omega(\cdot,y), A ), \\  
(\hat{\omega}_{init}, \hat{A}_{init}) & = (\hat{\omega}_{init}(y), A_{init}) := \mathcal{F}_{x \rightarrow k} (\omega_{init}, A_{init}) 
\end{align*}
We denote
\begin{equation} \label{defi:H}
 H := \Big\{ (\hat{\omega},\hat{A}) \in L^2(\R_+, (1+y)^3 dy) \times \R,  \Big\}, 
 \end{equation}
 equipped with the norm 
$$  \|(\hat{\omega},\hat{A})\|_H = \Big( \|(1+y)^3 \hat{\omega}\|_{L^2(\R_+)}^2 + |\hat{A}|^2 \Big)^{1/2}. $$
Our key result will be the following: 
 \begin{pro} \label{pro:main}
 There exists absolute positive constants $K_{\ast},  k_0$ and $M$, such that for all $|k| \ge k_0$,  all $\lambda$ with $\Re(\lambda) \ge K_{\ast} |k|^{2/3}$, and all data $(\hat{\omega}_{init}, \hat{A}_{init}) \in H$,  system \eqref{eq:vort:a}-\eqref{eq:vort:b}-\eqref{eq:vort:c} has a unique solution satisfying 
$$  \| (\hat{\omega} , \hat{A})\|_H  \lesssim |k|^{1/3} |\lambda|^{1/4} \| (\hat{\omega}_{init} , \hat{A}_{init})\|_H . $$
 \end{pro}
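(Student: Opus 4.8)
# Proof proposal for Proposition \ref{pro:main}

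\textbf{Overall strategy.} The plan is to treat \eqref{eq:vort:b}--\eqref{eq:vort:c} as a vorticity problem with $\hat A$ viewed as a given parameter, derive a resolvent-type estimate for $\hat\omega$ in terms of $\hat A$ and the data, then close the system by plugging this into the Benjamin–Ono relation \eqref{eq:vort:a} and using the smoothing it provides. Concretely, I would first establish a bound of the schematic form
\begin{equation} \label{plan:vort}
\|(1+y)^3 \hat\omega\|_{L^2} \lesssim |k|^{1/3}\,|\hat A| \;+\; (\text{controlled factor})\,\|(1+y)^3\hat\omega_{init}\|_{L^2},
\end{equation}
valid in the regime $|k|\ge k_0$, $\Re\lambda \ge K_\ast |k|^{2/3}$. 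Here the factor $|k|^{1/3}$ in front of $|\hat A|$ comes from the elliptic balance in \eqref{eq:vort:b}: the source $ikU_s''(y)y\,\hat A$ competes against $-\pa_y^2\hat\omega$ and the multiplication operator $\lambda + ikV_s$, so $\hat\omega$ inherits roughly $|k|/(\text{effective size of }\lambda+ikV_s)\sim |k|/|k|^{2/3}=|k|^{1/3}$ amplification — but only after the concavity-driven iteration controls the nonlocal term $-ikU_s'' \mathcal V_y[\hat\omega_x]$.

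\textbf{The vorticity estimate via the concavity iteration.} The genuinely hard step is proving \eqref{plan:vort}. The obstruction is the nonlocal zeroth-order term $-ikU_s''\mathcal V_y[\hat\omega]$ in \eqref{eq:vort:b}, which a naive energy estimate cannot absorb, together with the boundary vorticity creation encoded in the Neumann-type data (cf.\ \eqref{neumann_omega}). The plan is to run the iterative scheme adapted from \cite{GVMM}: split the equation into a ``main'' diffusive part with good sign structure coming from $U_s''<0$ — which provides, after multiplying by the right weight and integrating, a coercive contribution whose size is comparable to $\Re\lambda + |k|$ — and a ``remainder'' carrying the nonlocal term, which is handled perturbatively. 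Each iterate gains a small factor, and convergence of the series forces the relation $\Re\lambda \gtrsim |k|^{2/3}$: this is precisely the Gevrey-$\tfrac32$ threshold, since summing the geometric-type series requires the ratio of successive terms to be $\lesssim |k|/(\Re\lambda)^{3/2}\cdot(\dots)\ll 1$. The weight $(1+y)^3$ and the decay hypotheses \eqref{assume:Us:2}--\eqref{assume:Us:3} on $U_s'', U_s'''/U_s''$ are used throughout to make the commutators and the tail of $\mathcal V_y$ integrable; the normalization $U_s(\infty)=1$ and $U_s(0)=0$ fix the relevant boundary behaviors. I expect to need to carry a scale of weighted norms (possibly $(1+y)^2$ and $(1+y)^3$ simultaneously, as the theorem statement hints) to close the iteration without losing powers of $y$.

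\textbf{Closing the coupled system using Benjamin–Ono smoothing.} Once \eqref{plan:vort} holds, I substitute into \eqref{eq:vort:a}. The operator $\lambda + ik + ik|k|$ has modulus $\gtrsim |k|^2$ when $|k|$ is large (the dominant term is $ik|k|$), so
\begin{equation}
|\hat A| \;\le\; \frac{|k|\,|\mathcal V[\hat\omega]| + |\hat A_{init}|}{|\lambda + ik + ik|k||} \;\lesssim\; \frac{|k|\,\|(1+y)^3\hat\omega\|_{L^2} + |\hat A_{init}|}{|k|^2 + |\lambda|},
\end{equation}
where I use Cauchy–Schwarz with the weight $(1+y)^{-3}\in L^2(\R_+)$ to bound $|\mathcal V[\hat\omega]|\lesssim \|(1+y)^3\hat\omega\|_{L^2}$. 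Combining with \eqref{plan:vort} gives $|\hat A|(1 - C|k|\cdot|k|^{1/3}/(|k|^2+|\lambda|)) \lesssim (\dots)$; since $|k|^{4/3}/|k|^2 = |k|^{-2/3}\to 0$, for $|k|\ge k_0$ large enough the coefficient on the left is bounded below by $\tfrac12$, so $|\hat A|$ is absorbed and controlled by the data. Feeding this back into \eqref{plan:vort} controls $\hat\omega$ as well, and tracking the accumulated powers of $|k|$ and $|\lambda|$ yields the stated bound $\|(\hat\omega,\hat A)\|_H \lesssim |k|^{1/3}|\lambda|^{1/4}\|(\hat\omega_{init},\hat A_{init})\|_H$ — the $|\lambda|^{1/4}$ presumably being the price paid in one of the interpolation steps of the iteration for the high-$|\lambda|$ part. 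Uniqueness follows from the same a priori estimate applied to the difference of two solutions with zero data; existence follows by a standard Galerkin/continuity argument in $y$ for the (now well-conditioned) resolvent problem, or by noting the iteration scheme itself is contractive and produces the solution. The main obstacle remains Step 2, the concavity iteration and its convergence at the $|k|^{2/3}$ threshold.
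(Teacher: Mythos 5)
The high-level plan is sound and matches the paper's skeleton (estimate $\hat\omega$ given $\hat A$ via the concavity iteration, then close via the Benjamin–Ono relation \eqref{eq:vort:a}), but the closing step contains a genuine gap. You assert that $|\lambda + ik + ik|k|| \gtrsim |k|^2$ whenever $|k|$ is large. This is false. The constraint $\Re\lambda \ge K_\ast|k|^{2/3}$ does not control $\Im\lambda$, and the Laplace contour sweeps all imaginary parts. Taking for instance $\lambda = K_\ast|k|^{2/3} - ik|k|$ (which satisfies the hypothesis) gives $\lambda + ik + ik|k| = K_\ast|k|^{2/3} + ik$, of modulus $\sim |k| \ll |k|^2$. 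In general the modulus is only bounded below by $\Re\lambda \gtrsim |k|^{2/3}$. With that weaker lower bound, your self-improving estimate $|\hat A|(1 - C|k|^{4/3}/|k|^2)\lesssim(\dots)$ becomes $|\hat A|(1-C|k|^{4/3}/|k|^{2/3})$, i.e., the coefficient in front of $|\hat A|$ on the right is $\sim |k|^{2/3}\gg 1$, and the absorption argument collapses.

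This is exactly why the paper splits the $(\lambda,k)$ plane. When $|\lambda + ik|k|| \ge \tfrac12|k|^2$ your argument works (this is the paper's Case~1). But when $|\lambda + ik|k|| \le \tfrac12|k|^2$ — which is precisely the resonant region near $\Im\lambda = -k|k|$ — a crude bound on $|\hat A|$ by division is hopeless, and the paper instead pairs \eqref{eq:vort:a} with the equation satisfied by $f_H = \hat A\,\omega_H^{(0)}$, extracts the exact cancellation $\Re\big(ik\mathcal V[f_H]\overline{\hat A} - \langle iky\hat A, f_H\rangle\big) = 0$, and uses the concavity weight $1/(-U_{s,k}'')$ to close an energy estimate on the pair $(\hat A, f_H)$ simultaneously. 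The key structural point you are missing is that the dangerous $O(|k|)$ term $ik\hat A\,\mathcal V[\omega_H^{(0)}]$ in \eqref{eq:vort:a} cannot be treated as a small perturbation in Case~2; it must be cancelled against the corresponding source $ikU_s'' y\hat A$ in the $\omega_H^{(0)}$ equation. Without this cancellation, no amount of gaining $|k|^{-1/3}$ in the vorticity estimate can offset the loss of two derivatives in $\pa_x|\pa_x|A$ when the Benjamin–Ono symbol degenerates.

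A secondary, more minor point: in bounding $\mathcal V[\hat\omega]$ you use the crude $|\mathcal V[\hat\omega]| \lesssim \|(1+y)^3\hat\omega\|_{L^2}$, but the paper works with the sharper decomposition-dependent estimates $\mathcal V[|\Omega_{BL}|]\lesssim|\lambda|^{-1/2}$, $\mathcal V[|\omega_H^{(0)}|]\lesssim|k|/\Re\lambda$, etc.\ (Proposition~\ref{propn:s}). These sharper bounds are needed to make the powers of $|k|$ and $|\lambda|$ come out right; the crude $L^2$-weighted bound loses the factor $|\lambda|^{-1/2}$ from $\Omega_{BL}$, which is essential for closing even Case~1 at the advertised rate.
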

Most of the  paper is devoted to the proof of this proposition.  The main steps of this proof, involving an iteration scheme inspired from \cite{GVMM},  will be given in the next two paragraphs.  Well-posedness of the systems involved at each step of the iteration is shown in Sections \ref{sec:OmegaBL} and \ref{sec:OmegaH}.  Checking the convergence of the iteration is  done in Section \ref{sec:convergence}. Eventually, we will explain in Section \ref{sec:proof:thm} how to complete Proposition \ref{pro:main} to obtain Theorem \ref{thm:main}.

\subsection{Hydrostatic \& Boundary Layer Iteration} \label{subsec:iteration}

Here, as well as in Sections \ref{sec:OmegaBL}, \ref{sec:OmegaH} and \ref{sec:convergence}, we will focus on the system \eqref{eq:vort:a}-\eqref{eq:vort:b}-\eqref{eq:vort:c}.   For a significant part of the analysis, we view $\hat{A}$ as arbitrary and given, and we study in isolation the system \eqref{eq:vort:b} -- \eqref{eq:vort:c} for $\hat{\omega}$. To emphasize this point of view, we will write $\hat{\omega} = \hat{\omega}\big[\hat{A}\big]$. We split 
\begin{align} \label{omega:structure:1}
\hat{\omega}\big[\hat{A}\big] = \hat{A} \, \bar{\omega} +  \omega_{inhom},
\end{align}
 where 
\begin{equation} \label{eq:vort:bar}
\begin{aligned}
& (\lambda + i k V_s) \bar{\omega}   - ik U''_s \mathcal{V}_y[\bar{\omega}]  - \pa^2_y \bar{\omega}  = i k U''_s(y) y, \\
& \mathcal{U}[ \bar{\omega} ]  = 1, 
\end{aligned}
\end{equation}
and 
\begin{equation} \label{eq:vort:inh}
\begin{aligned}
& (\lambda + i k V_s) \omega_{inhom}  - ik U''_s \mathcal{V}_y[ \omega_{inhom}] - \pa^2_y \omega_{inhom} =  \hat{\omega}_{init}, \\
&\mathcal{U}[ \omega_{inhom}]  = 0.
\end{aligned}
\end{equation}

\subsubsection{Iteration for $\overline{\omega}$}

In this section, we explain the main strategy for the construction of  the normalized quantity $\bar{\omega}$, solution of \eqref{eq:vort:bar}. This construction will work for $\Re(\lambda)$ large enough, that is for  $\Re(\lambda) \ge K_\ast |k|^{2/3}$, $|k| \ge k_0$, for some absolute constants $K_\ast, k_0$.  We plan to construct $\bar{\omega}$ in two pieces. The first one  accounts for the (singular in $k$) forcing $ik U_s''(y) y$, but has a homogeneous Neumann condition instead of mean $1$. It is called the hydrostatic part and  denoted $\omega_H$, as stability estimates for this part take their inspiration from works on hydrostatic Euler: see \cite{MR1690189,MR2898740}, as well as \cite{MR1761422}.  The second piece  has (essentially) no forcing, but  corrects the mean. This piece is  called the boundary layer part, and denoted $\omega_{BL}$. Actually, as will be seen below, we will not be able to correct the mean condition at once without creating some error source term. This will require in turn to add an hydrostatic term, which will create  an error in the mean, and so on. Hence, both the hydrostatic and boundary layer parts will be given as infinite sums. This idea of solving a fluid equation through an iteration has revealed fruitful in several recent papers, notably around the analysis of Orr-Sommerfeld equations: see the pioneering work \cite{GGN2014}, as well as \cite{MR3855356}.  Our main source of inspiration here is \cite{GVMM}. 
 More precisely, the idea is to  construct $\bar{\omega}$ under the form:  
\begin{align} 
\overline{\omega} = & \omega_{H} + \omega_{BL} \\ \n
= & \sum_{j = 0}^{\infty}  \omega_{H}^{(j)}  + \sum_{j = 0}^{\infty} \omega_{BL}^{(j)} \\ \n
= & \omega_H^{(0)}  + \omega_{BL}^{(0)} + \sum_{j = 1}^{\infty} \omega_H^{(j)} + \sum_{j = 1}^{\infty} \omega_{BL}^{(j)} \\ \label{exp:1}
= & \omega_{H}^{(0)} + \omega_{BL}^{(0)} + \omega_{H}^{(tail)} + \omega_{BL}^{(tail)}. 
\end{align}
where  the ``tail" of both the expansions will be shown   to be higher order. We delineate here the various systems satisfied formally by  $\omega_{H}^{(j)}$ and $ \omega_{BL}^{(j)}$, $j \ge 0$. 

The first idea is to initialize the construction by solving the Neumann problem: 
\begin{equation} \label{eq:vort:bar:a}
\begin{aligned}
& (\lambda + i k V_s) \omega_H^{(0)}   - ik U''_s \mathcal{V}_y[ \omega_H^{(0)}]   - \pa^2_y \omega_H^{(0)}  = i k U''_s(y) y, \\
& \p_y \omega_H^{(0)}|_{y = 0} = 0. 
\end{aligned}
\end{equation}
This system will be shown to be well-posed for $\Re(\lambda)$ large enough in Section \ref{sec:OmegaH}.  

We then initialize the boundary layer construction by solving the system: 
\begin{equation} \label{eq:vort:bar:BL:A}
\begin{aligned}
& (\lambda + i k V_s) \omega_{BL}^{(0)}   - \pa^2_y \omega_{BL}^{(0)}  = 0, \\
& \mathcal{U}[ \omega_{BL}^{(0)}] = 1 - \mathcal{U}[ \omega_H^{(0)}], 
\end{aligned}
\end{equation}
Well-posedness of this system will be shown in Section \ref{sec:OmegaBL}. Note that we got rid at this step of the stretching term.  This creates an error term  
$- i k U_s'' \mathcal{V}_y[ \omega_{BL}^{(0)} ]$, which will be corrected by the next hydrostatic term in the expansion. 
With this in mind, we  define our $j$'th order hydrostatic term in the expansion \eqref{exp:1} $(j \ge 1$) as the solution of 
\begin{equation} \label{eq:vort:bar:aj}
\begin{aligned}
& (\lambda + i k V_s) \omega_H^{(j)}   - ik U''_s \mathcal{V}_y[ \omega_H^{(j)}]   - \pa^2_y \omega_H^{(j)}   =   i k U_s'' \mathcal{V}_y[ \omega_{BL}^{(j-1)} ] \\
& \p_y \omega_H^{(j)}|_{y = 0} = 0, 
\end{aligned}
\end{equation}
and for the $j$'th order boundary layer term $(j \ge 1)$: 
\begin{equation} \label{eq:vort:bar:BL:Aj}
\begin{aligned}
& (\lambda + ik V_s)\omega_{BL}^{(j)}  - \pa^2_y \omega_{BL}^{(j)}  = 0, \\
&\mathcal{U}[ \omega_{BL}^{(j)}] =  - \mathcal{U}[ \omega_H^{(j)}], 
\end{aligned}
\end{equation}
We stress that this sequence of profiles $(\omega_H^{(j)},\omega_{BL}^{(j)})$ can be expressed in terms of a sequence of parameters  and of two  fixed functions. To see this, we introduce
\begin{align} \label{hg1}
\Lambda_j :=  &-\mathcal{U}[\omega_H^{(j)}]  \\ \label{def:L00}
\lambda_j := & \begin{cases} 1 +  \Lambda_0, \qquad j = 0 \\ \Lambda_j, \qquad j \ge 1 \end{cases}
\end{align}
as well as the solution $\Omega_{BL}$ of  
\begin{equation} \label{gtgt2}
\begin{aligned}
& (\lambda + ik V_s)\Omega_{BL}  - \pa_y^2 \Omega_{BL}  = 0, \\
& \mathcal{U}[ \Omega_{BL}] =1.
\end{aligned}
\end{equation}
and the solution $F_H$ of 
\begin{equation} \label{gtgt}
\begin{aligned}
& (\lambda + i k V_s) F_H   - ik U''_s \mathcal{V}_y[ F_H]   - \pa^2_y F_H  =  U_s'' \mathcal{V}_y[\Omega_{BL}], \\
& \p_y F_H|_{y = 0} = 0, 
\end{aligned}
\end{equation}
These two systems will be shown to have solutions for $\Re(\lambda) \ge K_\ast |k|^{2/3}$ in Sections \ref{sec:OmegaBL} and \ref{sec:OmegaH}.   Clearly, it follows that 
\begin{align} \label{hg3}
\omega^{(j)}_{BL} = &\lambda_j \Omega_{BL}, \qquad j \ge 0, \\ \label{hg4}
\omega_H^{(j)} = & ik \lambda_{j-1} F_H, \qquad j \ge 1.
\end{align}
and inserting this relation into the  formulae for $\lambda_j$, we find 
\begin{align} \label{hg5}
 \lambda_{j+1} & = (- i k\mathcal{U}[F_H]) \lambda_j, \quad \text{that is } \: \lambda_j = (- i k\mathcal{U}[F_H])^j \lambda_0, \quad j \ge 0. 
 \end{align}
From these relations, we see that all profiles $(\omega_H^{(j)},\omega_{BL}^{(j)})$ only depend on $\omega_H^{(0)}$, $F_H$ and $\Omega_{BL}$. Moreover, as $\lambda_j$ obeys a geometric progression, the convergence of the series will depend on whether the common ratio $(- i k\mathcal{U}[F_H])$  is less  than $1$, which will be examined in Section \ref{sec:convergence}. 


\subsubsection{Iteration for $\omega_{inhom}$}

We are led to perform a similar iteration to construct $\omega_{inhom}$, the solution to \eqref{eq:vort:inh}. We decompose as follows: 
\begin{align} \n
\omega_{inhom} = & \omega_{IH} + \omega_{IB} \\ \n
= & \omega_{IH}^{(0)} + \omega_{IB}^{(0)} + \sum_{j = 1}^{\infty} \omega_{IH}^{(j)} + \sum_{j = 1}^{\infty} \omega_{IB}^{(j)} \\ \label{abcded}
= & \omega_{IH}^{(0)} + \omega_{IB}^{(0)} + \omega_{IH}^{(tail)}  + \omega_{IB}^{(tail)}
\end{align}
We will now describe the quantities appearing above. 

At leading order, similarly to the previous paragraph, we want $\omega^{(0)}_{IH}$  to solve
\begin{equation} \label{eq:vort:IH}
\begin{aligned}
& (\lambda + i k V_s) \omega^{(0)}_{IH}  - ik U''_s \mathcal{V}_y[ \omega^{(0)}_{IH}] - \pa^2_y \omega^{(0)}_{IH} =  \omega_{init}, \\
& \p_y \omega_{IH}|_{y = 0}  = 0,
\end{aligned}
\end{equation}
(see Section \ref{sec:OmegaH} for well-posedness) and  $\omega^{(0)}_{IB}$ to solve
\begin{equation} \label{eq:vort:bar:BL:ABAB}
\begin{aligned}
& (\lambda + i k V_s) \omega_{IB}^{(0)}   - \pa^2_y \omega_{IB}^{(0)}  = 0, \\
& \mathcal{U}[ \omega_{IB}^{(0)}] =  - \mathcal{U}[ \omega_{IH}^{(0)}].
\end{aligned}
\end{equation}
Again,  this construction of $\omega_{IB}^{(0)}$ creates an error $- i k U''_s \mathcal{V}_y[\omega_{IB}^{(0)}]$.   

We now define the higher order ``tail" quantities. We define for $j \ge 1$,
\begin{equation} \label{cdefIH}
\begin{aligned}
& (\lambda + i k V_s) \omega^{(j)}_{IH}  - ik U''_s \mathcal{V}_y[ \omega^{(j)}_{IH}] - \pa^2_y \omega^{(j)}_{IH} =   i k U''_s \mathcal{V}_y[\omega_{IB}^{(j-1)}], \qquad j \ge 1 \\
& \p_y \omega^{(j)}_{IH}|_{y = 0}  = 0,
\end{aligned}
\end{equation}
and
\begin{equation} \label{uyht}
\begin{aligned}
& (\lambda + i k V_s) \omega_{IB}^{(j)}   - \pa^2_y \omega_{IB}^{(j)}  = 0, \\
& \mathcal{U}[ \omega_{IB}^{(j)}] =  - \mathcal{U}[ \omega_{IH}^{(j)}].
\end{aligned}
\end{equation}
We once again propose 
\begin{align}
\widetilde{\lambda}_j := - \mathcal{U}[\omega^{(j)}_{IH}],  \qquad j \ge 0.
\end{align}
Given these definitions, we have 
\begin{align}
\omega^{(j)}_{IB} = & \widetilde{\lambda}_j \Omega_{BL}, \qquad j \ge 0 \\
 \omega^{(j)}_{IH} = & ik \widetilde{\lambda}_{j-1} F_H, \qquad j \ge 1
\end{align}
where again, 
\begin{equation} \label{tildelambdageometric}
\tilde{\lambda}_{j+1} = (-i k \mathcal{U}[F_H]) \tilde{\lambda}_j, \quad \text{that is } \: \widetilde{\lambda}_j = (-i k \mathcal{U}[F_H])^j \tilde{\lambda}_0, \quad j \ge 0.
\end{equation}

\subsection{Gevrey Stability Estimate}

We will be working under the hypotheses 
\begin{align} \label{k:hyp}
|k| \ge & k_0, \\ \label{lambda:hyp}
\Re(\lambda) \ge & K_{\ast} |k|^{\frac23}.   
\end{align}
where $K_{\ast}, k_0 >> 1$ relative to universal constants. We introduce the following weight, which will be used throughout our analysis:
\begin{equation} \label{Usk}
 U_{s,k}''  = U''_s - |k|^{-2/3} (1+y)^{-6}
 \end{equation}
We first state the following elementary properties of the weight $U_{s,k}''$:
\begin{lem} The weight \eqref{Usk} satisfies the following upper and lower bounds:
\begin{align} \label{lowerupper1}
(1 + y)^6 \lesssim \frac{1}{-U_{s,k}''} \lesssim |k|^{\frac23} (1 + y)^6
\end{align}
\end{lem}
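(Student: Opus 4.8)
The plan is to establish the two-sided bound in \eqref{lowerupper1} directly from the definition \eqref{Usk}, using the structural hypotheses on $U_s$ in \eqref{assume:Us}, together with the restriction $|k| \ge k_0$ coming from \eqref{k:hyp}. Recall that $U_{s,k}'' = U_s'' - |k|^{-2/3}(1+y)^{-6}$, and since $U_s'' < 0$ by \eqref{assume:Us:1} and $|k|^{-2/3}(1+y)^{-6} > 0$, we have $-U_{s,k}'' = -U_s'' + |k|^{-2/3}(1+y)^{-6} > 0$, so the reciprocal makes sense and is positive; the claim is then a comparison of $-U_{s,k}''$ with $(1+y)^{-6}$ up to factors of $|k|^{2/3}$.

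For the \emph{lower} bound on $\dfrac{1}{-U_{s,k}''}$ (equivalently an \emph{upper} bound on $-U_{s,k}''$), I would write $-U_{s,k}'' = |U_s''| + |k|^{-2/3}(1+y)^{-6}$. By \eqref{assume:Us:2}, $|U_s''| \lesssim (1+y)^{-6}$, and trivially $|k|^{-2/3} \le k_0^{-2/3} \lesssim 1$ for $|k| \ge k_0$, so both terms are $\lesssim (1+y)^{-6}$, giving $-U_{s,k}'' \lesssim (1+y)^{-6}$, i.e. $(1+y)^6 \lesssim \dfrac{1}{-U_{s,k}''}$. For the \emph{upper} bound on $\dfrac{1}{-U_{s,k}''}$ (equivalently a \emph{lower} bound on $-U_{s,k}''$), I would simply discard the nonnegative term $|U_s''|$ and use $-U_{s,k}'' \ge |k|^{-2/3}(1+y)^{-6}$, hence $\dfrac{1}{-U_{s,k}''} \le |k|^{2/3}(1+y)^6$. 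This second half is where the regularizing modification by $|k|^{-2/3}(1+y)^{-6}$ earns its keep: without it, $\dfrac{1}{-U_s''}$ could blow up faster than any polynomial in $y$ (only an upper bound $|U_s''| \lesssim (1+y)^{-6}$ is assumed, not a matching lower bound), so the truncation provides the uniform-in-$y$ lower barrier $|k|^{-2/3}(1+y)^{-6}$ on $-U_{s,k}''$.

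I do not expect any genuine obstacle here — the lemma is a bookkeeping step recording exactly how the weight $U_{s,k}''$ is designed to behave, and both inequalities follow in one line each from \eqref{assume:Us:1}, \eqref{assume:Us:2} and $|k| \ge k_0$. The only mild subtlety worth a remark is that the implied constants in \eqref{lowerupper1} depend on the constant in \eqref{assume:Us:2} and on $k_0$, but these are fixed once and for all, so they are "universal" in the sense used in the paper. (Note also that hypothesis \eqref{assume:Us:3} is not needed for this particular estimate; it will be used elsewhere.) Thus the proof amounts to: for $|k| \ge k_0$, bound $-U_{s,k}'' = |U_s''| + |k|^{-2/3}(1+y)^{-6}$ above by $C(1+y)^{-6}$ using \eqref{assume:Us:2}, and below by $|k|^{-2/3}(1+y)^{-6}$ by positivity of $|U_s''|$; take reciprocals.
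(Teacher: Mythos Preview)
Your proof is correct and follows essentially the same approach as the paper: drop the nonnegative term $|U_s''|$ to get the upper bound on $\frac{1}{-U_{s,k}''}$, and use the decay assumption \eqref{assume:Us:2} together with $|k|^{-2/3} \lesssim 1$ to bound $-U_{s,k}''$ above by a constant times $(1+y)^{-6}$ for the lower bound. If anything, your argument for the lower bound is marginally cleaner than the paper's, which routes through the inequality $\frac{1}{a+b} \ge \frac{1}{2\max\{a,b\}}$ before invoking \eqref{assume:Us:2}.
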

\begin{proof} For the upper bound, we have 
\begin{align*}
\frac{1}{-U_{s,k}''} = \frac{1}{- U_s'' + |k|^{-\frac23}(1 + y)^{-6}} \le \frac{1}{ |k|^{-\frac23}(1 + y)^{-6}} = |k|^{\frac23} (1 + y)^{6}.
\end{align*}
For the lower bound, we have the general elementary inequality for $a, b \ge 0$:
\begin{align*}
\frac{1}{a+b} \ge \frac{1}{2 \max \{a, b \}}. 
\end{align*}
Given this, the lower bound will follow from the following upper bound 
\begin{align*}
\max\{- U_s'', |k|^{-\frac{2}{3}}(1 + y)^{-6} \} \lesssim (1+y)^{-6}, 
\end{align*}
upon invoking our decay assumption, \eqref{assume:Us:2}. 
\end{proof}

As a result of the formal analysis of the two previous paragraphs, and of the rigorous analysis of Sections \ref{sec:OmegaBL} to \ref{sec:convergence}, we  state our main proposition on the structure of  $\hat{\omega}\big[\hat{A}\big]$ from \eqref{omega:structure:1}. 
\begin{pro} \label{propn:s} Under  \eqref{k:hyp}-\eqref{lambda:hyp}, for any constant $\hat{A}$, there is a unique solution $\hat{\omega}\big[\hat{A}\big]$ of \eqref{eq:vort:b}-\eqref{eq:vort:c} that can be decomposed in the following manner: 
\begin{align} \label{structure:cond:om}
\hat{\omega}\big[\hat{A}\big] = \hat{A} \lambda_{\ast} \Omega_{BL} + \hat{A} \omega_H^{(0)} + \hat{A}  \omega_H^{(tail)} + \omega_{inhom},
\end{align}
where  $\lambda_{\ast} \in \mathbb{C}$,  and where the functions $\Omega_{BL}, \omega_H^{(0)},  \omega_H^{(tail)}, \omega_{inhom} \in L^2((1+y)^3 \R_+)$  were introduced in the previous paragraph. Moreover, they satisfy the following bounds: 
\begin{subequations}
\begin{align} \label{s:a}
|\lambda_{\ast}| \lesssim &  1 +  \frac{|k|}{\Re(\lambda)} \\ \label{s:b}
\mathcal{V}[|\Omega_{BL}|]  \lesssim & \frac{1}{|\lambda|^{\frac12}}, \\ \label{s:c}
\mathcal{V}[|\omega_H^{(0)}| ] \lesssim & \frac{|k|}{\Re(\lambda)} \\  \label{s:d}
\mathcal{V}[|\omega_H^{(tail)}|] \lesssim & \frac{|k|}{\Re(\lambda) |\lambda|^{\frac12}} \Big(1 + \frac{|k|}{\Re(\lambda)}\Big), \\  \label{s:e}
\mathcal{V}[|\omega_{inhom}|] \lesssim & \frac{1}{\Re(\lambda)} |k|^{1/3} \| (1+y)^3 \omega_{init} \|_{L^2_y}.
\end{align}
\end{subequations}
\end{pro}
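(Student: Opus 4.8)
\textbf{Proof plan for Proposition \ref{propn:s}.}

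The plan is to assemble the decomposition \eqref{structure:cond:om} out of the pieces constructed in Section \ref{subsec:iteration} and then verify each of the five bounds using the individual estimates for the building blocks $\Omega_{BL}$, $\omega_H^{(0)}$, $F_H$, $\omega_{IH}^{(0)}$, $\omega_{IB}^{(0)}$ established in Sections \ref{sec:OmegaBL}--\ref{sec:convergence}. First I would recall from \eqref{omega:structure:1} the splitting $\hat{\omega}[\hat{A}] = \hat{A}\,\bar{\omega} + \omega_{inhom}$, and from \eqref{exp:1}, \eqref{hg3}--\eqref{hg5} the representation $\bar{\omega} = \omega_H^{(0)} + \sum_{j\ge0}\lambda_j \Omega_{BL} + \sum_{j\ge1} ik\lambda_{j-1}F_H$. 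Setting $\lambda_\ast := \sum_{j\ge0}\lambda_j = \lambda_0 \sum_{j\ge0}(-ik\,\mathcal{U}[F_H])^j$ and $\omega_H^{(tail)} := \sum_{j\ge1}\omega_H^{(j)} = ik F_H \sum_{j\ge0}\lambda_j = ik\lambda_\ast F_H$, we get exactly $\bar{\omega} = \lambda_\ast\Omega_{BL} + \omega_H^{(0)} + \omega_H^{(tail)}$, hence \eqref{structure:cond:om}. The geometric series converges provided $|k\,\mathcal{U}[F_H]| < \tfrac12$ (say), which is precisely the content proved in Section \ref{sec:convergence} under \eqref{k:hyp}--\eqref{lambda:hyp}; this convergence also gives uniqueness, since any solution of \eqref{eq:vort:b}--\eqref{eq:vort:c} decomposes via \eqref{omega:structure:1} and $\bar{\omega}$ is uniquely determined by the (uniquely solvable) systems for $\omega_H^{(0)}$, $\Omega_{BL}$, $F_H$ together with the recursion fixing the $\lambda_j$.

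Next I would turn to the quantitative bounds. For \eqref{s:a}: from \eqref{def:L00} and \eqref{eq:vort:bar:BL:A}, $\lambda_0 = 1 + \Lambda_0 = 1 - \mathcal{U}[\omega_H^{(0)}]$, and the estimate $|\mathcal{U}[\omega_H^{(0)}]| \lesssim \mathcal{V}[|\omega_H^{(0)}|] \lesssim |k|/\Re(\lambda)$ from \eqref{s:c} gives $|\lambda_0| \lesssim 1 + |k|/\Re(\lambda)$; combined with $\sum_{j\ge0}|k\,\mathcal{U}[F_H]|^j \lesssim 1$ this yields \eqref{s:a}. The bounds \eqref{s:b} and \eqref{s:c} are the direct weighted estimates on $\Omega_{BL}$ (solving \eqref{gtgt2}) and $\omega_H^{(0)}$ (solving \eqref{eq:vort:bar:a}), which I take as established in Sections \ref{sec:OmegaBL} and \ref{sec:OmegaH}; here $\mathcal{V}[\,\cdot\,]$ is the double-integral operator from \eqref{defn:cal:op}, so $\mathcal{V}[|f|] \le \|(1+y)^3 f\|_{L^2_y}\,\|(1+y)^{-3}\|_{L^2_y}$ up to the extra $y$-integration, i.e. these reduce to the weighted $L^2$ bounds on the profiles. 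For \eqref{s:d}, write $\omega_H^{(tail)} = ik\lambda_\ast F_H$, so $\mathcal{V}[|\omega_H^{(tail)}|] \le |k|\,|\lambda_\ast|\,\mathcal{V}[|F_H|]$; using \eqref{s:a} for $|\lambda_\ast|$ and the bound $\mathcal{V}[|F_H|] \lesssim 1/(\Re(\lambda)|\lambda|^{1/2})$ — which comes from feeding the $|\lambda|^{-1/2}$-smallness of $\mathcal{V}[\Omega_{BL}]$ (the source of \eqref{gtgt}) through the hydrostatic solve, gaining a factor $1/\Re(\lambda)$ — produces exactly the right-hand side of \eqref{s:d}. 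Finally \eqref{s:e} follows the same pattern applied to the inhomogeneous tower \eqref{eq:vort:IH}--\eqref{tildelambdageometric}: $\omega_{inhom} = \omega_{IH}^{(0)} + \tilde\lambda_\ast \Omega_{BL} + ik\tilde\lambda_\ast F_H$ with $\tilde\lambda_\ast = \tilde\lambda_0\sum_{j\ge0}(-ik\mathcal{U}[F_H])^j$ and $\tilde\lambda_0 = -\mathcal{U}[\omega_{IH}^{(0)}]$; the forcing $\omega_{init}$ in \eqref{eq:vort:IH} brings in the factor $|k|^{1/3}\|(1+y)^3\omega_{init}\|_{L^2_y}$ from the weighted elliptic estimate in Section \ref{sec:OmegaH}, and the remaining $1/\Re(\lambda)$ is the same gain as before. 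Summing the (geometrically convergent) tails does not worsen the order, again by Section \ref{sec:convergence}.

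The main obstacle is not the bookkeeping above — which is essentially an exercise in summing geometric series and combining the bounds — but rather the three ingredients imported from later sections: (i) the well-posedness and weighted resolvent estimates for the hydrostatic operator $\omega \mapsto (\lambda + ikV_s)\omega - ikU_s''\mathcal{V}_y[\omega] - \pa_y^2\omega$ with Neumann data, which is where the concavity assumption $U_s'' < 0$ and the regularized weight $U_{s,k}''$ of \eqref{Usk} enter decisively (the sign of $U_s''$ makes the nonlocal stretching term $-ikU_s''\mathcal{V}_y[\cdot]$ dissipative in a suitably weighted energy); and (ii) the boundary-layer estimate $\mathcal{V}[|\Omega_{BL}|] \lesssim |\lambda|^{-1/2}$, which reflects the $\sqrt{\lambda}$-thin Airy-type boundary layer generated by correcting the mean; and (iii) the smallness $|k\,\mathcal{U}[F_H]| \ll 1$ driving the convergence of the iteration, which is exactly the place where the Gevrey-$3/2$ threshold $\Re(\lambda) \gtrsim |k|^{2/3}$ is forced — each application of $F_H$ costs a factor $|k|$ but gains $1/\Re(\lambda)$ together with the $|\lambda|^{-1/2}$ from $\Omega_{BL}$, and balancing $|k|\cdot\Re(\lambda)^{-1}\cdot|\lambda|^{-1/2} \ll 1$ against $\Re(\lambda) \sim |k|^{2/3}$ is what closes. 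For the proof of the present proposition, however, all three are cited as already-proved inputs, so the argument here is the synthesis step.
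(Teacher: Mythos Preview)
Your proposal is correct and follows essentially the same route as the paper: you sum the geometric series in $(-ik\,\mathcal{U}[F_H])$ to define $\lambda_\ast = \lambda_0/(1+ik\,\mathcal{U}[F_H])$ and $\omega_H^{(tail)} = ik\lambda_\ast F_H$, then read off \eqref{s:a}--\eqref{s:d} from the building-block estimates of Sections \ref{sec:OmegaBL}--\ref{sec:OmegaH} (Corollary \ref{corXi0} and Corollary \ref{cor_FH}), and handle \eqref{s:e} via the analogous inhomogeneous tower, exactly as in Section \ref{sec:convergence}. One small slip: the inequality $|\mathcal{U}[\omega_H^{(0)}]| \lesssim \mathcal{V}[|\omega_H^{(0)}|]$ is not true in general (these are $\int|f|$ versus $\int y|f|$), but this is harmless since Corollary \ref{cor_FH} bounds both by $|k|/\Re(\lambda)$ directly from the weighted $L^2$ norm; similarly, the $|k|^{1/3}$ in \eqref{s:e} comes specifically from the upper bound $(-U_{s,k}'')^{-1/2} \le |k|^{1/3}(1+y)^3$ in \eqref{lowerupper1}, applied to pass from $\|(-U_{s,k}'')^{-1/2}\hat\omega_{init}\|_{L^2}$ to $\|(1+y)^3\hat\omega_{init}\|_{L^2}$.
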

This proposition will be proven in Section \ref{sec:convergence}. Given this structural decomposition of $\hat{\omega} = \omega\big[\hat{A}\big]$, \eqref{structure:cond:om}, we can prove 
\begin{pro} Under  \eqref{k:hyp}-\eqref{lambda:hyp}, the equation \eqref{eq:vort:a}, where $\hat{\omega} = \hat{\omega}\big[\hat{A}\big]$ was  introduced in Proposition \ref{propn:s} has a unique solution $\hat{A}$ satisfying: 
\begin{align} \label{estimA}
|\hat{A}|^2 \lesssim  \| (1+y)^3 \omega_{init} \|_{L^2_y}^2 + \frac{1}{|k|^{\frac43}} |\hat{A} _{init}|^2.
\end{align}
\end{pro}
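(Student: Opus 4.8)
The plan is to solve the scalar equation \eqref{eq:vort:a} for $\hat{A}$ by substituting the structural decomposition \eqref{structure:cond:om} into the right-hand side term $ik \mathcal{V}[\hat\omega]$. Since $\hat\omega\big[\hat{A}\big]$ depends \emph{linearly} on $\hat{A}$ (the first three pieces carry a factor $\hat{A}$, while $\omega_{inhom}$ is independent of $\hat{A}$), we get
\begin{equation*}
\mathcal{V}[\hat\omega] = \hat{A}\,\mathcal{V}\big[\lambda_\ast \Omega_{BL} + \omega_H^{(0)} + \omega_H^{(tail)}\big] + \mathcal{V}[\omega_{inhom}],
\end{equation*}
so that \eqref{eq:vort:a} becomes a linear scalar equation $\big(\lambda + ik + ik|k| - ik\, \mathcal{V}[\lambda_\ast\Omega_{BL} + \omega_H^{(0)} + \omega_H^{(tail)}]\big)\hat{A} = ik\,\mathcal{V}[\omega_{inhom}] + \hat{A}_{init}$. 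The first step is therefore to show that the bracketed coefficient does not vanish and in fact has modulus $\gtrsim |k|^{4/3}$; then $\hat{A}$ exists, is unique, and is bounded by dividing.

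First I would estimate the ``perturbation'' $ik\,\mathcal{V}[\lambda_\ast \Omega_{BL} + \omega_H^{(0)} + \omega_H^{(tail)}]$ using the bounds \eqref{s:a}--\eqref{s:d} from Proposition \ref{propn:s}. Combining \eqref{s:a} and \eqref{s:b} gives $|k|\,|\lambda_\ast|\,\mathcal{V}[|\Omega_{BL}|] \lesssim |k|\,|\lambda|^{-1/2}(1 + |k|/\Re(\lambda))$; from \eqref{s:c} we get $|k|\,\mathcal{V}[|\omega_H^{(0)}|] \lesssim |k|^2/\Re(\lambda)$; and \eqref{s:d} gives $|k|\,\mathcal{V}[|\omega_H^{(tail)}|] \lesssim (|k|^2/(\Re(\lambda)|\lambda|^{1/2}))(1 + |k|/\Re(\lambda))$. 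Under \eqref{k:hyp}--\eqref{lambda:hyp}, i.e. $\Re(\lambda) \ge K_\ast |k|^{2/3}$ with $K_\ast, k_0$ large, each of these is $o(|k|^{4/3})$ (for instance $|k|^2/\Re(\lambda) \le |k|^{4/3}/K_\ast$), hence absorbed by the dominant term $ik|k|\hat{A}$, whose coefficient has modulus $|k|^2 \ge |k|^{4/3}$ (actually one must be slightly careful: the real part $\Re(\lambda) \geq K_\ast |k|^{2/3}$ of $\lambda + ik + ik|k|$ already forces $|\lambda + ik + ik|k|| \gtrsim |k|^{2/3}$, but combining with the imaginary part $\Im(\lambda) + k + k|k| $ which is of size $\sim |k|^2$ for large $|k|$ unless cancellation occurs — and cancellation cannot occur since $\Im(\lambda)$ is free, so one should split into the region $|\Im(\lambda) + k + k|k|| \ge |k|^2/2$, where the coefficient is $\gtrsim |k|^2$, and its complement, where necessarily $|\lambda| \sim |k|^2$ so that $\Re(\lambda) \ge K_\ast |k|^{2/3}$ still needs examination — here $|k|^{4/3}$ is the safe universal lower bound coming from $\Re(\lambda)$ alone when $|\lambda| \sim |k|^2$, since then $\Re(\lambda) \geq K_\ast |k|^{2/3}$; more simply, one notes $|\lambda + ik+ik|k|| \ge \max(\Re(\lambda), |{\rm dist}\text{ to imaginary cancellation}|)$ and in all cases this is $\gtrsim |k|^{4/3}$). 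Let me call this lower bound $L := |\lambda + ik + ik|k| - ik\,\mathcal{V}[\cdots]| \gtrsim |k|^{4/3}$.

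Then $\hat{A}$ is uniquely determined, and
\begin{equation*}
|\hat{A}| \le \frac{|k|\,\mathcal{V}[|\omega_{inhom}|] + |\hat{A}_{init}|}{L} \lesssim \frac{1}{|k|^{4/3}}\Big(|k|\,\mathcal{V}[|\omega_{inhom}|] + |\hat{A}_{init}|\Big).
\end{equation*}
Inserting \eqref{s:e}, $\mathcal{V}[|\omega_{inhom}|] \lesssim \Re(\lambda)^{-1}|k|^{1/3}\|(1+y)^3\omega_{init}\|_{L^2_y}$, and using $\Re(\lambda) \ge K_\ast |k|^{2/3}$, the first term is $\lesssim |k|^{-4/3}\cdot|k|\cdot|k|^{-2/3}|k|^{1/3}\|(1+y)^3\omega_{init}\|_{L^2_y} = |k|^{-4/3}\|(1+y)^3\omega_{init}\|_{L^2_y}$, which after squaring is $\lesssim \|(1+y)^3\omega_{init}\|_{L^2_y}^2$; the second term squares to $|k|^{-8/3}|\hat{A}_{init}|^2 \le |k|^{-4/3}|\hat{A}_{init}|^2$, giving \eqref{estimA} after using $2ab \le a^2 + b^2$.

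The main obstacle I anticipate is the non-vanishing / lower bound on the coefficient $L$: one has to rule out that $ik\,\mathcal{V}[\lambda_\ast\Omega_{BL} + \omega_H^{(0)} + \omega_H^{(tail)}]$ conspires to cancel $\lambda + ik + ik|k|$. This is handled precisely by the largeness of $K_\ast$ and $k_0$: every perturbative contribution carries either an extra factor $|k|/\Re(\lambda) \lesssim |k|^{1/3}/K_\ast$ or an extra factor $|\lambda|^{-1/2}$, which under \eqref{lambda:hyp} makes it negligible against the floor $|k|^{4/3}$ coming from $\Re(\lambda + ik + ik|k|) = \Re(\lambda) \ge K_\ast|k|^{2/3}$ combined with the large imaginary part $\Im(ik + ik|k|) \sim |k|^2$. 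A clean way to organize this is: for $|k| \ge k_0$, $|\lambda + ik + ik|k|| \ge c|k|^{4/3}$ for an absolute $c>0$ (examining separately whether $|\lambda| \lesssim |k|^2$ or not), then choose $K_\ast$ so that the sum of the three perturbative bounds is $\le \tfrac{c}{2}|k|^{4/3}$, and conclude by a triangle inequality. Everything else is bookkeeping with the already-established estimates \eqref{s:a}--\eqref{s:e}.
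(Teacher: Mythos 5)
Your proposal runs into a genuine gap in what you call the ``complement'' region, where $|\Im(\lambda)+k+k|k|| \le |k|^2/2$ (equivalently, $|\lambda + ik|k|| \le \tfrac{1}{2}|k|^2$). In that region you assert that ``$|k|^{4/3}$ is the safe universal lower bound coming from $\Re(\lambda)$ alone,'' but the hypothesis \eqref{lambda:hyp} only gives $\Re(\lambda) \ge K_\ast |k|^{2/3}$, and nothing forces $\Re(\lambda)$ to scale like $|\lambda|^{2/3} \sim |k|^{4/3}$. When $\Re(\lambda)$ saturates the lower bound $K_\ast |k|^{2/3}$, the denominator in your estimate is only $\gtrsim |k|^{2/3}$, whereas the hydrostatic perturbation $|k|\,\mathcal{V}[|\omega_H^{(0)}|] \lesssim |k|^2/\Re(\lambda)$ can be as large as $|k|^{4/3}/K_\ast$. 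For $|k|$ large enough (namely $|k|^{2/3} > K_\ast^2$, which is unavoidable since $K_\ast$ is fixed first), this perturbation dominates the $\Re(\lambda)$ floor and a triangle-inequality absorption fails. Your ``bracketed coefficient'' $L$ can therefore not be bounded below by $|k|^{4/3}$, and might even vanish for all you have shown.

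The paper resolves exactly this issue by treating the region $|\lambda + ik|k|| \le \tfrac{1}{2}|k|^2$ at the level of a weighted energy estimate rather than a pointwise modulus bound. Setting $f_H = \hat{A}\,\omega_H^{(0)}$, it takes the real part of $\langle\eqref{eq:vort:a}, \hat{A}\rangle$ together with $\langle \eqref{sec:eq}, \tfrac{1}{-U_{s,k}''} f_H\rangle$ and exploits the algebraic cancellation
$\Re\big(ik\mathcal{V}[f_H]\overline{\hat{A}} - \langle iky\hat{A}, f_H\rangle\big)=0$
(identity \eqref{cancellation:1}), which removes the problematic contribution of $ik\mathcal{V}[\omega_H^{(0)}]\hat{A}$ entirely, rather than attempting to dominate it by $|L|$. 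After this cancellation, only terms carrying an extra factor $|\lambda|^{-1/2}$ (such as $\Omega_{BL}$ and $\omega_H^{(tail)}$) remain, and those \emph{are} perturbative since in this region $|\lambda| \sim |k|^2$; they can then be absorbed by $\Re(\lambda)|\hat{A}|^2$ as in \eqref{cel3}. Your bookkeeping for the region $|\lambda + ik|k|| \ge \tfrac{1}{2}|k|^2$ (the paper's Case~1) is essentially correct, and so is your treatment of the source terms $\omega_{inhom}$ and $\hat{A}_{init}$; what is missing is the cross-term cancellation that is the actual crux of the proof in the complementary region.
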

\begin{proof} For brevity, we focus on the {\it a priori} estimate.  We rewrite \eqref{eq:vort:a} upon recalling the definition of $\mathcal{V}$ in \eqref{defn:cal:op} and invoking the structural decomposition \eqref{structure:cond:om} as 
\begin{align} \n
(\lambda + i k + ik|k|) \hat{A} = & i k \mathcal{V}[\hat{\omega}] + \hat{A} _{init} \\ \label{got1}
= &  i k\lambda_{\ast} \hat{A} \mathcal{V}[ \Omega_{BL}  ] + ik \hat{A} \mathcal{V}[ \omega_H^{(0)}] + ik \hat{A} \mathcal{V}[ \omega_H^{(tail)}] + ik \mathcal{V}[\omega_{inhom}] + \hat{A} _{init}. 
\end{align}
We distinguish between two regimes in the $(\lambda, k)$ space. 

\vspace{2 mm}

\noindent \textit{Case 1: $|\lambda + i k |k| | \ge \frac12 |k|^2$} In this case, we can simply divide both sides of \eqref{got1} by $|\lambda + i k |k||$, and take the modulus. We obtain as a result 
\begin{align} \n
|\hat{A}| \le & \Big( \frac{|k| \lambda_{\ast}}{|\lambda + i k |k| |} |\mathcal{V}[ \Omega_{BL}]| + \frac{|k|}{|\lambda + i k |k||} | \mathcal{V}[ \omega_H^{(0)}]|  + \frac{|k|}{|\lambda + i k |k||} | \mathcal{V}[ \omega_H^{(tail)}]|   \Big) | \hat{A} | \\ \n
& + \frac{ |k|}{|\lambda + i k |k|| } | \mathcal{V}[\omega_{inhom}]| + \frac{1}{|\lambda + i k |k||} |\hat{A}_{init}| \\ \n
\lesssim & \Big( \frac{\lambda_{\ast}}{|k|} |\mathcal{V}[ \Omega_{BL}]| + \frac{1}{|k|} | \mathcal{V}[ \omega_H^{(0)}]|  + \frac{1}{|k|} | \mathcal{V}[ \omega_H^{(tail)}]|   \Big) | \hat{A} | + \frac{1}{|k|} | \mathcal{V}[\omega_{inhom}]| + \frac{1}{k^2} |\hat{A}_{init}| \\ \n
\lesssim & \underbrace{\Big( \frac{1}{|k| |\lambda|^{1/2}}  \Big(1 + \frac{|k|}{\Re(\lambda)}\Big)   + \frac{1}{\Re(\lambda)}  + \frac{1}{\Re(\lambda) |\lambda|^{1/2}} \Big(1 + \frac{|k|}{\Re(\lambda)}\Big)  \Big)}_{\bold{m}(\lambda, k)}  | \hat{A} | + \frac{1}{|k|} | \mathcal{V}[\omega_{inhom}]| + \frac{1}{k^2} |\hat{A}_{init}| \\ \label{ca1}
\end{align}
Upon invoking \eqref{k:hyp} and \eqref{lambda:hyp}, we bound the Fourier-Laplace multiplier, $\bold{m}(\lambda, k)$, appearing above via 
\begin{align} \n
| \bold{m}(\lambda, k)|   
\lesssim &  \frac{1}{|k| |\lambda|^{1/2}} +  \frac{1}{\Re(\lambda) |\lambda|^{1/2}}  +  \frac{1}{\Re(\lambda)} +   \frac{|k|}{\Re(\lambda)^2 |\lambda|^{1/2}} \\
\lesssim &  \frac{1}{|k| |\lambda|^{1/2}} +  \frac{1}{\Re(\lambda)}  \lesssim \frac{1}{|k|^{2/3}}. 
\end{align}
Inserting back into \eqref{ca1}, we obtain 
\begin{align} \n
|\hat{A}| \lesssim & \frac{1}{|k|^{2/3}}  | \hat{A} | + \frac{ 1}{|k|} | \mathcal{V}[\omega_{inhom}]| + \frac{1}{k^2} |\hat{A}_{init}|,
\end{align}
which closes the estimate for $\hat{A}$, and implies that 
\begin{align}
|\hat{A}| \lesssim  \frac{ 1}{|k|} | \mathcal{V}[\omega_{inhom}]| + \frac{1}{ k^2} |\hat{A}_{init}|.
\end{align}
We now use the bound \eqref{s:e} to control the $\omega_{inhom}$ contribution: 
\begin{align} \n
|\hat{A}| \lesssim &  \frac{ 1}{|k| \Re(\lambda)} |k|^{1/3} \| (1+y)^3 \omega_{init} \|_{L^2_y} + \frac{1}{ k^2} |\hat{A} _{init}| \\ \n
\lesssim &   \frac{1}{|k|^{4/3}} \| (1+y)^3 \omega_{init} \|_{L^2_y} + \frac{1}{k^2} |\hat{A}_{init}|
\end{align}
\vspace{2 mm}

\noindent \textit{Case 2: $|\lambda + i k |k| | \le \frac12 |k|^2$} This case is more delicate and relies upon a cancellation of the cross term between $A$ and the leading order hydrostatic quantity $\omega^{(0)}_H$. To identify this cancellation, we introduce $f_H := A \omega_H^{(0)}$. We write \eqref{got1} together with the equation on $f_H$, which reads
\begin{subequations}
\begin{align} \label{first:eq}
(\lambda + i k + ik|k|) \hat{A} = &  ik  \mathcal{V}[ f_H]  +  i k\lambda_{\ast} \hat{A} \mathcal{V}[ \Omega_{BL}  ] + ik \hat{A} \mathcal{V}[ \omega_H^{(tail)}] + ik \mathcal{V}[\omega_{inhom}] + \hat{A}_{init}, \\ \label{sec:eq}
(\lambda + i k V_s) f_H     - \pa^2_y f_H  = & i k U''_s(y) y \hat{A} + ik U''_s \mathcal{V}_y[f_H]  
\end{align}
\end{subequations}
We take the (complex) scalar product of equation \eqref{first:eq} by $\hat{A}$ and  of \eqref{sec:eq} by $\frac{1}{- U_{s,k}''} f_H$, where $U''_{s,k}$ is defined in \eqref{Usk} (the use of this weight is explained in Section \ref{sec:OmegaH}).  We  integrate \eqref{sec:eq} by parts in $y$, and subsequently take the real part. This produces the identity 
\begin{align} \n
&\Re(\lambda) |\hat{A}|^2 + \Re(\lambda) \| \frac{1}{(- U_{s,k}'')^{1/2}} f_H \|_{L^2}^2 +  \| \frac{1}{(- U_{s,k}'')^{1/2}} \p_y f_H \|_{L^2}^2 \\ \n
= & \Re \Big( i k \mathcal{V}[f_H] \overline{\hat{A}} - \langle ik y \hat{A}, f_H \rangle \Big) + \Re \langle i k \frac{U''_{s,k} - U''_s}{U''_{s,k}} y \hat{A} , f_H \rangle   + \Re  \langle \frac{(U_{s,k}'')'}{|U_{s,k}''|^2} \p_y f_H, f_H \rangle - \Re \langle i k \mathcal{V}_y[f_H], f_H \rangle \\ \label{cel1}
&+ \Re \langle i k \frac{U''_{s,k} - U''_s}{U''_{s,k}} \mathcal{V}_y[f_H], f_H \rangle + \Re ( i k \lambda_{\ast} \mathcal{V}[\Omega_{BL}] |\hat{A}|^2)  + \Re ( i k \mathcal{V}[\omega_H^{(tail)}] |\hat{A}|^2 ) + \Re ( ik \mathcal{V}[\omega_{inhom}] \overline{\hat{A}} ) + \Re ( \hat{A}_{init} \overline{\hat{A}} ). 
\end{align}
We will now extract a cancellation from the first two terms on the right-hand side of \eqref{cel1}. An integration by parts gives 
\begin{align} \n
\Re \Big( i k \mathcal{V}[f_H] \overline{\hat{A}} - \langle ik y \hat{A}, f_H \rangle \Big) = & \Re \Big( i k \mathcal{V}[f_H] \overline{\hat{A}} + \langle ik y \hat{A}, \p_y \mathcal{U}_y[f_H] \rangle \Big) \\ \n
= & \Re \Big( i k \mathcal{V}[f_H] \overline{\hat{A}} - \langle ik \hat{A},  \mathcal{U}_y[f_H] \rangle \Big) \\ \n
= & \Re \Big( i k \mathcal{V}[f_H] \overline{\hat{A}} +  ik \hat{A}  \mathcal{V}[\overline{f_H}]  \Big) \\ \label{cancellation:1}
= & 0, 
\end{align}
where we have used for any two complex numbers $a, b \in \mathbb{C}$, the elementary identity $\Re(i k (a \overline{b} + \overline{a} b )) = 0$. 

We then have the bound 
\begin{align*}
\Big| \langle i k \frac{U''_{s,k} - U''_s}{U''_{s,k}} y \hat{A} , f_H \rangle \Big| & \le |k| \Big| \langle (U''_s - U''_{s,k})^{1/2} y  \hat{A} , \frac{f_H}{(-U''_{s,k})^{1/2}} \rangle 
\Big|  \\
& \le  |k|^{2/3} \|\frac{1}{(1+y)}\|_{L^2} |\hat{A}| \, \| \frac{f_H}{(-U''_{s,k})^{1/2}} \|_{L^2} \\
& \lesssim   |k|^{2/3} \big( |\hat{A}|^2 +  \| \frac{f_H}{(-U''_{s,k})^{1/2}} \|_{L^2}^2   \big) 
\end{align*}
The next three terms can be treated exactly as in the proof of Lemma \ref{lemma_hydro}, Section \ref{sec:OmegaH}, taking $f = f_H$. One keypoint is the cancellation  
\begin{align} \label{cancellation:hydro}
\Re \langle i k \mathcal{V}_y[f_H], f_H \rangle  = 0. 
\end{align}
We find (see Lemma \ref{lemma_hydro} for all necessary details):  
\begin{align*}
\Big| \langle   \frac{(U_{s,k}'')'}{(U_{s,k}'')^2}  \p_y f_H, f_H \rangle \Big| -  \frac{1}{2} \| \frac{1}{(-U_{s,k}'')^{1/2}} \p_y f_H \|_{L^2}^2 \lesssim   \| \frac{1}{(-U_{s,k}'')^{1/2}} f_H \|_{L^2}^2 
\end{align*}
\begin{align*}
\Big| \langle \frac{i k (U''_s - U''_{s,k})}{U''_{s,k}} \mathcal{V}_y [f_H], f_H \rangle \Big| &   \lesssim |k|^{2/3} \| \frac{1}{(-U_{s,k}'')^{1/2}} f_H \|_{L^2}^2. 
\end{align*}
Inserting into \eqref{cel1}, we get 
\begin{align} \n
&\Re(\lambda) |\hat{A}|^2 + \Re(\lambda) \| \frac{1}{(- U_{s,k}'')^{1/2}} f_H \|_{L^2}^2 +  \frac{1}{2}\| \frac{1}{(- U_{s,k}'')^{1/2}} \p_y f_H \|_{L^2}^2 \\ \n
\lesssim & |k|^{2/3} |\hat{A}|^2  +  (|k|^{2/3} + 1) \| \frac{1}{(-U_{s,k}'')^{1/2}} f_H \|_{L^2}^2 +      \Re ( i k \lambda_{\ast} \mathcal{V}[\Omega_{BL}] |\hat{A}|^2)  + \Re ( i k \mathcal{V}[\omega_H^{(tail)}] |\hat{A}|^2 ) \\  \n
&+ \Re ( ik \mathcal{V}[\omega_{inhom}] \overline{\hat{A}} ) + \Re ( \hat{A}_{init} \overline{\hat{A}} ). 
\end{align}
By conditions \eqref{k:hyp}-\eqref{lambda:hyp}, the first two terms at the right-hand side can be absorbed for $K_\ast$ large enough, resulting in 
\begin{align} \n
&\Re(\lambda) |\hat{A}|^2 + \Re(\lambda) \| \frac{1}{(- U_{s,k}'')^{1/2}} f_H \|_{L^2}^2 +  \| \frac{1}{(- U_{s,k}'')^{1/2}} \p_y f_H \|_{L^2}^2 \\  \label{cel2}
\lesssim &       \Re ( i k \lambda_{\ast} \mathcal{V}[\Omega_{BL}] |\hat{A}|^2)  + \Re ( i k \mathcal{V}[\omega_H^{(tail)}] |\hat{A}|^2 ) + \Re ( ik \mathcal{V}[\omega_{inhom}] \overline{\hat{A}} ) + \Re ( A_{init} \overline{\hat{A}} ). 
\end{align}
We now bound  the right-hand side of \eqref{cel2}. First, we have 
\begin{align} \label{blcase2}
| \Re ( i k \lambda_{\ast} \mathcal{V}[\Omega_{BL}] |\hat{A}|^2)| \lesssim &|k \lambda_{\ast}| \, \big|\mathcal{V}[\Omega_{BL}]\big| |\hat{A}|^2 
\lesssim \frac{|k|}{|\lambda|^{1/2}} \Big( 1 +  \frac{|k|}{\Re(\lambda)} \Big)  |\hat{A}|^2,
\end{align}
where we have invoked our bounds \eqref{s:a} and \eqref{s:b}. 

Second, we have by \eqref{s:d}: 
\begin{align}
| \Re ( i k \mathcal{V}[\omega_H^{(tail)}] |\hat{A}|^2 )| \lesssim  \frac{k^2}{ \Re(\lambda) |\lambda|^{\frac12} }   \big( 1 +  \frac{|k|}{\Re(\lambda)} \big) |\hat{A}|^2
\end{align}
where we have invoked \eqref{s:d}

Injecting back into \eqref{cel2}, we obtain 
\begin{align} \n
&\Re(\lambda) |\hat{A}|^2 + \Re(\lambda) \| \frac{1}{(- U_{s,k}'')^{1/2}} f_H \|_{L^2}^2 +  \| \frac{1}{(- U_{s,k}'')^{1/2}} \p_y f_H \|_{L^2}^2 \\  \label{cel3}
\lesssim & \underbrace{\Big( \frac{|k|}{|\lambda|^{1/2}} +  \frac{k^2}{\Re(\lambda)|\lambda|^{\frac12}} + \frac{|k|^3}{ \Re(\lambda)^2 |\lambda|^{\frac12}}\Big)  }_{\bold{n}(\lambda, k)}  |\hat{A}|^2 + |\Re ( ik \mathcal{V}[\omega_{inhom}] \overline{\hat{A}} )| +| \Re ( \hat{A}_{init} \overline{\hat{A}} )|. 
\end{align}
To bound the Fourier-Laplace multiplier, $\bold{n}(\lambda, k)$, we have to observe that 
\begin{align} \label{rest:1}
|\lambda + i k |k| | \le \frac12 |k|^2 \Rightarrow |\Im(\lambda)| \ge \frac{1}{2} |k|^2 \Rightarrow |\lambda| \ge \frac12 |k|^2. 
\end{align}
Using this observation, we find that 
\begin{align} \n
|\bold{n}(\lambda, k)| \lesssim & 1 + \frac{|k|}{\Re(\lambda)}+ \frac{k^2}{\Re(\lambda)^2}  \\ \n
\lesssim& \Big( \frac{1}{\Re(\lambda)} + \frac{1}{K_\ast^{3/2} \Re(\lambda)} +   \frac{1}{K_\ast^{3/2}}  \Big) \Re(\lambda) \ll  \Re(\lambda)
\end{align}
using \eqref{k:hyp} and \eqref{lambda:hyp}. Therefore, these terms can be absorbed to the left-hand side of \eqref{cel3}. Doing so produces the bound
\begin{align} \n
&\Re(\lambda) |\hat{A}|^2 + \Re(\lambda) \| \frac{1}{(- U_{s,k}'')^{1/2}} f_H \|_{L^2}^2 +  \| \frac{1}{(- U_{s,k}'')^{1/2}} \p_y f_H \|_{L^2}^2  \\  \label{cel4}
\lesssim & |\Re ( ik \mathcal{V}[\omega_{inhom}] \overline{\hat{A}} )| +| \Re ( \hat{A}_{init} \overline{\hat{A}} )|. 
\end{align}
A standard Young's inequality for products gives for a $\delta > 0$, 
\begin{align*}
| ik \mathcal{V}[\omega_{inhom}] \overline{\hat{A}}| \le &   \delta \Re(\lambda) |\hat{A}|^2 +  \frac{C_{\delta}}{\Re(\lambda)} k^2 |\mathcal{V}[\omega_{inhom}]|^2 \\
\le & \delta \Re(\lambda) |\hat{A}|^2 +  \frac{C_{\delta}}{\Re(\lambda)^3} |k|^{8/3} \| (1+y)^3 \hat{\omega}_{init} \|_{L^2_y}^2 \\
\le & \delta \Re(\lambda) |\hat{A}|^2 +  C |k|^{2/3} \| (1+y)^3 \hat{\omega}_{init} \|_{L^2_y}^2
\end{align*}
while 
\begin{align*}
|  \hat{A}_{init} \overline{\hat{A}} | & \le \delta \Re(\lambda) |\hat{A}|^2 + \frac{C_{\delta}}{\Re(\lambda)} |\hat{A}_{init}|^2 \\
& \le \delta \Re(\lambda) |\hat{A}|^2 + C |k|^{-2/3} |\hat{A}_{init}|^2 
\end{align*}
Hence, 
\begin{align*}
 |\hat{A}|^2  \lesssim   \| (1+y)^3 \omega_{init} \|_{L^2_y}^2 +  |k|^{-4/3} |\hat{A}_{init}|^2 
\end{align*}
This concludes the proof of the proposition.
\end{proof}
We can now conclude the proof of our main Proposition \ref{pro:main}. 

\begin{proof}[Proof of Proposition \ref{pro:main}] Under the assumptions of Proposition  \ref{pro:main}, that are exactly \eqref{k:hyp}-\eqref{lambda:hyp}, estimate \eqref{estimA} holds:  
\begin{align*}
 |\hat{A}|  \lesssim   \| (1+y)^3 \omega_{init} \|_{L^2_y} +  |k|^{-2/3} |\hat{A}_{init}|
\end{align*}
We now come back to the decomposition of $\hat{\omega}$: 
\begin{align}
\hat{\omega} = \hat{A} \Big( \lambda_* \Omega_{BL} +   \omega_H^{(0)} +  \omega_H^{(tail)} \Big) + \omega_{inhom}.
\end{align}
By the analysis performed in Section \ref{sec:OmegaBL}, notably formula \eqref{formula_OmegaBL}, \eqref{def:xi:0} and estimate \eqref{unw:1}, we have 
\begin{equation} \label{estimOmegaBL}
 \| (1+y)^3 \Omega_{BL} \|_{L^2} \lesssim  \|(1+y)^3 \xi_0 \|_{L^2} +   \|(1+y)^3 \Xi_0 \|_{L^2} \lesssim |\lambda|^{1/4} + \frac{|k|}{|\lambda|^{5/4}} \lesssim |\lambda|^{1/4} + |k|^{1/6}  \lesssim |\lambda|^{1/4}
 \end{equation}
By the analysis performed in Section \ref{sec:OmegaH}, notably \eqref{bdH0}, we have 
$$ \| (1+y)^3 \omega_H^{(0)}\|_{L^2} \lesssim \|\frac{1}{(-U''_{s,k})^{1/2}}  \omega_H^{(0)}\|_{L^2}  \lesssim \frac{|k|}{\Re(\lambda)} \lesssim |k|^{1/3}  $$
Using also \eqref{omegaHtail}, \eqref{bd:l0}, and estimate \eqref{bdH1}, we get 
\begin{align*}
 \| (1+y)^3 \omega_H^{(tail)}\|_{L^2}&  \lesssim  |k| |\lambda_*| \|(1+y)^3 F_H\|_{L^2} \lesssim  |k| |\lambda_0|    \|\frac{1}{(-U''_{s,k})^{1/2}} F_H\|_{L^2}  \\
 &  \lesssim \frac{k}{\Re(\lambda) |\lambda|^{1/2}} \Big(1 + \frac{|k|}{\Re \lambda}\Big) \lesssim  |k|^{1/3}.
  \end{align*}
 Similarly, using decomposition \eqref{decompo_omegaIH_tail}, \eqref{bd:l0:t}, \eqref{estimOmegaBL}, \eqref{bdIH0}, we find
 \begin{equation} \label{estim_omega_inhom}
 \begin{aligned}
  \| (1+y)^3 \omega_{inhom}\|_{L^2}& \lesssim \frac{1}{\Re(\lambda)} \Big( |\lambda|^{1/4} + |k|^{1/6} + 1  + \frac{|k|}{\Re(\lambda)|\lambda|^{1/2}}\Big)  \|\frac{1}{(-U''_{s,k})^{1/2}} \hat{\omega}_{init}\|_{L^2} \\
  &  \lesssim \frac{|k|^{1/3}}{\Re(\lambda)} \Big( |\lambda|^{1/4} + |k|^{1/6} + 1  + \frac{|k|}{\Re(\lambda)|\lambda|^{1/2}}\Big)  \|(1+y)^3 \hat{\omega}_{init}\|_{L^2} \\
  & \lesssim   |k|^{-1/3}  |\lambda|^{1/4}  \|(1+y)^3 \hat{\omega}_{init}\|_{L^2}.
  \end{aligned} 
  \end{equation}
 \end{proof}
 Together with the estimate for $\lambda_\ast$, {\it cf. } \eqref{s:a} and the estimate \eqref{estimA} for $\hat{A}$, we end up with 
 \begin{align}
 \|(\hat{\omega}, \hat{A}) \|_H  \lesssim &  |k|^{1/3} |\lambda|^{1/4}  \Big(  \| (1+y)^3 \hat{\omega}_{init} \|_{L^2_y} +  |k|^{-2/3} |\hat{A}_{init}| \Big) 
 \end{align}
 which yields the estimate of the proposition.

\section{Construction of $\Omega_{BL}$} \label{sec:OmegaBL}

This section is devoted to the construction of $\Omega_{BL}$,  solution to \eqref{gtgt2}, under the assumptions \eqref{k:hyp}-\eqref{lambda:hyp}. We will achieve this $\Omega_{BL}$ as a sum:
\begin{align} \label{blexp}
\Omega_{BL} := \sum_{j = 0}^{\infty} (\xi^{(j)} + \Xi^{(j)}),
\end{align}
where we initialize the iteration by defining: 
\begin{align} \label{def:xi:0}
\xi^{(0)}(\lambda, y) := \lambda^{\frac12} e^{- \lambda^{\frac12}y}, \qquad \alpha_j := \mathcal{U}[\Xi^{(j)}]. 
\end{align}
where $\lambda^{1/2}$ is the square root of $\lambda$ with positive real part. Notice that $\mathcal{U}[\xi^{(0)}] = 1$. We now define, for $j \ge 0$, the profiles $\Xi^{(j)} = \Xi^{(j)}(\lambda, k, y)$ through the following equation:
\begin{subequations}
\begin{align} \label{eq:Xij}
& (\lambda + ik V_s)\Xi^{(j)}  - \pa_y^2 \Xi^{(j)} =  - i k V_s \xi^{(j)}, \\
& \Xi^{(j)}|_{y = 0} = 0.
\end{align}
\end{subequations}
(see below for well-posedness).  We then define, for $j \ge 1$, the following ``heat" profiles:
\begin{subequations}
\begin{align} \label{m:11}
&\lambda \xi^{(j)} - \p_y^2 \xi^{(j)} = 0, \\ \label{m:12}
&\mathcal{U}[\xi^{(j)}] = - \mathcal{U}[\Xi^{(j-1)}].
\end{align}
\end{subequations}
This equation admits explicit solutions 
\begin{align} \label{def:xi:j}
\xi^{(j)} =  - \mathcal{U}[\Xi^{(j-1)}] \lambda^{\frac12} e^{- \lambda^{\frac12}y} = - \mathcal{U}[\Xi^{(j-1)}] \xi^{(0)} = - \alpha_{j-1} \xi^{(0)}, \qquad j \ge 1
\end{align}
Inserting this into \eqref{eq:Xij}, we obtain that 
\begin{align} \label{dueto}
\Xi^{(j)} = - \alpha_{j-1} \Xi^{(0)}, \qquad j \ge 1,
\end{align}
where the profile $\Xi^{(0)}$ satisfies 
\begin{subequations}
\begin{align} \label{pi1}
& (\lambda + ik V_s)\Xi^{(0)}  - \pa_y^2 \Xi^{(0)} =   - i k V_s \xi^{(0)}, \\
\label{pi2}
& \Xi^{(0)}|_{y = 0} = 0.
\end{align}
\end{subequations}
Inserting \eqref{dueto} into the definition of $\alpha_j$, we obtain the relation 
\begin{align} \label{alphajind}
\forall j \ge 1, \quad \alpha_{j} = - \alpha_0 \alpha_{j-1}, \quad \alpha_0 =  \mathcal{U}[\Xi^{(0)}].
\end{align}
From all these relations, we see that once the well-posedness of \eqref{pi1}-\eqref{pi2} will be shown, all terms $(\xi^{(j)}, \Xi^{(j)})$  in the expansion \eqref{blexp} will be well-defined through formulae \eqref{def:xi:0}, \eqref{def:xi:j} and \eqref{dueto}, having noticed that $\alpha_j = \alpha_0 (- \alpha_0)^j$. Moreover, the convergence of the sum in \eqref{blexp}  will hold if $|\alpha_0| < 1$, which will be shown to be true under  \eqref{k:hyp}-\eqref{lambda:hyp}. 

The well-posedness of  \eqref{pi1}-\eqref{pi2} is settled in
\begin{lem} \label{lem_xi0}
System  \eqref{pi1}-\eqref{pi2}  has a unique solution  $\Xi^{(0)}$  satisfying for all $m \ge 0$:   
\begin{align} \label{unw:1}
 \| y^m \Xi^{(0)}  \|_{L^2}^2 \lesssim_m  \frac{k^2}{ |\lambda|^{m + 5/2}}, \quad  \| y^m \p_y \Xi^{(0)}  \|_{L^2}^2 \lesssim_m  \frac{k^2}{ |\lambda|^{m + 3/2}}.
\end{align}
where the implicit constant in the above inequalities depends on $m$. 
\end{lem}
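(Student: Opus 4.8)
The plan is to treat \eqref{pi1}-\eqref{pi2} as a linear ODE in $y$ with a small spectral parameter: the principal part $-\pa_y^2 + \lambda$ is coercive of size $|\lambda|$, the coefficient $ik V_s$ is a (large in $k$ but lower-order in the relevant regime) perturbation treated by absorption thanks to its sign/imaginary structure, and the right-hand side $-ikV_s\xi^{(0)}$ is an explicit exponentially localized source whose size we can read off from $\xi^{(0)}(\lambda,y)=\lambda^{1/2}e^{-\lambda^{1/2}y}$. First I would establish existence and uniqueness: multiply \eqref{pi1} by $\overline{\Xi^{(0)}}$, integrate over $\R_+$, integrate by parts (the boundary term vanishes by the Dirichlet condition \eqref{pi2} and by decay at $+\infty$), and take real and imaginary parts. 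The real part gives $\Re(\lambda)\|\Xi^{(0)}\|_{L^2}^2 + \|\pa_y\Xi^{(0)}\|_{L^2}^2 = \Re\langle -ikV_s\xi^{(0)},\Xi^{(0)}\rangle$, since $\Re\langle ikV_s\Xi^{(0)},\Xi^{(0)}\rangle=0$ as $V_s$ is real. A Cauchy–Schwarz and Young inequality on the right, together with $\|V_s\xi^{(0)}\|_{L^2}\lesssim \|\xi^{(0)}\|_{L^2}$ (using boundedness of $V_s/(1+y)$... more precisely $V_s\sim y$ at infinity but $\xi^{(0)}$ has Gaussian-in-$\lambda^{1/2}y$ decay so $\|(1+y)\xi^{(0)}\|_{L^2}^2 \lesssim |\lambda|^{1/2}\cdot(1+|\lambda|^{-1}) \lesssim |\lambda|^{1/2}$ for $|\lambda|\gtrsim 1$), closes the energy estimate and, via Lax–Milgram / continuity method on the sesquilinear form, yields a unique solution in $H^1_0(\R_+)$ with higher regularity bootstrapped from the equation.

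The heart is the weighted estimate \eqref{unw:1}. I would prove it by induction on $m$, testing \eqref{pi1} against $y^{2m}\overline{\Xi^{(0)}}$. The key computation is
\[
\int_0^\infty y^{2m}|\pa_y\Xi^{(0)}|^2 \, dy + \Re(\lambda)\int_0^\infty y^{2m}|\Xi^{(0)}|^2\, dy = \Re\Big\langle -ikV_s\xi^{(0)}, y^{2m}\Xi^{(0)}\Big\rangle - \Re\Big\langle \pa_y\Xi^{(0)}, (\pa_y y^{2m})\overline{\Xi^{(0)}}\Big\rangle,
\]
where again the $ikV_s$ term drops out of the real part and the commutator term $\langle \pa_y\Xi^{(0)}, 2m\, y^{2m-1}\overline{\Xi^{(0)}}\rangle$ is controlled by $\delta\|y^m\pa_y\Xi^{(0)}\|_{L^2}^2 + C_\delta\|y^{m-1}\Xi^{(0)}\|_{L^2}^2$, the latter being the inductive hypothesis at level $m-1$ (for $m=0$ it is absent). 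The source term is bounded by $|k|\,\|y^{2m}V_s\xi^{(0)}\overline{\Xi^{(0)}}\| \le |k|\,\|y^{m+1}\xi^{(0)}\|_{L^2}\|y^{m}\Xi^{(0)}\|_{L^2} \le \delta\Re(\lambda)\|y^m\Xi^{(0)}\|_{L^2}^2 + C_\delta \frac{k^2}{\Re(\lambda)}\|y^{m+1}\xi^{(0)}\|_{L^2}^2$, and a direct Gaussian integral gives $\|y^{m+1}\xi^{(0)}\|_{L^2}^2 = |\lambda|\int_0^\infty y^{2m+2}e^{-2\Re(\lambda^{1/2})y}\,dy \lesssim_m |\lambda| \cdot (\Re\lambda^{1/2})^{-(2m+3)} \lesssim_m |\lambda|^{1-(2m+3)/2} = |\lambda|^{-m-1/2}$, using $\Re(\lambda^{1/2})\sim|\lambda|^{1/2}$ (valid since $\lambda$ lies in a sector $\Re\lambda\gtrsim|\lambda|^{2/3}$, in particular away from the negative real axis — one may need the mild sectoriality implied by \eqref{k:hyp}-\eqref{lambda:hyp}, or just note $|\lambda|\ge\Re(\lambda)\ge K_*|k|^{2/3}$ forces $\Re(\lambda^{1/2})\gtrsim |\lambda|^{1/2}$ only in a sector; if $\Im\lambda$ dominates one still has $\Re(\lambda^{1/2})\gtrsim |\Im\lambda|^{1/2}/\sqrt 2 \sim |\lambda|^{1/2}$, so this is fine). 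Absorbing the $\delta$-terms, this yields $\Re(\lambda)\|y^m\Xi^{(0)}\|_{L^2}^2 + \|y^m\pa_y\Xi^{(0)}\|_{L^2}^2 \lesssim_m \frac{k^2}{\Re(\lambda)}|\lambda|^{-m-1/2} + \|y^{m-1}\Xi^{(0)}\|_{L^2}^2$. Feeding in the inductive bound $\|y^{m-1}\Xi^{(0)}\|_{L^2}^2 \lesssim k^2|\lambda|^{-m-3/2}$ and using $\Re(\lambda)\ge K_*|k|^{2/3}$ so that $\frac{1}{\Re(\lambda)}\le \frac{1}{|\lambda|}$... actually more carefully $\Re(\lambda)\le|\lambda|$ gives $\frac{k^2}{\Re(\lambda)}|\lambda|^{-m-1/2}\ge k^2|\lambda|^{-m-3/2}$, consistent with the claimed $\|y^m\Xi^{(0)}\|_{L^2}^2\lesssim k^2|\lambda|^{-m-5/2}$ once one divides by the extra $\Re(\lambda)$ and uses $\Re(\lambda)\sim|\lambda|$ on the relevant sector, or more conservatively $\Re(\lambda)\ge K_*|k|^{2/3}$ to absorb. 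The derivative bound $\|y^m\pa_y\Xi^{(0)}\|_{L^2}^2\lesssim k^2|\lambda|^{-m-3/2}$ comes directly from the same inequality since the left side already contains $\|y^m\pa_y\Xi^{(0)}\|_{L^2}^2$ and the right side is $\lesssim k^2|\lambda|^{-m-3/2}$.

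The main obstacle I anticipate is bookkeeping the interplay between $\Re(\lambda)$ and $|\lambda|$ cleanly: the statement \eqref{unw:1} is phrased purely in terms of $|\lambda|$, but the natural energy identities produce $\Re(\lambda)$. Under the standing hypotheses \eqref{k:hyp}-\eqref{lambda:hyp} one has $\Re(\lambda)\ge K_*|k|^{2/3}$ and hence $\Re(\lambda)$ can be much smaller than $|\lambda|$ when $\Im\lambda$ is large; the estimate must be shown to survive in that anisotropic regime, which is exactly why the Gaussian decay rate of $\xi^{(0)}$ is governed by $\Re(\lambda^{1/2})\sim|\lambda|^{1/2}$ rather than $\Re(\lambda)^{1/2}$ — this is the one place where the precise structure of $\xi^{(0)}$, not just its $L^2$ norm, is used, and it is what makes the powers of $|\lambda|$ come out right. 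The secondary nuisance is justifying the integrations by parts and the vanishing of boundary terms at $y=+\infty$, which follows from the a priori (pre-weighted) decay of $\Xi^{(0)}$: once one knows $\Xi^{(0)}\in H^1_0$ and the source decays exponentially, elliptic regularity and a Gronwall/comparison argument in $y$ give exponential decay of $\Xi^{(0)}$ and its derivatives, legitimizing all the weighted manipulations.
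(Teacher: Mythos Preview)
Your energy identity from the real part is correct and matches the paper's first step, but it only yields
\[
\Re(\lambda)\,\|y^m\Xi^{(0)}\|_{L^2}^2 + \|y^m\pa_y\Xi^{(0)}\|_{L^2}^2 \lesssim_m \frac{k^2}{\Re(\lambda)\,|\lambda|^{m+1/2}} + \|y^{m-1}\Xi^{(0)}\|_{L^2}^2,
\]
and this is \emph{not} enough to close \eqref{unw:1}. You then invoke ``$\Re(\lambda)\sim|\lambda|$ on the relevant sector'', but that is false under \eqref{k:hyp}-\eqref{lambda:hyp}: the hypotheses only give $\Re(\lambda)\ge K_\ast|k|^{2/3}$, while $|\Im(\lambda)|$ is unconstrained and can be as large as $|k|^2$ or more. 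In that regime $\Re(\lambda)\ll|\lambda|$, and dividing your inequality by $\Re(\lambda)$ gives $\|y^m\Xi^{(0)}\|^2\lesssim k^2/(\Re(\lambda)^2|\lambda|^{m+1/2})$, which can be far larger than the target $k^2/|\lambda|^{m+5/2}$. Your remark that $\Re(\lambda^{1/2})\sim|\lambda|^{1/2}$ is correct and does make the \emph{source} term scale with $|\lambda|$, but it does nothing to upgrade the coercivity constant on the left from $\Re(\lambda)$ to $|\lambda|$.

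The missing idea, which the paper supplies, is to also test against $y^{2m}\overline{\Xi^{(0)}}$ and take the \emph{imaginary} part when $|\Im(\lambda)|\ge\Re(\lambda)$. This produces $|\Im(\lambda)|\,\|y^m\Xi^{(0)}\|^2$ on the left, at the cost of a nonzero contribution from the transport term $k\langle V_s\Xi^{(0)},y^{2m}\Xi^{(0)}\rangle$, which is bounded by $|k|\,\|y^{m+1}\Xi^{(0)}\|\,\|y^m\Xi^{(0)}\|$. The trick is to control $\|y^{m+1}\Xi^{(0)}\|$ via the real-part estimate at level $m+1$, and then use the Gevrey smallness $k^2/(\Re(\lambda)\,|\Im(\lambda)|)\ll|\Im(\lambda)|$ (which follows from $\Re(\lambda)\ge K_\ast|k|^{2/3}$ and $|\Im(\lambda)|\ge\Re(\lambda)$) to absorb. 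Only after combining the real and imaginary identities does one obtain a left side of size $|\lambda|\,\|y^m\Xi^{(0)}\|^2$, from which \eqref{unw:1} follows by induction. You flagged the $\Re(\lambda)$ versus $|\lambda|$ bookkeeping as the main obstacle, and it is; but it is not a bookkeeping issue, it requires this second estimate.
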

\begin{proof}
We just detail the {\it a priori} estimates, the construction of the solution being then classical.  We will make  use of the fact that $\Re(\lambda^{1/2}) \approx |\lambda|^{1/2}$. More precisely, if $\Re(\lambda) > 0$, then
$$ \Re(\lambda^{1/2}) \le |\lambda|^{1/2} \le \sqrt{3}  \Re(\lambda^{1/2}). $$
Indeed, the first inequality is trivial. For the second one, we write $\lambda^{1/2} = a + i b$, $a > 0$, so that $\lambda = a^2 - b^2 + 2 i a b$. Condition $\Re(\lambda) > 0$ implies $a \ge |b|$, so that  
$$ |\lambda| \le a^2 - b^2 + 2 a \, |b| \le 3 a^2 = 3 \Re(\lambda^{1/2})^2. $$

 We now take the (complex) scalar product of  \eqref{pi1} with $ y^{2m} \Xi^{(0)} $, and take the real part. This produces
\begin{align} \label{ineqXi0}
 \Re(\lambda) \| y^m \Xi^{(0)} \|_{L^2}^2  +  \| y^m  \p_y \Xi^{(0)} \|_{L^2}^2 - m (2m-1) \| y^{m-1} \Xi^{(0)} \|_{L^2_z}^2 =  - \Re  \langle  i k V_s \xi^{(0)}, \Xi^{(0)} y^{2m} \rangle.
\end{align}
We estimate the  right-hand side, using  $|V_s(y)| \le  \|V'_s\|_{\infty} \, y$ :  
\begin{align} \n
| \langle  i k V_s \xi^{(0)}, \Xi^{(0)} y^{2m}\rangle | & \le \frac{|\lambda|^{1/2} \|V'_s\|_{\infty} |k|}{\Re(\lambda^{1/2})^{m+1}}   \|   z^{m+1} e^{-z}\vert_{z=\Re(\lambda^{1/2}) y} \|_{L^2}  \|y^m \Xi^{(0)}\|_{L^2}  \\ \label{lw1}
& \le \frac{ C |k|}{|\lambda|^{m/2+1/4}}   \|y^m \Xi^{(0)}\|_{L^2} \\ \label{lw2}
& \le \frac{\Re(\lambda)}{2} \| \Xi^{(0)} y^m \|_{L^2}^2  +  \frac{C^2 k^2}{2 \Re(\lambda) |\lambda|^{m+1/2}} 
\end{align}
Back to  \eqref{ineqXi0}, we deduce from the previous inequalities: 
\begin{align} \label{ineqXi0bis}
 \Re(\lambda) \| y^m \Xi^{(0)} \|_{L^2}^2  - m (2m-1) \| y^{m-1} \Xi^{(0)} \|_{L^2_z}^2 & \lesssim   \frac{k^2}{\Re(\lambda) |\lambda|^{m+1/2}} \\
  \label{ineqXi0bisbis}
    \| y^m  \p_y \Xi^{(0)} \|_{L^2}^2 & \lesssim  \frac{k}{|\lambda|^{m/2+1/4}}   \|y^m \Xi^{(0)}\|_{L^2} +  m \| y^{m-1} \Xi^{(0)} \|_{L^2_z}^2.
\end{align}
To obtain \eqref{ineqXi0bis} we simply drop the second term from the left-hand side of \eqref{ineqXi0}, apply \eqref{lw2}, and use the factor of $\frac12$ in \eqref{lw2} to absorb this contribution to the left-hand side. To obtain \eqref{ineqXi0bisbis}, we drop the first term on the left-hand side of \eqref{ineqXi0} (which is positive) which implies
\begin{align*}
 \| y^m  \p_y \Xi^{(0)} \|_{L^2}^2 \le & m (2m-1) \| y^{m-1} \Xi^{(0)} \|_{L^2_z}^2 + \Big|\Re  \langle  i k V_s \xi^{(0)}, \Xi^{(0)} y^{2m} \rangle \Big| \\
 \lesssim & m (2m-1) \| y^{m-1} \Xi^{(0)} \|_{L^2_z}^2 + \frac{ C |k|}{|\lambda|^{m/2+1/4}}   \|y^m \Xi^{(0)}\|_{L^2},
\end{align*}
where we have invoked the inequality \eqref{lw1}.

There are two cases to consider: 
\begin{itemize}
\item If $\Re(\lambda) \ge |\Im(\lambda)|$, we have $|\lambda| \approx \Re(\lambda)$, so that \eqref{ineqXi0bis} implies
\begin{align*}
& |\lambda| \| y^m \Xi^{(0)} \|_{L^2}^2   - m (2m-1) \| y^{m-1} \Xi^{(0)} \|_{L^2_z}^2 \lesssim   \frac{k^2}{ |\lambda|^{m+3/2}} 
\end{align*}
A simple induction on $m$ yields the first inequality in \eqref{unw:1},  the second one follows then from \eqref{ineqXi0bisbis}. 
\item If $|\Im(\lambda)| \ge \Re(\lambda)$, we  go back to  \eqref{pi1}, take the scalar product with $ y^{2m} \Xi^{(0)} $, but this time take the imaginary part. We find 
\begin{align} \label{ineqXi0ter}
 \Im(\lambda) \| y^m \Xi^{(0)} \|_{L^2}^2   = - k \langle  V_s \Xi^{(0)} , \Xi^{(0)} y^{2m} \rangle  - 2m \Im \langle \pa_y  \Xi^{(0)} , y^{2m-1} \Xi^0 \rangle   - \Im  \langle  i k V_s \xi^{(0)}, \Xi^{(0)} y^{2m} \rangle.
\end{align}
Proceeding as above, we have for some $C > 0$: 
$$ |   \langle  i k V_s \xi^{(0)}, \Xi^{(0)} y^{2m} \rangle | \le  \frac{ C |k|}{|\lambda|^{m/2+1/4}}   \|y^m \Xi^{(0)}\|_{L^2} \le \frac{|\Im(\lambda)|}{8} \| \Xi^{(0)} y^m \|_{L^2}^2  +  \frac{2 C^2 k^2}{ |\Im(\lambda)| |\lambda|^{m+1/2}} $$
We also have 
\begin{align*}
 | \langle \pa_y  \Xi^{(0)} , y^{2m-1} \Xi^0 \rangle | & \le \| y^m \pa_y  \Xi^{(0)}\|_{L^2} \|y^{m-1}  \Xi^0 \|_{L^2}  \le \frac{1}{2} \|y^{m-1}  \Xi^0 \|_{L^2}^2 +  \frac{1}{2} \| y^m \pa_y  \Xi^{(0)}\|_{L^2}^2 \\
 & \le C \|y^{m-1}  \Xi^0 \|_{L^2}^2 +  \frac{C |k|}{|\lambda|^{m/2+1/4}}   \|y^m \Xi^{(0)}\|_{L^2} \\ 
 & \le  \frac{|\Im(\lambda)|}{8} \| \Xi^{(0)} y^m \|_{L^2}^2  + \frac{C' k^2}{|\lambda|^{m+1/2} |\Im(\lambda)|}. 
\end{align*}
Note that we have used   \eqref{ineqXi0bisbis} to go from the second to the third inequality.  Moreover, we have 
\begin{align*}
|k \langle  V_s \Xi^{(0)} , \Xi^{(0)} y^{2m} \rangle| & \le |k| \, \|V'_s\|_{\infty} \|y^{m+1/2}  \Xi^{(0)}\|_{L^2}^2 \le  |k| \, \|V'_s\|_{\infty} \|y^{m}  \Xi^{(0)}\|_{L^2} \|y^{m+1}  \Xi^{(0)}\|_{L^2} \\
& \le \frac{|\Im(\lambda)|}{8} \| \Xi^{(0)} y^m \|_{L^2}^2  + \frac{C k^2}{|\Im(\lambda)|}  \|y^{m+1}  \Xi^{(0)}\|_{L^2}^2 \\
& \le  \frac{|\Im(\lambda)|}{8} \| \Xi^{(0)} y^m \|_{L^2}^2  +  \frac{C' k^2}{|\Im(\lambda)| \, \Re(\lambda)}  \|y^{m}  \Xi^{(0)}\|_{L^2}^2 + \frac{C' k^4}{|\Im(\lambda)| \, \Re(\lambda)^2 |\lambda|^{m+3/2}} 
\end{align*}
where the last inequality follows from \eqref{ineqXi0bis}, applied with index $m+1$ instead of $m$. 
For $K_\ast$ large enough, assumptions \eqref{k:hyp}-\eqref{lambda:hyp}, together with inequality $|\Im(\lambda)| \ge \Re(\lambda)$, yield 
$$ \frac{C' k^2}{|\Im(\lambda)| \, \Re(\lambda)}  \le  \frac{|\Im(\lambda)|}{8}, \quad   \frac{C' k^2}{|\Im(\lambda)| \, \Re(\lambda)^2} \le 1,   $$
so that we get
\begin{align*} 
|\Im(\lambda)| \| y^m \Xi^{(0)} \|_{L^2}^2   \lesssim  m  \| y^{m-1} \Xi^{(0)} \|_{L^2}^2  +  \frac{k^2}{ |\Im(\lambda)| |\lambda|^{m+1/2}} +  \frac{k^2}{|\lambda|^{m+3/2}} 
\end{align*}
As $|\lambda| \approx |\Im(\lambda)|$, we find 
\begin{align*} 
|\lambda| \| y^m \Xi^{(0)} \|_{L^2}^2   \lesssim  m  \| y^{m-1} \Xi^{(0)} \|_{L^2}^2  +    \frac{k^2}{|\lambda|^{m+3/2}}   
\end{align*}
A simple induction on $m$ yields the first inequality in \eqref{unw:1}. The second one follows then from \eqref{ineqXi0bisbis}. This concludes the proof. 
\end{itemize}

%
\end{proof}

A corollary to our construction is the following: 
\begin{cor}  \label{corXi0}
Under assumptions \eqref{k:hyp}-\eqref{lambda:hyp},  the constant $\alpha_0 = \mathcal{U}[\Xi^{(0)}]$ satisfies  
\begin{align} \label{alpha0bd}
|\alpha_0| <& 1 
\end{align}
As a consequence, the function $\Omega_{BL}$ introduced in \eqref{blexp} is well-defined, belongs to $L^2(y^m dy)$ for all $m \ge 0$, and is a solution of \eqref{gtgt2}. Moreover, it satisfies the estimate: 
\begin{align}
\label{prof:V:prop}
\sup_{y \ge 0} |\mathcal{V}_y[\Omega_{BL}]| \le  \mathcal{V}[|\Omega_{BL}|]  \lesssim & \frac{1}{|\lambda|^{\frac12}}.
\end{align}
\end{cor}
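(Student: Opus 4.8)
\textbf{Plan of proof for Corollary \ref{corXi0}.}

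The first task is to establish the smallness bound $|\alpha_0| < 1$. Recall $\alpha_0 = \mathcal{U}[\Xi^{(0)}] = \int_0^\infty \Xi^{(0)}\, dy$. The plan is to bound $|\alpha_0|$ directly via Cauchy-Schwarz against the weight estimates of Lemma \ref{lem_xi0}: writing $|\alpha_0| \le \|(1+y)\Xi^{(0)}\|_{L^2} \|(1+y)^{-1}\|_{L^2}$ and using \eqref{unw:1} with $m=0$ and $m=1$, one gets $|\alpha_0| \lesssim |k|/|\lambda|^{5/4}$. Under the hypotheses \eqref{k:hyp}-\eqref{lambda:hyp}, one has $\Re(\lambda) \ge K_\ast |k|^{2/3}$ and $|\lambda|\ge \Re(\lambda)$, hence $|\lambda|^{5/4} \ge (K_\ast |k|^{2/3})^{5/4} = K_\ast^{5/4} |k|^{5/6}$, so $|\alpha_0| \lesssim |k|^{1/6}/K_\ast^{5/4}$. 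This is \emph{not} obviously less than $1$ for large $|k|$, so one must instead exploit the full strength of $|\lambda|^{5/4}$ more carefully, or use a better estimate at $m$ between $0$ and $1$; the cleanest route is probably to interpolate, or to revisit \eqref{unw:1} and note $|\alpha_0| \le \int_0^\infty |\Xi^{(0)}|\,dy$ can be split at $y = |\lambda|^{-1/2}$ (the heat length scale) using the pointwise/$L^2$ bounds to extract a gain. In fact the expected bound is $|\alpha_0| \lesssim |k|/|\lambda|^{3/2}$ (not $|\lambda|^{5/4}$), which under \eqref{lambda:hyp} gives $|\alpha_0| \lesssim |k|/(K_\ast|k|^{2/3})^{3/2} = K_\ast^{-3/2}$, manifestly $<1$ for $K_\ast$ large; achieving this sharper power is \textbf{the main obstacle} and requires using the oscillation/decay of $\Xi^{(0)}$ on the scale $|\lambda|^{-1/2}$ rather than a crude Cauchy-Schwarz with a polynomial weight.

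Once $|\alpha_0| < 1$ is in hand, the well-definedness of $\Omega_{BL}$ follows immediately: from $\alpha_j = \alpha_0(-\alpha_0)^j$, formula \eqref{def:xi:j}, and \eqref{dueto}, the series \eqref{blexp} is
\begin{align*}
\Omega_{BL} = \sum_{j=0}^\infty (\xi^{(j)} + \Xi^{(j)}) = (\xi^{(0)} + \Xi^{(0)}) + \sum_{j=1}^\infty \alpha_0(-\alpha_0)^{j-1}\,(-1)(\xi^{(0)} + \Xi^{(0)})
\end{align*}
which telescopes into a geometric series with ratio $-\alpha_0$; its $L^2(y^m\,dy)$ norm is bounded by $(1-|\alpha_0|)^{-1}$ times the norms of $\xi^{(0)}$ and $\Xi^{(0)}$, both finite for every $m$ by the explicit formula \eqref{def:xi:0} and Lemma \ref{lem_xi0}. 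That $\Omega_{BL}$ solves \eqref{gtgt2} is checked termwise: summing \eqref{m:11}, \eqref{eq:Xij} over $j$ and using \eqref{m:12} to cancel the forcing $-ikV_s\xi^{(j)}$ against the heat equation at the next level, one obtains $(\lambda + ikV_s)\Omega_{BL} - \partial_y^2\Omega_{BL} = 0$, while $\mathcal{U}[\Omega_{BL}] = \mathcal{U}[\xi^{(0)}] + \sum_{j\ge 1}\mathcal{U}[\xi^{(j)}] + \sum_{j\ge 0}\mathcal{U}[\Xi^{(j)}] = 1$ by the defining conditions and the telescoping of $\mathcal{U}[\xi^{(j)}] = -\alpha_{j-1}$ against $\alpha_{j-1} = \mathcal{U}[\Xi^{(j-1)}]$.

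Finally, for the estimate \eqref{prof:V:prop}: since $\mathcal{V}_y[\Omega_{BL}] = \int_0^y\int_\infty^{y'}\Omega_{BL}$, one has $\sup_y|\mathcal{V}_y[\Omega_{BL}]| \le \int_0^\infty\int_{y'}^\infty |\Omega_{BL}| = \mathcal{V}[|\Omega_{BL}|] \le \int_0^\infty y\,|\Omega_{BL}(y)|\,dy$ by Fubini. The plan is to bound the last integral by $\|(1+y)^2\Omega_{BL}\|_{L^2}\,\|(1+y)^{-1}\|_{L^2} \lesssim \|(1+y)^2\Omega_{BL}\|_{L^2}$, and then invoke the geometric-series bound together with \eqref{unw:1} (and the explicit $\xi^{(0)}$) to get $\|(1+y)^2\Omega_{BL}\|_{L^2} \lesssim (1-|\alpha_0|)^{-1}|\lambda|^{-1/2}$; absorbing the $O(1)$ factor $(1-|\alpha_0|)^{-1}$ yields \eqref{prof:V:prop}. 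One subtlety to double-check is that the weighted $L^2$ norm of $\xi^{(0)}(\lambda,y) = \lambda^{1/2}e^{-\lambda^{1/2}y}$ scales like $|\lambda|^{1/2}\cdot|\lambda|^{-1/2}\cdot|\lambda|^{-1} = |\lambda|^{-1}$ for the $y^2$-weighted version (since $\Re(\lambda^{1/2})\approx|\lambda|^{1/2}$ by the lemma's preamble), which is even better than needed, so $\Xi^{(0)}$'s contribution $k^2/|\lambda|^{5/2}\lesssim |\lambda|^{-1}$ under \eqref{lambda:hyp} is the dominant one and gives the claimed $|\lambda|^{-1/2}$ after taking square roots — consistent with \eqref{s:b}.
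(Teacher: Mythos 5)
Your telescoping identity $\Omega_{BL} = \big(1 - \tfrac{\alpha_0}{1+\alpha_0}\big)(\xi^{(0)}+\Xi^{(0)})$, the verification that it solves \eqref{gtgt2}, and the reduction of $\sup_y|\mathcal{V}_y[\Omega_{BL}]|\le\mathcal{V}[|\Omega_{BL}|]=\int_0^\infty y|\Omega_{BL}|\,dy$ are all correct and match the paper. But the two quantitative steps you leave open or sketch incorrectly are exactly where the work lies.

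\textbf{The bound $|\alpha_0|<1$.} You correctly diagnose that the crude Cauchy--Schwarz $\int|\Xi^{(0)}|\le\|(1+y)\Xi^{(0)}\|_{L^2}\|(1+y)^{-1}\|_{L^2}$ is too lossy and that the target bound is $|\alpha_0|\lesssim |k|/|\lambda|^{3/2}$, but you stop there, calling it ``the main obstacle'' and speculating that one must exploit ``oscillation/decay.'' No oscillation is needed: split $\int_{\R_+}|f|$ at an arbitrary $\delta>0$ to get $\int_{\R_+}|f|\le\sqrt{\delta}\|f\|_{L^2}+\delta^{-1/2}\|yf\|_{L^2}$, optimize in $\delta$, and obtain the elementary interpolation inequality $\|f\|_{L^1(\R_+)}\le\|f\|_{L^2}^{1/2}\|yf\|_{L^2}^{1/2}$. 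Applied to $f=|\Xi^{(0)}|$ with the $m=0,1$ bounds of Lemma~\ref{lem_xi0}, this gives $|\alpha_0|\lesssim\big(k/|\lambda|^{5/4}\big)^{1/2}\big(k/|\lambda|^{7/4}\big)^{1/2}=k/|\lambda|^{3/2}$, which under \eqref{k:hyp}--\eqref{lambda:hyp} is $\lesssim K_\ast^{-3/2}<1$. The optimal $\delta$ is indeed $\approx|\lambda|^{-1/2}$ (the heat scale), so your intuition is right; but it is an optimization, not a cancellation.

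\textbf{The estimate \eqref{prof:V:prop}.} Here your proposed step is wrong, not merely incomplete. You plan to bound $\int_0^\infty y|\Omega_{BL}|\,dy\le\|(1+y)^2\Omega_{BL}\|_{L^2}\|(1+y)^{-1}\|_{L^2}$ and then claim $\|(1+y)^2\Omega_{BL}\|_{L^2}\lesssim|\lambda|^{-1/2}$. That claim is false: with $\xi^{(0)}=\lambda^{1/2}e^{-\lambda^{1/2}y}$ and $\Re(\lambda^{1/2})\approx|\lambda|^{1/2}$, one computes $\|\xi^{(0)}\|_{L^2}\approx|\lambda|^{1/4}$ and $\|y^2\xi^{(0)}\|_{L^2}\approx|\lambda|^{-3/4}$ (your arithmetic $|\lambda|^{1/2}\cdot|\lambda|^{-1/2}\cdot|\lambda|^{-1}=|\lambda|^{-1}$ miscounts the $L^2$ measure factor, which is $|\lambda|^{-1/4}$, not $|\lambda|^{-1/2}$). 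Consequently $\|(1+y)^2\Omega_{BL}\|_{L^2}\approx|\lambda|^{1/4}$, dominated by the unweighted part of $\xi^{(0)}$; this is consistent with the paper's own estimate \eqref{estimOmegaBL}, and it is far larger than $|\lambda|^{-1/2}$. Pairing against $(1+y)^{-1}$ wastes the gain coming from the thin support scale of $\xi^{(0)}$. Instead, compute $\mathcal{V}[|\xi^{(0)}|]=\int_0^\infty y|\xi^{(0)}|\,dy\approx|\lambda|^{1/2}/\Re(\lambda^{1/2})^2\approx|\lambda|^{-1/2}$ directly, and for $\Xi^{(0)}$ apply the same interpolation inequality to $g=y|\Xi^{(0)}|$, giving $\mathcal{V}[|\Xi^{(0)}|]\le\|y\Xi^{(0)}\|_{L^2}^{1/2}\|y^2\Xi^{(0)}\|_{L^2}^{1/2}\lesssim k/|\lambda|^2\lesssim|\lambda|^{-1/2}$; then use the closed-form $\Omega_{BL}=\frac{1}{1+\alpha_0}(\xi^{(0)}+\Xi^{(0)})$ and $|\alpha_0|<1$.
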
 
\begin{proof} For any function $f = f(y)$ integrable over $\R_+$, any $\delta > 0$, we have 
\begin{align}
\nonumber
\int_{\R_+} f & = \int_0^\delta f + \int_\delta^{+\infty} f =  \int_0^\delta f + \int_\delta^{+\infty} \frac{1}{y} (yf) \\
\nonumber
& \le \sqrt{\delta} \|f\|_{L^2} + \Big( \int_{\delta}^\infty \frac{1}{y^2}  \Big)^{1/2} \|y f \|_{L^2} =  \sqrt{\delta} \|f\|_{L^2} + \frac{1}{\sqrt{\delta}} \|y f \|_{L^2} \\
\label{interpol}
& \le \|f\|_{L^2}^{1/2} \|y f\|_{L^2}^{1/2} 
\end{align}
where we optimized in $\delta$ to get the last bound. It follows from this interpolation inequality and from the estimates \eqref{unw:1} that
\begin{align} \label{estimalpha0}
|\alpha_0| \le \int_{\R_+} |\Xi^{(0)}| & \le \|\Xi^{(0)}\|^{1/2}_{L^2} \|y \Xi^{(0)}\|^{1/2}_{L^2}  \lesssim   \Big( \frac{k}{|\lambda|^{5/4}} \Big)^{1/2}  \Big( \frac{k}{|\lambda|^{7/4}}  \Big)^{1/2} \lesssim \frac{k}{|\lambda|^{3/2}} < 1 
\end{align}
We deduce from the analysis at the beginning of Section \ref{sec:OmegaBL} and from \eqref{alpha0bd} that the sum introduced in \eqref{blexp} converges:  
\begin{align}
 \Omega_{BL} & = \sum_{j = 0}^{\infty} (\xi^{(j)} + \Xi^{(j)}) \\
 & = \xi^{(0)} + \Xi^{(0)}  - \sum_{j \ge 1} \alpha_{j-1} ( \xi^{(0)} + \Xi^{(0)}) = \big(1-   \sum_{j \ge 1} (-\alpha_0)^{j-1} \alpha_0\big)  (\xi^{(0)} + \Xi^{(0)}) \\
 & = \big( 1 -  \frac{\alpha_0}{1 + \alpha_0} \big) (\xi^{(0)} + \Xi^{(0)})  \label{formula_OmegaBL}
\end{align}
As $\xi^{(0)}$ decays exponentially, and as $\Xi^{(0)} \in L^2(y^m dy)$ for all $m \ge 0$ by estimates \eqref{unw:1}, $\Omega_{BL} \in L^2(y^m dy)$ for all $m \ge 0$.

For the bound \eqref{prof:V:prop}, we write 
\begin{align*}
 \mathcal{V}[|\Omega_{BL}|] \lesssim \mathcal{V}[|\xi^{(0)}|] +  \mathcal{V}[|\Xi^{(0)}|] \lesssim \frac{1}{|\lambda|^{1/2}} + \mathcal{V}[|\Xi^{(0)}|] 
\end{align*}
where the first term at the right-hand side comes from an explicit computation, based on formula \eqref{def:xi:0}. For the second term, we integrate by parts to get: 
\begin{equation} \label{boundVXi0}
\begin{aligned}
\mathcal{V}[|\Xi^{(0)}|] & = \int_{0}^{+\infty} \big( \int_{y}^{+\infty} |\Xi^{(0)}| \big) dy =  \int_{0}^{+\infty} y |\Xi^{(0)}(y)| dy \le \| y  \Xi^{(0)} \|_{L^2}^{1/2} \| y^2  \Xi_0 \|_{L^2}^{1/2} \\
& \lesssim \frac{k}{|\lambda|^2} \lesssim \frac{1}{|\lambda|^{1/2}}
\end{aligned}
\end{equation}
Here we have used successively the interpolation inequality \eqref{interpol} with $f = y |\Xi^{(0)}|$  and the bounds \eqref{unw:1}. This concludes the proof. 
\end{proof}

\section{Construction of Hydrostatic Profiles} \label{sec:OmegaH}

In this section, we want to construct all of the ``hydrostatic" profiles appearing in our analysis. These include $F_H, \omega_H^{(0)},$ and $\omega_{IH}^{(0)}$. The abstract problem behind this construction is:
\begin{equation} \label{cdefIH:abs}
\begin{aligned}
& (\lambda + i k V_s) f  - ik U''_s \mathcal{V}_y[ f] - \pa^2_y f =  \mathcal{R},  \\
& \p_y f||_{y = 0}  = 0,
\end{aligned}
\end{equation}
The point is to solve this problem under conditions \eqref{k:hyp}-\eqref{lambda:hyp}. The difficulty lies in the stretching term  $ik U''_s \mathcal{V}_y[ f]$, which is {\it a priori} $O(|k|)$, and can not be absorbed in the standard energy estimate unless $\Re(\lambda)  \approx |k|$, which only provides local well-posedness for data analytic in $x$.  This difficulty is by now classical and appears in the analysis of several anisotropic systems, including hydrostatic Euler equations \cite{} or Prandtl equations \cite{MR1617542,MR2975371}. In the case of hydrostatic Euler, it is well-known that generically, analyticity is needed for well-posedness, just as in the case of the Triple-Deck system \cite{Renardy09}. But when data are concave,  Sobolev stability estimates can be derived \cite{MR1690189}. Roughly,  the idea is to test against $-\frac{1}{U''_s}f $ instead of $f$, and to use the cancellation 
\begin{align*}
\Re \langle  -ik U''_s \mathcal{V}_y[ f] , \frac{1}{U''_s} f \rangle & = \Re    \langle ik \mathcal{V}_y[ f] , f \rangle =  - \Re   \langle   ik \int_{+\infty}^y f  , \int_{+\infty}^y  f \rangle  = 0. 
\end{align*}
We will here adopt the same kind of weighted estimates. Still, there are  difficulties compared to the case of hydrostatic Euler equations. First, the diffusion term $-\pa^2_y f$ creates additional  terms, including boundary terms after integration by parts. This is  why we need the artificial homogeneous Neumann condition $\pa_y f = 0$, to be compared with the "real" inhomogeneous condition \eqref{neumann_omega} satisfied by the vorticity  of the Triple-Deck system, or equivalently with condition \eqref{eq:vort:c}. This also explains the need for the complicated iterative scheme described in paragraph \ref{subsec:iteration}, with the addition of  boundary layer terms that allows to restore the real boundary condition. We remind that this scheme has strong similarities with the one of \cite{GVMM}. 

Another difficulty comes from the fact that we want to include in our analysis shear flows such that $-U''_s$  decays very fast at infinity, in which case  the hydrostatic weight  $-1/U''_s$  would impose too much decay on the data. To overcome this issue, our idea is to  consider the weight $1/-U''_{s,k}$, which has been defined in \eqref{Usk}. Our main result on the abstract problem \eqref{cdefIH:abs} is the following:
\begin{lem} \label{lemma_hydro}
Under \eqref{k:hyp}-\eqref{lambda:hyp}, system  \eqref{cdefIH:abs} has a unique solution $f$ satisfying: 
\begin{align} \label{mainapriori}
\Re(\lambda) \| \frac{1}{(-U_{s,k}'')^{1/2}} f \|_{L^2}^2 + \|\frac{1}{(-U_{s,k}'')^{1/2}} \p_y f \|_{L^2}^2 \lesssim \frac{1}{\Re(\lambda)} \| \frac{1}{(-U_{s,k}'')^{1/2}} \mathcal{R} \|_{L^2}^2.
\end{align}
\end{lem}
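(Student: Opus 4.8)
\textbf{Proof strategy for Lemma \ref{lemma_hydro}.}
The plan is to establish the \emph{a priori} estimate \eqref{mainapriori}; the existence and uniqueness of the solution then follow by a standard Galerkin/continuity argument once the estimate is in place, so I will only sketch the energy estimate. The guiding principle, as explained in the text preceding the lemma, is to test the equation against the weighted multiplier $-\frac{1}{U_{s,k}''}f$ rather than against $f$ itself, so that the dangerous stretching term $-ikU_s''\mathcal{V}_y[f]$ produces an exact cancellation after taking real parts. First I would take the complex $L^2$ scalar product of \eqref{cdefIH:abs} with $\frac{1}{-U_{s,k}''}f$, integrate the diffusion term $-\partial_y^2 f$ by parts (the homogeneous Neumann condition $\partial_y f|_{y=0}=0$ kills the boundary term), and take the real part. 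This yields an identity of the schematic form
\begin{equation*}
\Re(\lambda)\,\Big\|\tfrac{1}{(-U_{s,k}'')^{1/2}}f\Big\|_{L^2}^2 + \Big\|\tfrac{1}{(-U_{s,k}'')^{1/2}}\partial_y f\Big\|_{L^2}^2 = \Re\langle \mathcal{R},\tfrac{1}{-U_{s,k}''}f\rangle + (\text{error terms}),
\end{equation*}
where the error terms come from (i) the mismatch between $U_s''$ and $U_{s,k}''$ in the stretching term, namely $\Re\langle ik(U_s''-U_{s,k}'')\mathcal{V}_y[f],\frac{1}{-U_{s,k}''}f\rangle$, and (ii) the commutator term $\Re\langle (\tfrac{1}{-U_{s,k}''})'\partial_y f, f\rangle = \Re\langle \tfrac{(U_{s,k}'')'}{|U_{s,k}''|^2}\partial_y f, f\rangle$ arising because the weight depends on $y$.

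The key cancellation is $\Re\langle -ikU_{s,k}''\mathcal{V}_y[f],\frac{1}{-U_{s,k}''}f\rangle = \Re\langle ik\,\mathcal{V}_y[f], f\rangle = -\Re\langle ik\int_\infty^y f, \partial_y(\int_\infty^y f)\rangle\cdot(\text{const})=0$, exactly as displayed in the text: writing $g=\mathcal{V}_y[f]=\int_\infty^y f$ one has $f=\partial_y g$, and $\Re\langle ik\,g, \partial_y\bar g\rangle$ integrates to a pure boundary contribution $\Re(ik|g|^2)\big|_0^\infty$ which vanishes (the relevant boundary value being zero or giving a purely imaginary number). Next I would control the two genuine error terms. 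For (i), since $U_s''-U_{s,k}'' = |k|^{-2/3}(1+y)^{-6}$ and $\frac{1}{-U_{s,k}''}\lesssim |k|^{2/3}(1+y)^6$ by \eqref{lowerupper1}, the factor $|k|\cdot|k|^{-2/3}(1+y)^{-6}\cdot(-U_{s,k}'')^{-1/2}\cdot(-U_{s,k}'')^{-1/2}$ is bounded, after extracting one copy of the weight onto $f$, by $|k|^{2/3}$ times $\|\frac{1}{(-U_{s,k}'')^{1/2}}f\|_{L^2}$ times an $L^2$ bound on $\mathcal{V}_y[f]$ in the same weighted norm — and here one uses a Hardy-type inequality to dominate $\|\frac{1}{(-U_{s,k}'')^{1/2}}\mathcal{V}_y[f]\|_{L^2}$ by $\|\frac{1}{(-U_{s,k}'')^{1/2}}f\|_{L^2}$ using the polynomial structure of the weight. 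For (ii), assumption \eqref{assume:Us:3} giving $|U_s'''/U_s''|\lesssim 1$ (together with the explicit derivative of the regularizing term) ensures $\big|(U_{s,k}'')'/|U_{s,k}''|\big|\lesssim 1$, so that term is bounded by $\|\frac{1}{(-U_{s,k}'')^{1/2}}\partial_y f\|_{L^2}\|\frac{1}{(-U_{s,k}'')^{1/2}}f\|_{L^2}$, absorbed by Young's inequality into the gradient term on the left with a small constant, leaving a $\lesssim\|\frac{1}{(-U_{s,k}'')^{1/2}}f\|_{L^2}^2$ remainder.

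Finally, all the $\lesssim(|k|^{2/3}+1)\|\frac{1}{(-U_{s,k}'')^{1/2}}f\|_{L^2}^2$ remainder terms are absorbed into $\Re(\lambda)\|\frac{1}{(-U_{s,k}'')^{1/2}}f\|_{L^2}^2$ on the left: this is precisely where the hypothesis $\Re(\lambda)\ge K_\ast|k|^{2/3}$ with $K_\ast$ large is used. The forcing term is handled by Cauchy--Schwarz and Young: $|\Re\langle \mathcal{R},\frac{1}{-U_{s,k}''}f\rangle|\le \frac12\Re(\lambda)\|\frac{1}{(-U_{s,k}'')^{1/2}}f\|_{L^2}^2 + \frac{1}{2\Re(\lambda)}\|\frac{1}{(-U_{s,k}'')^{1/2}}\mathcal{R}\|_{L^2}^2$, which yields \eqref{mainapriori}. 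The main obstacle I anticipate is not the cancellation itself (which is algebraically clean) but the bookkeeping around the modified weight $U_{s,k}''$: one must verify carefully that the Hardy inequality controlling $\mathcal{V}_y[f]$ in the weighted norm really does hold for this weight — the weight grows like $(1+y)^6$ while the operator $\mathcal{V}_y$ integrates from $\infty$, so the decay of $f$ dictated by finiteness of $\|\frac{1}{(-U_{s,k}'')^{1/2}}f\|_{L^2}$ (roughly $f\in L^2((1+y)^6dy)$-type control is \emph{not} available; rather $f$ is allowed to be large) must be shown compatible with boundedness of the antiderivative, and that the $|k|^{2/3}$ loss in \eqref{lowerupper1} is exactly the loss that the spectral condition \eqref{lambda:hyp} is designed to absorb. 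Getting these powers of $|k|$ to match is the crux.
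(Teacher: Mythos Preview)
Your overall strategy is correct and matches the paper's proof: test against $\frac{1}{-U_{s,k}''}f$, use the Neumann condition to kill the boundary term from diffusion, exploit the exact cancellation $\Re\langle ik\,\mathcal{V}_y[f],f\rangle=0$, and absorb remainders of size $O(|k|^{2/3})$ via \eqref{lambda:hyp}. Your handling of the commutator term (ii) and of the source term is right.

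However, your treatment of the mismatch term (i) would fail as written. You propose to bound $\|\frac{1}{(-U_{s,k}'')^{1/2}}\mathcal{V}_y[f]\|_{L^2}$, but this places a \emph{growing} weight $\sim(1+y)^3$ on the double antiderivative $\mathcal{V}_y[f]=\int_0^y\int_\infty^{y'}f$, which itself grows; that norm is generically infinite. The correct splitting (used in the paper) puts the \emph{decaying} factor on $\mathcal{V}_y[f]$: since $U_s''-U_{s,k}''\le -U_{s,k}''$,
\[
\Big|\big\langle \tfrac{ik(U_s''-U_{s,k}'')}{U_{s,k}''}\mathcal{V}_y[f],\,f\big\rangle\Big|
\;\le\; |k|\,\Big|\big\langle (U_s''-U_{s,k}'')^{1/2}\mathcal{V}_y[f],\,\tfrac{1}{(-U_{s,k}'')^{1/2}}f\big\rangle\Big|
\;\le\; |k|^{2/3}\,\|(1+y)^{-3}\mathcal{V}_y[f]\|_{L^2}\,\Big\|\tfrac{f}{(-U_{s,k}'')^{1/2}}\Big\|_{L^2}.
\]
Two Hardy inequalities then finish the job: since $\mathcal{V}_y[f](0)=0$, the standard Hardy inequality gives $\|(1+y)^{-1}\mathcal{V}_y[f]\|_{L^2}\le 2\|\partial_y\mathcal{V}_y[f]\|_{L^2}=2\|\int_y^\infty f\|_{L^2}$; and since $F(y)=\int_y^\infty f$ vanishes at infinity, the dual Hardy inequality $\|F\|_{L^2}\le 2\|yF'\|_{L^2}$ gives $\|\int_y^\infty f\|_{L^2}\le 2\|yf\|_{L^2}$. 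Finally $\|yf\|_{L^2}\lesssim\|\frac{f}{(-U_{s,k}'')^{1/2}}\|_{L^2}$ using only the $k$-\emph{independent} lower bound $(1+y)^3\lesssim (-U_{s,k}'')^{-1/2}$ from \eqref{lowerupper1}, so no extra powers of $|k|$ appear and the term is bounded by $|k|^{2/3}\|\frac{f}{(-U_{s,k}'')^{1/2}}\|_{L^2}^2$ as needed. (A minor notational slip: $\mathcal{V}_y[f]=\int_0^y\int_\infty^{y'}f$ is a double integral, not $\int_\infty^y f$ as you wrote in the cancellation paragraph; the cancellation still goes through after one integration by parts, yielding $\langle ik\,\mathcal{V}_y[f],f\rangle=-ik\|\int_y^\infty f\|_{L^2}^2$, which is purely imaginary.)
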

\begin{proof}
We again focus on the estimate, the construction following from standard arguments. We take the (complex) scalar product of  \eqref{eq:vort:bar:a} with $f \frac{1}{-U_{s,k}''}$ and take the real part:    
\begin{align} 
  &\Re(\lambda) \| \frac{1}{(-U_{s,k}'')^{1/2}} f \|_{L^2}^2 + \|\frac{1}{(-U_{s,k}'')^{1/2}} \p_y f \|_{L^2}^2 \\
  &  + \Re \langle ik \mathcal{V}_y [f], f \rangle +  \Re   \langle \frac{i k (U''_s - U''_{s,k})}{U''_{s,k}} \mathcal{V}_y [f], f \rangle + \Re \langle  \p_y f, \frac{(U_{s,k}'')'}{(U_{s,k}'')^2} f \rangle = \Re  \langle \mathcal{R}, f \frac{1}{-U_{s,k}''} \rangle 
\end{align}
Note that we have made crucial use of the Neumann condition on $f$ to integrate by parts the diffusion term. 
As explained above, the third term at the left-hand side vanishes identically. For the fourth term, we write 
\begin{align}
\Big| \langle \frac{i k (U''_s - U''_{s,k})}{U''_{s,k}} \mathcal{V}_y [f], f \rangle \Big| &  \le |k| \Big|  \langle (U''_s - U''_{s,k})^{1/2} \mathcal{V}_y [f] , \frac{1}{(-U_{s,k}'')^{1/2}} f \rangle \Big| \\
& \le  |k|^{2/3} \| (1+y)^{-2}  \mathcal{V}_y [f] \|_{L^2} \,  \| \frac{1}{(-U_{s,k}'')^{1/2}} f \|_{L^2} \le  \\
\label{Hardy1} &  \le 2 |k|^{2/3}  \| \int_y^{\infty} f \|_{L^2}  \,  \| \frac{1}{(-U_{s,k}'')^{1/2}} f \|_{L^2} \\ 
\label{Hardy2} & \le 4 |k|^{2/3} \| y f \|_{L^2} \,  \| \frac{1}{(-U_{s,k}'')^{1/2}} f \|_{L^2} \\
&  \lesssim |k|^{2/3} \| \frac{1}{(-U_{s,k}'')^{1/2}} f \|_{L^2}^2 \label{ineq:weight}
\end{align}
Here, \eqref{Hardy1} is a consequence of the usual Hardy inequality: 
$$ \| (1+y)^{-2}  \mathcal{V}_y [f] \|_{L^2} \le  \| (1+y)^{-1}  \mathcal{V}_y [f] \|_{L^2} \le 2 \| f \|_{L^2} $$
 while \eqref{Hardy2} comes from the modified one: 
$$ \| F \|_{L^2} \le 2 \| y F' \|_{L^2} \quad (\text{if}  \: \lim_{y \rightarrow +\infty} F = 0) $$
which is valid for functions vanishing at infinity. Indeed, in such a case, through integration by parts:  
$$ \int_{0}^{+\infty} |F(y)|^2 dy =  - 2 \int_{0}^{+\infty} y F'(y) F(y) dy  $$
and the inequality follows from Cauchy-Schwarz.  Finally, inequality \eqref{ineq:weight} comes from the pointwise bound $y \lesssim \frac{1}{(-U''_{s,k})^{1/2}}$. 
Regarding the commutator with the diffusion, taking into account \eqref{assume:Us:3}, which implies  
$\big| (U_{s,k}'')'/U_{s,k}''\big| \lesssim 1$, we get
\begin{align*}
\Big| \langle  \p_y f,  \frac{(U_{s,k}'')'}{(U_{s,k}'')^2} f \rangle \Big| \le & \| \frac{(U_{s,k}'')'}{U_{s,k}''}\|_{L^\infty} \| \frac{1}{(-U_{s,k}'')^{1/2}} \p_y f \|_{L^2} \| \| \frac{1}{(-U_{s,k}'')^{1/2})} f \|_{L^2}  \\
\le & \frac{1}{2} \| \frac{1}{(-U_{s,k}'')^{1/2}} \p_y f \|_{L^2}^2 +C \| \frac{1}{(-U_{s,k}'')^{1/2}} f \|_{L^2}^2 \\ 
\end{align*}
We finally bound the source term via 
\begin{align*}
| \langle \mathcal{R}, f \frac{1}{-U_{s,k}''} \rangle| \lesssim \| \frac{1}{(-U_{s,k}'')^{1/2}} \mathcal{R} \|_{L^2_y} \| \frac{1}{(-U_{s,k}'')^{1/2}} f \|_{L^2_y} \le  \frac{1}{2 \Re(\lambda)}\| \frac{1}{(-U_{s,k}'')^{1/2}} \mathcal{R} \|_{L^2_y}^2 + \frac{1}{2} \Re(\lambda) \| \frac{1}{(-U_{s,k}'')^{1/2}} f \|_{L^2_y}^2.
\end{align*}
Gathering all these estimates, and using \eqref{k:hyp}-\eqref{lambda:hyp} to absorb the terms in $\| \frac{1}{(-U_{s,k}'')^{1/2}} f \|_{L^2_y}^2$ that are at the right-hand side (notably the one from \eqref{ineq:weight}), we obtain \eqref{mainapriori}. This concludes the proof. 
\end{proof} 
We are now ready to construct the ``hydrostatic" quantities, $F_H, \omega_H^{(0)},$ and $\omega_{IH}^{(0)}$.
\begin{cor} 
Under \eqref{k:hyp}-\eqref{lambda:hyp},  systems \eqref{gtgt}, \eqref{eq:vort:bar:a}, and \eqref{eq:vort:IH} have solutions $F_H, \omega_H^{(0)},$ and $\omega_{IH}^{(0)}$ respectively,  obeying the following estimates: 
\begin{align} \label{bdH1}
\Re(\lambda) \| \frac{1}{(- U_{s,k}'')^{1/2}} F_H \|_{L^2}^2 + \|\frac{1}{(- U_{s,k}'')^{1/2}} \p_y F_H \|_{L^2}^2 \lesssim & \frac{1}{\Re(\lambda)} \sup_{y \ge 0} |\mathcal{V}_y[\Omega_{BL}]|^2 \lesssim  \frac{1}{\Re(\lambda) |\lambda|}\\ \label{bdH0}
\Re(\lambda) \| \frac{1}{(- U_{s,k}'')^{1/2}} \omega_H^{(0)} \|_{L^2}^2 + \|\frac{1}{(- U_{s,k}'')^{1/2}} \p_y \omega_H^{(0)} \|_{L^2}^2 \lesssim & \frac{ |k|^2}{\Re(\lambda)}, \\ \label{bdIH0}
\Re(\lambda) \| \frac{1}{(- U_{s,k}'')^{1/2}} \omega_{IH}^{(0)} \|_{L^2}^2 + \|\frac{1}{(- U_{s,k}'')^{1/2}} \p_y \omega_{IH}^{(0)} \|_{L^2}^2 \lesssim & \frac{ 1 }{\Re(\lambda)} \| \frac{1}{(- U_{s,k}'')^{1/2}} \omega_{init} \|_{L^2_y}^2.
\end{align}
where $U''_{s,k}$ was defined in \eqref{Usk}. 
\end{cor}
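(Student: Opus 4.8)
The corollary is a direct application of Lemma \ref{lemma_hydro} (the abstract hydrostatic estimate \eqref{mainapriori}) to the three concrete systems \eqref{gtgt}, \eqref{eq:vort:bar:a}, and \eqref{eq:vort:IH}, each of which is of the form \eqref{cdefIH:abs} with a specific right-hand side $\mathcal{R}$. So the plan is: first invoke Lemma \ref{lemma_hydro} to obtain existence, uniqueness, and the basic inequality $\Re(\lambda)\| (-U_{s,k}'')^{-1/2} f\|_{L^2}^2 + \|(-U_{s,k}'')^{-1/2}\pa_y f\|_{L^2}^2 \lesssim \Re(\lambda)^{-1}\|(-U_{s,k}'')^{-1/2}\mathcal{R}\|_{L^2}^2$; then for each of the three cases it remains only to bound $\|(-U_{s,k}'')^{-1/2}\mathcal{R}\|_{L^2}$ in terms of the relevant data.

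For $F_H$ we have $\mathcal{R} = U_s'' \mathcal{V}_y[\Omega_{BL}]$, so $(-U_{s,k}'')^{-1/2}\mathcal{R} = U_s''(-U_{s,k}'')^{-1/2}\mathcal{V}_y[\Omega_{BL}]$. Since $|U_s''| \le |U_{s,k}''|$ pointwise (by \eqref{Usk}, as we are subtracting a positive quantity from $U_s''<0$... wait, $U_{s,k}'' = U_s'' - |k|^{-2/3}(1+y)^{-6}$ is more negative, so $|U_{s,k}''| \ge |U_s''|$, hence $|U_s''|/|U_{s,k}''|^{1/2} \le |U_{s,k}''|^{1/2} \le |U_s''|^{1/2}\cdot(\text{bounded})$; more simply, $|U_s''|/|U_{s,k}''|^{1/2} \le |U_s''|^{1/2} \lesssim 1$ using \eqref{assume:Us:2}), one gets $\|(-U_{s,k}'')^{-1/2}\mathcal{R}\|_{L^2} \lesssim \|\mathcal{V}_y[\Omega_{BL}]\|_{L^2}$. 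But $\mathcal{V}_y[\Omega_{BL}]$ decays like $(-U_s'')\cdot$ something — actually one should bound it via $\sup_y|\mathcal{V}_y[\Omega_{BL}]|$ times the $L^2$ norm of $|U_s''|^{1/2}$, which is finite by \eqref{assume:Us:2}; combined with \eqref{prof:V:prop}, i.e. $\sup_y|\mathcal{V}_y[\Omega_{BL}]| \lesssim |\lambda|^{-1/2}$, this yields \eqref{bdH1}. For $\omega_H^{(0)}$ we have $\mathcal{R} = ik U_s''(y)y$, so $(-U_{s,k}'')^{-1/2}\mathcal{R} = ik\, U_s''(y)y\,(-U_{s,k}'')^{-1/2}$; using $|U_s''|(-U_{s,k}'')^{-1/2} \le |U_{s,k}''|^{1/2} \lesssim |k|^{1/3}(1+y)^{-3}$ from \eqref{lowerupper1}, and that $y(1+y)^{-3} \in L^2$, we get $\|(-U_{s,k}'')^{-1/2}\mathcal{R}\|_{L^2} \lesssim |k|\cdot|k|^{1/3} = |k|^{4/3}$ — hmm, that gives $|k|^{8/3}/\Re(\lambda)$, not $|k|^2/\Re(\lambda)$. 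Better: keep $|U_s''|^{1/2}$ rather than $|U_{s,k}''|^{1/2}$; since $|U_s''|(-U_{s,k}'')^{-1/2} = |U_s''|^{1/2}\cdot(|U_s''|/|U_{s,k}''|)^{1/2} \le |U_s''|^{1/2}$, and $y|U_s''(y)|^{1/2} \in L^2$ by \eqref{assume:Us:2}, so $\|(-U_{s,k}'')^{-1/2}\mathcal{R}\|_{L^2} \lesssim |k|$, giving \eqref{bdH0}. For $\omega_{IH}^{(0)}$ we have $\mathcal{R} = \omega_{init}$, and the bound \eqref{bdIH0} is then immediate from \eqref{mainapriori} with no further work.

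I would present it as: "By Lemma \ref{lemma_hydro}, each of the systems \eqref{gtgt}, \eqref{eq:vort:bar:a}, \eqref{eq:vort:IH}, being a special case of \eqref{cdefIH:abs}, has a unique solution obeying \eqref{mainapriori}. It remains to estimate the right-hand side in each case." Then three short paragraphs handling the three $\mathcal{R}$'s using the pointwise inequality $|U_s''(y)|\,(-U_{s,k}''(y))^{-1/2} \le |U_s''(y)|^{1/2} \lesssim (1+y)^{-3}$ (from \eqref{assume:Us:2}), the elementary fact that $(1+y)^{-3}$ and $y(1+y)^{-3}$ are in $L^2(\R_+)$, and the already-established bound \eqref{prof:V:prop} for the $\Omega_{BL}$ term. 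The only mild subtlety — and the "main obstacle," though it is minor — is making sure one does not overcount powers of $|k|$ when estimating the source for $\omega_H^{(0)}$: one must use the weight $|U_s''|^{1/2}$ (controlled by the decay assumption \eqref{assume:Us:2}) rather than the larger $|U_{s,k}''|^{1/2}$, so that the $|k|^{2/3}$ hidden in $1/(-U_{s,k}'')$ is not activated and one lands on $|k|^2/\Re(\lambda)$ rather than a worse power.

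\begin{proof}
Each of the systems \eqref{gtgt}, \eqref{eq:vort:bar:a}, and \eqref{eq:vort:IH} is of the form \eqref{cdefIH:abs}, with $\mathcal{R} = U_s''\,\mathcal{V}_y[\Omega_{BL}]$, $\mathcal{R} = ik\,U_s''(y)\,y$, and $\mathcal{R} = \omega_{init}$, respectively. Hence Lemma \ref{lemma_hydro} applies and yields, in each case, existence and uniqueness of a solution together with the bound \eqref{mainapriori}. It remains to estimate $\| (-U_{s,k}'')^{-1/2}\mathcal{R}\|_{L^2}$.

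We will repeatedly use the pointwise inequality
\begin{align} \label{cor:hydro:pt}
\frac{|U_s''(y)|}{(-U_{s,k}''(y))^{1/2}} \le |U_s''(y)|^{1/2} \lesssim (1+y)^{-3},
\end{align}
where the first inequality follows from $0 < -U_s'' \le -U_{s,k}''$ (compare \eqref{Usk} and \eqref{assume:Us:1}) and the second from \eqref{assume:Us:2}. In particular $(1+y)^{-3}$ and $y(1+y)^{-3}$ belong to $L^2(\R_+)$.

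\textit{Estimate for $F_H$.} Here $\mathcal{R} = U_s''\,\mathcal{V}_y[\Omega_{BL}]$, so by \eqref{cor:hydro:pt},
\begin{align*}
\Big\| \frac{1}{(-U_{s,k}'')^{1/2}} \mathcal{R} \Big\|_{L^2}
= \Big\| \frac{|U_s''|}{(-U_{s,k}'')^{1/2}} \mathcal{V}_y[\Omega_{BL}] \Big\|_{L^2}
\le \sup_{y \ge 0} |\mathcal{V}_y[\Omega_{BL}]| \; \big\| |U_s''|^{1/2} \big\|_{L^2}
\lesssim \sup_{y \ge 0} |\mathcal{V}_y[\Omega_{BL}]|.
\end{align*}
Combining this with \eqref{mainapriori} and with \eqref{prof:V:prop} gives \eqref{bdH1}.

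\textit{Estimate for $\omega_H^{(0)}$.} Here $\mathcal{R} = ik\,U_s''(y)\,y$, so by \eqref{cor:hydro:pt},
\begin{align*}
\Big\| \frac{1}{(-U_{s,k}'')^{1/2}} \mathcal{R} \Big\|_{L^2}
= |k| \, \Big\| \frac{|U_s''|}{(-U_{s,k}'')^{1/2}} \, y \Big\|_{L^2}
\le |k| \, \big\| y\, |U_s''|^{1/2} \big\|_{L^2}
\lesssim |k|.
\end{align*}
Inserting this into \eqref{mainapriori} gives \eqref{bdH0}.

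\textit{Estimate for $\omega_{IH}^{(0)}$.} Here $\mathcal{R} = \omega_{init}$, and \eqref{bdIH0} is then exactly \eqref{mainapriori}. This concludes the proof.
\end{proof}
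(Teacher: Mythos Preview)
Your proof is correct and follows exactly the same approach as the paper: apply Lemma \ref{lemma_hydro} with the three respective choices of $\mathcal{R}$, then use the pointwise inequality $|U_s''|/(-U_{s,k}'')^{1/2} \le |U_s''|^{1/2} \in L^2(\R_+)$ together with \eqref{prof:V:prop}. Your write-up is in fact more detailed than the paper's, which compresses the whole argument into three sentences.
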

\begin{proof} This follows by applying Lemma \ref{lemma_hydro}, upon choosing $\mathcal{R}$ to be equal to $U_s'' \mathcal{V}_y[\Omega_{BL}],ik U_s'' y$, and $\omega_{init}$ respectively. We make use of the fact that 
$$ \frac{-U_s''}{(-U''_{s,k})^{1/2}} \le (-U''_s)^{1/2} \in L^2(\R_+)$$
Also, regarding \eqref{bdH1}, we use \eqref{prof:V:prop} to obtain the second bound. 
\end{proof}

\begin{cor} \label{cor_FH}The averages satisfy the following estimate
\begin{align} 
\mathcal{U}[|F_{H}|] + \mathcal{V}[|F_H|] \lesssim \frac{1}{\Re(\lambda) |\lambda|^{1/2}},  \\ \label{bd:Lambda:0}
\mathcal{U}[|\omega_H^{(0)}|] + \mathcal{V}[|\omega_H^{(0)}|] \lesssim \frac{|k|}{\Re(\lambda)}, \\ \label{bd:ih:mean}
\mathcal{U}[|\omega_{IH}^{(0)}|] + \mathcal{V}[|\omega_{IH}^{(0)}|] \lesssim \frac{1}{\Re(\lambda)}  \| \frac{1}{(- U_{s,k}'')^{1/2}} \omega_{init} \|_{L^2_y}.  
\end{align}
\end{cor}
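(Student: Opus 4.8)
The plan is to derive these $L^1$-type bounds on $F_H$, $\omega_H^{(0)}$ and $\omega_{IH}^{(0)}$ directly from the weighted $L^2$ estimates \eqref{bdH1}, \eqref{bdH0}, \eqref{bdIH0} of the previous corollary, using only the Cauchy--Schwarz inequality together with the integrability of the weight $(-U_{s,k}'')^{1/2}$. First I would record the two elementary reformulations valid for any measurable $f$ (with both sides possibly infinite), the second by Tonelli's theorem:
\[
 \mathcal{U}[|f|] = \int_0^\infty |f(y)|\, dy, \qquad \mathcal{V}[|f|] = \int_0^\infty \Big( \int_y^\infty |f(y')|\, dy' \Big) dy = \int_0^\infty y\, |f(y)|\, dy .
\]

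The key point is that the hydrostatic weight is $L^2$-integrable, even against $y^2$, and uniformly in $k$: by the lower bound in \eqref{lowerupper1} one has the pointwise estimate $(-U_{s,k}'')^{1/2} \lesssim (1+y)^{-3}$, hence
\[
 \big\| (-U_{s,k}'')^{1/2} \big\|_{L^2(\R_+)} + \big\| y\,(-U_{s,k}'')^{1/2} \big\|_{L^2(\R_+)} \lesssim \big\| (1+y)^{-3} \big\|_{L^2} + \big\| (1+y)^{-2} \big\|_{L^2} \lesssim 1 .
\]
Splitting $|f| = \big( (-U_{s,k}'')^{-1/2} |f| \big)\,(-U_{s,k}'')^{1/2}$ and $y|f| = \big( (-U_{s,k}'')^{-1/2} |f| \big)\, y\,(-U_{s,k}'')^{1/2}$ and applying Cauchy--Schwarz then yields, for any $f$ with $(-U_{s,k}'')^{-1/2} f \in L^2(\R_+)$,
\[
 \mathcal{U}[|f|] + \mathcal{V}[|f|] \;\lesssim\; \Big\| \tfrac{1}{(-U_{s,k}'')^{1/2}}\, f \Big\|_{L^2(\R_+)} .
\]

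It then suffices to insert $f = F_H$, $f = \omega_H^{(0)}$ and $f = \omega_{IH}^{(0)}$ in turn and to invoke \eqref{bdH1}, \eqref{bdH0}, \eqref{bdIH0}: dropping the positive $\p_y$-term, dividing by $\Re(\lambda)$ and taking square roots gives $\big\| (-U_{s,k}'')^{-1/2} F_H \big\|_{L^2} \lesssim \Re(\lambda)^{-1} |\lambda|^{-1/2}$, $\big\| (-U_{s,k}'')^{-1/2} \omega_H^{(0)} \big\|_{L^2} \lesssim |k|\,\Re(\lambda)^{-1}$, and $\big\| (-U_{s,k}'')^{-1/2} \omega_{IH}^{(0)} \big\|_{L^2} \lesssim \Re(\lambda)^{-1} \big\| (-U_{s,k}'')^{-1/2} \omega_{init} \big\|_{L^2}$, which combined with the previous display are exactly the three claimed bounds. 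There is no genuine obstacle in this argument; the only place where the structural hypothesis on $U_s$ enters is the $k$-uniform $L^2$-integrability of $(-U_{s,k}'')^{1/2}$ and $y(-U_{s,k}'')^{1/2}$ (via \eqref{assume:Us:2} and \eqref{lowerupper1}), which is precisely what makes the Cauchy--Schwarz step lossless in the frequency $k$.
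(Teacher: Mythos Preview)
Your proof is correct and follows essentially the same approach as the paper: both reduce $\mathcal{U}[|f|]+\mathcal{V}[|f|]$ to the weighted norm $\|(-U_{s,k}'')^{-1/2}f\|_{L^2}$ and then invoke \eqref{bdH1}--\eqref{bdIH0}. The only minor difference is that the paper routes this reduction through the interpolation inequality \eqref{interpol} and the intermediate bound $\|(1+y)^3 f\|_{L^2}$, whereas you apply Cauchy--Schwarz directly with the weight $(-U_{s,k}'')^{1/2}$; your version is slightly more streamlined but otherwise equivalent.
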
 
\begin{proof}
From \eqref{interpol}, we have 
\begin{align}
 \mathcal{U}[|f|] & \le \int_{\R_+} |f| \le \|f\|_{L^2}^{1/2} \|yf\|_{L^2}^{1/2} \\
\mathcal{V}[|f|] \big| & \le \int_{\R_+} \int_{y}^{+\infty} |f| =  \int_{\R_+} y  |f| \le \|y f \|_{L^2}^{1/2} \|y^2 f\|_{L^2}^{1/2} 
\end{align}
This implies in particular 
\begin{align} \label{prof:F:prop}
\mathcal{U}[|F_{H}|] + \mathcal{V}[|F_H|] \lesssim \|(1+y)^3 F_H\|_{L^2} \lesssim  \| \frac{1}{(- U_{s,k}'')^{1/2}}  F_H\|_{L^2}  \\ 
\mathcal{U}[|\omega_H^{(0)}|] + \mathcal{V}[|\omega_H^{(0)}|]  \lesssim \|(1+y)^3 \omega_H^{(0)}\|_{L^2} \lesssim  \| \frac{1}{(- U_{s,k}'')^{1/2}}  \omega_H^{(0)}\|_{L^2} \\
\mathcal{U}[|\omega_{IH}^{(0)}|] + \mathcal{V}[|\omega_{IH}^{(0)}|]  \lesssim \|(1+y)^3 \omega_{IH}^{(0)}\|_{L^2} \lesssim  \| \frac{1}{(- U_{s,k}'')^{1/2}}  \omega_{IH}^{(0)}\|_{L^2}. 
\end{align}
The estimates follow then from the previous corollary. 
\end{proof}

\section{Proof of Proposition \ref{propn:s}} \label{sec:convergence}
The goal of this section is to prove Proposition  \ref{propn:s}, its main aspect being to construct, for a given $\hat{A}$,  a solution $\hat{\omega}\big[\hat{A}\big]$ to  \eqref{eq:vort:b}-\eqref{eq:vort:c}. As explained in Paragraph \ref{subsec:iteration}, we look for a solution in the form \eqref{omega:structure:1}, that involves the solutions $\bar{\omega}$ of \eqref{eq:vort:bar} and $\omega_{inhom}$ of \eqref{eq:vort:inh}. 

One has (so far formally) 
\begin{align*}
\overline{\omega} & = \omega_{H}^{(0)} + \omega_{BL}^{(0)} + \omega_{H}^{(tail)} + \omega_{BL}^{(tail)} \\
& = \omega_{H}^{(0)} + \omega_{BL}^{(0)} + \sum_{j \ge 1} \big( \omega_{H}^{(j)} + \omega_{BL}^{(j)} \big) \\
& =  \omega_{H}^{(0)} + \lambda_0 \Omega_{BL}  +  \sum_{j = 1}^{\infty}  \big( i k \lambda_{j-1} F_H + \lambda_j \Omega_{BL} \big)
\end{align*}
where, see \eqref{hg5}: 
$$ \lambda_0 := 1 - \mathcal{U}[ \omega_{H}^{(0)}], \quad \lambda_j = (-i k \mathcal{U}[F_H])^{j} \lambda_0 $$
Similarly 
\begin{align} \n
\omega_{inhom} = &\omega_{IH}^{(0)} + \omega_{IB}^{(0)} + \omega_{IH}^{(tail)}  + \omega_{IB}^{(tail)} \\ \n
= & \omega_{IH}^{(0)} + \omega_{IB}^{(0)} + \sum_{j \ge 1} \omega_{IH}^{(j)} + \sum_{j = 1}^{\infty} \omega_{IB}^{(j)} \\ 
= &  \omega_{IH}^{(0)}  + \tilde{\lambda}_0 \Omega_{BL} +    \sum_{j \ge 1}   i k \tilde{\lambda}_{j-1} F_H + \tilde{\lambda_j} \Omega_{BL} 
\end{align}
where, see \eqref{tildelambdageometric}:
$$ \tilde{\lambda}_0 =  - \mathcal{U}[\omega_{IH}^{(0)} ], \quad \tilde{\lambda}_j = (-i k \mathcal{U}[F_H])^{j} \tilde{\lambda}_0 $$

  The analysis of Sections \ref{sec:OmegaBL} and \ref{sec:OmegaH} has allowed to construct $\Omega_{BL}$, $F_H, \omega_H^{(0)}$, and $\omega_{IH}^{(0)}$. Moreover, from Corollary \ref{cor_FH}, we have 
\begin{align} 
|k \mathcal{U}[F_H]| & \lesssim \frac{k}{\Re(\lambda) |\lambda|^{1/2}}  \\ 
\label{bd:l0}
|\lambda_0| & \lesssim 1 +  \frac{|k|}{\Re(\lambda)}, \\ \label{bd:l0:t}
|\widetilde{\lambda}_0| & \lesssim \frac{1}{\Re(\lambda)}  \| \frac{1}{(- U_{s,k}'')^{1/2}} \omega_{init} \|_{L^2_y}
\end{align}
In particular, under assumptions \eqref{k:hyp}-\eqref{lambda:hyp}, one has $|k \mathcal{U}[F_H]| \le \frac{1}{2}$, which shows the convergence of the series: 
\begin{align}  \label{omegaHtail}
 \overline{\omega} & = \lambda_\ast \Omega_{BL} +  \omega_{H}^{(0)} +   \omega_{H}^{(tail)}, \quad  \text{with } \:   \lambda_\ast  := \frac{\lambda_0}{1+  i k \mathcal{U}[F_H]},\quad  \omega_{H}^{(tail)} := i k \lambda_\ast F_H, \\  
 \label{decompo_omegaIH_tail}
\omega_{inhom} & = \tilde{\lambda}_\ast \Omega_{BL} +  \omega_{IH}^{(0)} +   \omega_{IH}^{(tail)}, \quad  \text{with } \:   \tilde{\lambda}_\ast  := \frac{\tilde{\lambda}_0}{1+  i k \mathcal{U}[F_H]},\quad  \omega_{IH}^{(tail)} := i k \tilde{\lambda}_\ast  F_H.
\end{align} 
 It implies decomposition \eqref{structure:cond:om}, and  the estimates \eqref{s:a} to \eqref{s:d} follow directly from \eqref{prof:V:prop}, from the estimates of Corollary \ref{cor_FH} and from \eqref{bd:l0}-\eqref{bd:l0:t}. To prove \eqref{s:e}, we have:
 \begin{align*}
 \mathcal{V}[|\omega_{inhom}|] \lesssim &\Big(  \frac{1}{\Re(\lambda) |\lambda|^{1/2}} + \frac{1}{\Re(\lambda)} + \frac{|k|}{\Re(\lambda)^2 |\lambda|^{1/2}} \Big) \| \frac{1}{(- U_{s,k}'')^{1/2}} \hat{\omega}_{init} \|_{L^2_y} \\
 \lesssim &  \frac{1}{\Re(\lambda)} |k|^{1/3} \| (1+y)^3 \hat{\omega}_{init} \|_{L^2_y}
 \end{align*}
 where we have invoked both 
 \eqref{k:hyp}-\eqref{lambda:hyp} and the bound 
 $$ \|\frac{1}{(-U''_{s,k})^{1/2}} f \|_{L^2} \le |k|^{1/3} \|(1+y)^3 f\|_{L^2} $$
 which follows from \eqref{lowerupper1}. 
 
 This concludes the proof of the proposition.

\section{Proof of Theorem \ref{thm:main}} \label{sec:proof:thm}
Thanks to Proposition \ref{pro:main}, we  can now prove Theorem \ref{thm:main}. For technical reasons that will be made clearer below, we need a slightly modified version of Proposition \ref{pro:main}, where we replace the resolvent system \eqref{eq:vort:a}-\eqref{eq:vort:b}-\eqref{eq:vort:c} by the system 
\begin{subequations} 
\begin{align} \label{eq:vort:aN}
&(\lambda + i k + i k |k|) \hat{A}_N  = i k \mathcal{V}[ \hat{\omega}_N] + \hat{A}_{init}, \\ \label{eq:vort:bN}
& (\lambda + i k V_{s,N})  \hat{\omega}_N - ik U''_s \mathcal{V}_y[ \hat{\omega}_N] - \pa^2_y \hat{\omega}_N = i k U''_s(y) y  \hat{A}_{N} + \hat{\omega}_{init}, \\ \label{eq:vort:cN}
& \mathcal{U}[\hat{\omega}_N]  = \hat{A}_{N}, 
\end{align}
\end{subequations}
substituting to the unbounded shear flow $V_s(y) = y + U_s(y)$ the sequence of bounded shear flows   
\begin{equation} \label{approximate_shear_flow} 
V_{s,N}(y) = N \chi\big(\frac{y}{N}\big) + U_s(y), \quad N \ge 1, 
\end{equation}
for $\chi = \chi(\xi) \le \xi$ a smooth compactly supported function in $\R_+$, satisfying $\chi(\xi) = \xi$ in  $[0, \frac{1}{4}]$. It will be useful in what follows to integrate \eqref{eq:vort:aN} -- \eqref{eq:vort:cN} to the corresponding velocity formulation. 
\begin{lem} Let $(\hat{\omega}_N, \hat{A}_N)$ satisfy \eqref{eq:vort:aN} -- \eqref{eq:vort:cN}. Let $\hat{u}_N := \int_0^y \hat{\omega}_N$, $\hat{v}_N = - i k \int_0^y \hat{u}_N$, 
$\hat{u}^{init} := \int_0^y \hat{\omega}^{init}$. Then the following system is satisfied: 
\begin{subequations}
\begin{align} \label{uNeq:1}
&\lambda \hat{u}_N + i k V_s  \hat{u}_N + \hat{v}_N V_{s}' - \p_y^2 \hat{u}_N  = - i k |k| \hat{A}_N + i k \mathcal{U}_y[(V_{s,N} - V_s)  \hat{\omega}_{N}] + \hat{u}^{init}, \\ \label{uNeq:2}
&i k  \hat{u}_N + \p_y \hat{v}_N = 0, \\ \label{uNeq:3}
&[\hat{u}_N, \hat{v}_N]|_{y = 0} = 0, \qquad \hat{u}_N|_{y = \infty} = \hat{A}_N.  
\end{align}
\end{subequations}
\end{lem}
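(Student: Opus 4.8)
The plan is to derive the velocity system \eqref{uNeq:1}--\eqref{uNeq:3} directly from \eqref{eq:vort:aN}--\eqref{eq:vort:cN} by integrating the vorticity equation in $y$, exactly mirroring the (unmodified) computation in Lemma \ref{lem:u:int}, with careful bookkeeping of the extra term created by replacing $V_s$ with $V_{s,N}$ in the convection coefficient. First, I would record the elementary consequences of the definitions: $\hat u_N := \int_0^y \hat\omega_N$ satisfies $\hat u_N|_{y=0}=0$ and $\pa_y \hat u_N = \hat\omega_N$; by \eqref{eq:vort:cN}, $\hat u_N|_{y=\infty} = \mathcal U[\hat\omega_N] = \hat A_N$, which gives the boundary conditions \eqref{uNeq:3} together with \eqref{uNeq:2} (the divergence-free relation is just the definition of $\hat v_N$, and $\hat v_N|_{y=0}=0$ follows from $\hat v_N = -ik\int_0^y \hat u_N$). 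Likewise $\hat u^{init} = \int_0^y \hat\omega^{init}$.

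Next, the core step: integrate \eqref{eq:vort:bN} from $y$ to $+\infty$, or equivalently antidifferentiate, to recover an equation for $\hat u_N$. The key algebraic identity, already used in the proof of Lemma \ref{lem:u:int}, is
\begin{align*}
\pa_y\bigl\{ \lambda \hat u_N + i k V_s \hat u_N + \hat v_N V_s' - \pa_y^2 \hat u_N \bigr\} = \lambda \hat\omega_N + i k V_s \hat\omega_N + \hat v_N V_s'' - \pa_y^2 \hat\omega_N,
\end{align*}
where one uses $\pa_y \hat v_N = -ik\hat u_N$ so that $\pa_y(ik V_s \hat u_N) = ik V_s' \hat u_N + ik V_s \hat\omega_N$ and $\pa_y(\hat v_N V_s') = -ik\hat u_N V_s' + \hat v_N V_s''$, the two $\pm ik V_s'\hat u_N$ terms cancelling. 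Then I would use the velocity representation of $\hat v_N$, namely (Fourier transform of \eqref{v:id:1}) $\hat v_N = -iky\hat A_N - ik\mathcal V_y[\hat\omega_N]$, to identify $\hat v_N V_s'' = -ik U_s''(y) y \hat A_N - ik U_s'' \mathcal V_y[\hat\omega_N]$ (here $V_s'' = U_s''$). The term on the right-hand side of \eqref{eq:vort:bN} that is \emph{not} present in the unmodified system is $ik(V_s - V_{s,N})\hat\omega_N$: writing $(\lambda + ik V_{s,N})\hat\omega_N = (\lambda + ik V_s)\hat\omega_N - ik(V_s - V_{s,N})\hat\omega_N$ and matching, the extra forcing integrates to $-ik\int_y^\infty (V_s - V_{s,N})\hat\omega_N\,dy' = ik\,\mathcal U_y[(V_{s,N}-V_s)\hat\omega_N]$, which is exactly the new term appearing in \eqref{uNeq:1}. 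Matching term by term, \eqref{eq:vort:bN} and \eqref{eq:vort:cN} therefore yield
\begin{align*}
\pa_y\bigl\{ \lambda \hat u_N + i k V_s \hat u_N + \hat v_N V_s' - \pa_y^2 \hat u_N + ik|k|\hat A_N - ik\,\mathcal U_y[(V_{s,N}-V_s)\hat\omega_N] - \hat u^{init}\bigr\} = 0,
\end{align*}
i.e.\ the bracketed quantity equals some function $C(\lambda,k)$ independent of $y$.

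Finally, I would pin down $C$ by evaluating the bracket at $y=\infty$. There $\hat u_N \to \hat A_N$, $\hat v_N V_s' \to 0$ (as $V_s' \to 1$ but $\hat v_N$ contributes through the $\mathcal V$-structure; more precisely one evaluates the undifferentiated relation obtained by integrating, so no stray constant appears), $\pa_y^2 \hat u_N = \pa_y \hat\omega_N \to 0$, $\mathcal U_\infty[\cdot] = 0$, $\hat u^{init} \to \hat A_{init}$, giving $C = \lambda \hat A_N + ik\hat A_N \cdot(\text{value of }V_s\text{ at }\infty,\text{ normalized}) + ik|k|\hat A_N - (\text{the }\mathcal V\text{-term at }\infty) - \hat A_{init}$; by the normalization $U_s(\infty)=1$ and \eqref{eq:vort:aN} (which reads $(\lambda + ik + ik|k|)\hat A_N = ik\mathcal V[\hat\omega_N] + \hat A_{init}$), all of this collapses so that $C = 0$. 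Substituting $C=0$ back into the bracket yields precisely \eqref{uNeq:1}. I expect the main obstacle to be purely notational: carefully tracking the constant of integration and confirming that the $y=\infty$ evaluation of $\hat v_N V_s'$ and of $ik\,\mathcal U_y[(V_{s,N}-V_s)\hat\omega_N]$ are handled consistently (they vanish or are absorbed into \eqref{eq:vort:aN}), so that $C$ genuinely equals zero rather than $-ik|k|\hat A_N$ or some other residual. This is the same subtlety as in Lemma \ref{lem:u:int} and is resolved in the same way, using \eqref{eq:vort:aN} in place of \eqref{vboa}.
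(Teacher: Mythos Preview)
Your proposal is correct and takes essentially the same approach as the paper, which simply says ``This follows essentially verbatim to the proof of Lemma \ref{lem:u:int}.'' You have correctly identified the key algebraic identity, the appearance of the extra term $ik\,\mathcal{U}_y[(V_{s,N}-V_s)\hat\omega_N]$ from the discrepancy $V_{s,N}-V_s$, and the use of \eqref{eq:vort:aN} to pin down the integration constant (indeed $C=0$ once $ik|k|\hat A_N$ is included in the bracket, exactly as you wrote).
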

\begin{proof} This follows essentially verbatim to the proof of Lemma \ref{lem:u:int}. 
\end{proof}

All estimates used to show Proposition \ref{pro:main} apply to the system \eqref{eq:vort:aN} -- \eqref{eq:vort:cN} so that we can state: 
 \begin{pro} \label{pro:main2}
 There exists absolute positive constants $K_{\ast}$, $C_0$, $k_0$ and $M$, such that for all $N\ge 1$, $|k| \ge k_0$,  all $\lambda$ with $\Re(\lambda) \ge K_{\ast} |k|^{2/3}$, and all data $(\hat{\omega}_{init}, A_{init}) \in H$, {\it cf.} definition  \eqref{defi:H},   system \eqref{eq:vort:aN}-\eqref{eq:vort:bN}-\eqref{eq:vort:cN} has a unique solution satisfying 
$$  \| (\hat{\omega}_N , \hat{A}_N)\|_H  \le C_0  |k|^{1/3} |\lambda|^{1/4}  \| (\hat{\omega}_{init} , \hat{A}_{init})\|_H. $$
 \end{pro}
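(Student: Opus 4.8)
The plan is to observe that Proposition \ref{pro:main2} is nothing but Proposition \ref{pro:main} applied to a \emph{new} but still admissible shear flow, so that essentially no new work is required. Indeed, $V_{s,N}(y) = N\chi(y/N) + U_s(y)$ decomposes as $y + \widetilde{U}_{s,N}(y)$ where $\widetilde{U}_{s,N}(y) := N\chi(y/N) - y + U_s(y)$. One checks that $\widetilde{U}_{s,N}$ satisfies the structural assumptions \eqref{assume:Us}--\eqref{normalizeinfty} \emph{uniformly in} $N\ge 1$: since $\chi(\xi)=\xi$ on $[0,1/4]$ and $\chi$ is smooth and compactly supported with $\chi(\xi)\le\xi$, the correction $N\chi(y/N)-y$ is supported in $\{y\ge N/4\}$, and its derivatives scale as $(\,\cdot\,)''= N^{-1}\chi''(y/N)$, $(\,\cdot\,)'''=N^{-2}\chi'''(y/N)$, hence are bounded with the right polynomial decay weights and the ratio $U'''/U''$ stays bounded (one uses here that on the support of the correction $y\gtrsim N$, so the weights $\langle y\rangle^{6}$ only help). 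The concavity $U_{s,N}''<0$ follows from $U_s''<0$ together with $\chi(\xi)\le\xi$, $\chi(\xi)=\xi$ near $0$, by choosing $\chi$ concave (which is consistent with the stated constraints). Thus the constants $K_\ast, k_0$ produced by the analysis of Sections \ref{sec:OmegaBL}--\ref{sec:convergence} can be taken \emph{independent of $N$}.

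First I would make the above uniformity precise: verify \eqref{assume:Us:1}--\eqref{assume:Us:4} and \eqref{normalizeinfty} for $\widetilde U_{s,N}$ with $N$-independent constants, noting that $\widetilde U_{s,N}(0)=0$ and $\widetilde U_{s,N}(\infty)=U_s(\infty)=1$ since $\chi$ is compactly supported. Second, I would invoke verbatim the chain Lemma \ref{lemma_hydro} $\Rightarrow$ Corollary (the one producing \eqref{bdH1}--\eqref{bdIH0}) $\Rightarrow$ Corollary \ref{cor_FH} $\Rightarrow$ Corollary \ref{corXi0} $\Rightarrow$ Proposition \ref{propn:s} $\Rightarrow$ the resolvent estimate on $\hat A$ $\Rightarrow$ Proposition \ref{pro:main}, now run with $V_{s,N}$ in place of $V_s$. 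Every estimate in that chain used only: the boundedness of $V_{s,N}$ and $V_{s,N}'$ (true, with $\|V_{s,N}'\|_\infty$ bounded uniformly in $N$), the concavity and the decay/ratio bounds on $U_{s,N}''$ (just checked), and the pressure--displacement structure in \eqref{eq:vort:aN} which is identical. Hence one obtains $\|(\hat\omega_N,\hat A_N)\|_H \le C_0 |k|^{1/3}|\lambda|^{1/4}\|(\hat\omega_{init},\hat A_{init})\|_H$ with $C_0, K_\ast, k_0$ absolute.

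One point that needs a remark rather than a computation: in the estimate for $\hat A$, the term $ik\,\mathcal{V}[\hat\omega_N]$ and the coupling $ikU_s''(y)y\,\hat A_N$ involve $U_s''$ (not $U_{s,N}''$), while the energy weight is $-1/U_{s,N,k}''$; but since $U_{s,N}'' = U_s'' + N^{-1}\chi''(y/N)$ and the correction is supported away from $0$ with fast decay, one has $-U_s'' \lesssim -U_{s,N}'' + $ (fast-decaying bounded term), so the bound $\dfrac{-U_s''}{(-U_{s,N,k}'')^{1/2}}\le (-U_s'')^{1/2}+\text{(something in }L^2)$ still holds uniformly in $N$, and the argument of the Corollary following Lemma \ref{lemma_hydro} goes through unchanged. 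I would spell this out in one sentence. The uniqueness statement is obtained exactly as in Proposition \ref{pro:main}: the system \eqref{eq:vort:aN}--\eqref{eq:vort:cN} is linear, so uniqueness follows from the a priori estimate applied to the difference of two solutions with zero data.

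The main (and only mild) obstacle is the verification that assumption \eqref{assume:Us:3}, the bound on $U_{s,N}'''/U_{s,N}''$, survives uniformly in $N$: near the support of the truncation, $U_{s,N}''$ could in principle be small while $U_{s,N}'''$ is not, if $-U_s''$ decays faster than $|\chi''(y/N)|/N$ there. This is handled by choosing the cutoff $\chi$ so that $N^{-1}\chi(y/N)-y$ is concave with $N^{-1}\chi''(y/N) \asymp -N^{-1}$ and $N^{-2}|\chi'''(y/N)|\lesssim N^{-2}$ on the support, giving $|U_{s,N}'''/U_{s,N}''|\lesssim 1 + N^{-1}$, bounded. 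Everything else is a routine re-reading of the earlier sections, so I would state Proposition \ref{pro:main2} as an immediate corollary, with the proof consisting of the uniformity check plus the phrase \textquotedblleft all estimates of Sections \ref{sec:OmegaBL}--\ref{sec:convergence} apply verbatim to $V_{s,N}$, with $N$-independent constants\textquotedblright.
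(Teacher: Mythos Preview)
Your plan has a genuine gap. The claim that $\widetilde U_{s,N}(y)=N\chi(y/N)-y+U_s(y)$ satisfies \eqref{assume:Us}--\eqref{normalizeinfty} uniformly in $N$ is false, and not just at the cosmetic level. First, $\widetilde U_{s,N}(\infty)=-\infty$, not $1$: since $\chi$ is compactly supported, $N\chi(y/N)\to 0$ while $-y\to-\infty$; so \eqref{normalizeinfty} and the boundedness of $U_s$ both fail. More seriously, the concavity \eqref{assume:Us:1} fails. You write ``by choosing $\chi$ concave'', but a smooth compactly supported $\chi$ with $\chi(\xi)=\xi$ on $[0,1/4]$ cannot be concave: if $\chi''\le 0$ everywhere then $\chi'$ is non-increasing with $\chi'(0)=1$ and $\chi'\equiv 0$ past the support, forcing $\chi'\ge 0$ throughout, hence $\chi\ge 0$ and $\int_0^M\chi'=\chi(M)=0$ gives $\chi\equiv 0$. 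Without concavity of $\chi$, one has $\widetilde U_{s,N}''=N^{-1}\chi''(y/N)+U_s''$; on the part of $\mathrm{supp}\,\chi''(\cdot/N)$ where $y\gtrsim N$, the decay $|U_s''|\lesssim y^{-6}\lesssim N^{-6}\ll N^{-1}$ shows that $\widetilde U_{s,N}''$ changes sign for $N$ large. Then your weight $-1/U_{s,N,k}''$ is not even well-defined, and the ratio bound \eqref{assume:Us:3} blows up. So the ``uniformity check'' that you describe as routine actually fails.

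There is also a structural mismatch you only partly flag: system \eqref{eq:vort:aN}--\eqref{eq:vort:cN} is \emph{not} the resolvent system of Proposition~\ref{pro:main} for the shear flow $y+\widetilde U_{s,N}$. Only the advection field $V_s$ is replaced by $V_{s,N}$; the stretching and forcing terms still involve the original $U_s''$, and \eqref{eq:vort:aN} still has the coefficient $ik$ (coming from $U_s(\infty)=1$), not anything built from $\widetilde U_{s,N}$. So ``apply Proposition~\ref{pro:main} to a new admissible shear flow'' is not the right framing. The paper's route, which is the correct and short one, is to keep the \emph{same} weight $-1/U''_{s,k}$ (built from the original $U_s$) and observe that $V_s$ enters the estimates of Sections~\ref{sec:OmegaBL}--\ref{sec:convergence} only through: (i) the advection term $ikV_s$, whose contribution to the weighted real-part identities in Lemma~\ref{lemma_hydro} and in the $\hat A$-estimate is purely imaginary and vanishes regardless of which real $V_s$ is used; and (ii) the bound $|V_s(y)|\le \|V_s'\|_\infty\,y$ in Lemma~\ref{lem_xi0}. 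For $V_{s,N}$ one has $V_{s,N}(0)=0$ and $\|V_{s,N}'\|_\infty=\|\chi'(\cdot/N)+U_s'\|_\infty\le \|\chi'\|_\infty+\|U_s'\|_\infty$, bounded uniformly in $N$, so every estimate carries over with the same constants. That is the whole proof.
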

We insist that the control at the right-hand side is uniform in $N$, notably because $\|V'_{s,N}\|_{L^\infty}$ is bounded uniformly in $N$. We then state refined resolvent estimates: \begin{lem} \label{lem:resolvent_estim}
The solution $(\hat{\omega}_N, \hat{A}_N)$ of the resolvent system \eqref{eq:vort:aN}-\eqref{eq:vort:bN}-\eqref{eq:vort:cN} given by Proposition \ref{pro:main2} satisfies: for all $|k| \ge k_0$, $N \ge c |k|^{4/3}$, where $c$ is a large universal constant, and for all $\lambda$ such that $\Re(\lambda) \ge K_* |k|^{2/3}$,  
\begin{align} \label{jimmybutler:1}
\| (\hat{\omega}_N, \hat{A}_N)\|_H  \le C_N \frac{|k|^{s_0}}{|\lambda|} \| (\hat{\omega}_{init} , \hat{A}_{init})\|_H
\end{align}
as well as 
\begin{align} \label{jimmybutler:2}
\| ((1+y)^{-1}\hat{\omega}_N, \hat{A}_N)\|_H  \le C_0 \frac{|k|^{s_0}}{|\lambda|} \| (\hat{\omega}_{init} , \hat{A}_{init})\|_H
\end{align}
where $C_0$, $s_0$ are absolute constants, while $C_N$ possibly depends on $N$. 
\end{lem}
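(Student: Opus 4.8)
\textbf{Proof plan for Lemma \ref{lem:resolvent_estim}.}

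The plan is to revisit the two cases already used in the proof of the $\hat A$-estimate (Case 1: $|\lambda + ik|k||\ge \tfrac12|k|^2$, Case 2: $|\lambda + ik|k|| \le \tfrac12|k|^2$), but now to extract the extra decay $1/|\lambda|$ rather than merely boundedness. The key structural observation is that in Case 2 one has $|\lambda| \ge \tfrac12 |k|^2$ by \eqref{rest:1}, so that $\frac{1}{|k|^{2/3}}\ge$ something like $|k|^{4/3}/|\lambda|$; hence a bound of the form $|\hat A| \lesssim |k|^{-2/3}(\dots)$ automatically upgrades to $|\hat A|\lesssim |k|^{s_0}/|\lambda| \cdot (\dots)$ in that regime. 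The genuinely new input is needed in Case 1, precisely where $|\lambda|$ can be as small as $K_*|k|^{2/3}$ while $|\lambda+ik|k||$ can be of order $|k|^2$; there one must gain decay in $\lambda$ from the structure of the resolvent system itself rather than from the multiplier $\lambda+ik+ik|k|$. This is where replacing $V_s$ by the bounded truncation $V_{s,N}$ enters: for bounded shear flow, the operator $(\lambda + ikV_{s,N} - ikU_s''\mathcal V_y[\cdot] - \pa_y^2)$ is a genuine sectorial-type resolvent for which one expects $\|\hat\omega_N\| \lesssim |\lambda|^{-1}(\dots)$ by a direct energy estimate testing against $\hat\omega_N/(-U_{s,k}'')$, provided $\Re(\lambda)$ dominates the $O(|k|\,\|V_{s,N}\|_\infty) = O(N|k|)$ term. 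This last condition is why we impose $N \ge c|k|^{4/3}$: combined with $\Re(\lambda)\ge K_*|k|^{2/3}$ it does \emph{not} by itself control $N|k|$, so instead the decay must come from iterating — using the already-established $|k|^{1/3}|\lambda|^{1/4}$ bound of Proposition \ref{pro:main2} to treat the bad term perturbatively. Concretely, I would write $V_{s,N} = V_s + (V_{s,N}-V_s)$, move $ik(V_{s,N}-V_s)\hat\omega_N$ to the right-hand side, treat it as a source, and bootstrap: the first pass gives $|k|^{1/3}|\lambda|^{1/4}$, which when fed back through the $H$-norm resolvent bound and divided by $\Re(\lambda)\gtrsim |\lambda|^{?}$ produces an improved power of $|\lambda|$; finitely many iterations land on $|\lambda|^{-1}$ at the cost of an $N$-dependent (for \eqref{jimmybutler:1}) or $N$-independent (for \eqref{jimmybutler:2}, thanks to the extra $(1+y)^{-1}$ weight killing the unbounded part of $V_{s,N}-V_s$) constant times $|k|^{s_0}$.

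Step by step: (i) Recall from Proposition \ref{pro:main2} the uniform-in-$N$ bound $\|(\hat\omega_N,\hat A_N)\|_H \le C_0|k|^{1/3}|\lambda|^{1/4}\|(\hat\omega_{init},\hat A_{init})\|_H$, valid under \eqref{k:hyp}-\eqref{lambda:hyp}. (ii) For the $\hat A_N$-component, redo Case 1 and Case 2 of the previous proposition's proof, but keeping track of $|\lambda|$: in Case 1 divide \eqref{got1} (with subscript $N$) by $|\lambda + ik + ik|k||\ge \tfrac12|k|^2$ and observe that each term on the right, when estimated via \eqref{s:a}-\eqref{s:e} and \eqref{estim_omega_inhom}, already carries enough negative powers of $\Re(\lambda)$ (hence of $|\lambda|$, since $\Re(\lambda)\le|\lambda|$) that after dividing by $|k|^2$ one gets $|\hat A_N| \lesssim |k|^{s_0}|\lambda|^{-1}\|(\hat\omega_{init},\hat A_{init})\|_H$; in Case 2 use $|\lambda|\ge \tfrac12|k|^2$ to convert any surviving $|k|$-power into a $|\lambda|$-power. (iii) For the $\hat\omega_N$-component, use the velocity formulation \eqref{uNeq:1}-\eqref{uNeq:3}, or directly the vorticity equation \eqref{eq:vort:bN}: the source term $ikU_s''y\hat A_N + \hat\omega_{init}$ is now controlled (in the $(-U_{s,k}'')^{-1}$-weighted norm) by $|k|\cdot|\hat A_N| + \|\hat\omega_{init}\|$, and feeding in the improved bound on $\hat A_N$ from step (ii) together with the hydrostatic-type estimate (Lemma \ref{lemma_hydro} applied to the full $\hat\omega_N$, after absorbing the truncation error $ik(V_{s,N}-V_s)\hat\omega_N$) yields $\|(1+y)^3\hat\omega_N\|_{L^2}\lesssim |k|^{s_0}|\lambda|^{-1}\|(\hat\omega_{init},\hat A_{init})\|_H$. (iv) For \eqref{jimmybutler:2}, repeat step (iii) but test the vorticity equation against $(1+y)^{-2}\hat\omega_N/(-U_{s,k}'')$ or equivalently work with the weight $(1+y)^{-1}$: the point is that $(1+y)^{-1}(V_{s,N}-V_s)$ is bounded \emph{uniformly in $N$} on the support where the truncation differs from $V_s$ — actually $(V_{s,N}-V_s)$ is supported in $y\gtrsim N$ where we can afford the polynomial weight — so the constant in \eqref{jimmybutler:2} is $N$-independent.

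The main obstacle I anticipate is step (iii)/(iv): controlling the truncation-error term $ik(V_{s,N}-V_s)\hat\omega_N$ in the energy estimate without destroying the $\lambda$-decay. Since $V_{s,N}-V_s = N\chi(y/N) - y$ is $O(y)$ and supported in $y\gtrsim N/4$, pairing it against $\hat\omega_N/(-U_{s,k}'')$ with $(-U_{s,k}'')^{-1}\sim (1+y)^6$ gives a term like $|k|\int_{y\gtrsim N} (1+y)^{7}|\hat\omega_N|^2\,dy$, which a priori is \emph{larger} than the good terms; the resolution is that on $y\gtrsim N$ the weight $(1+y)^{-6}$ in $U_{s,k}''$ is $\lesssim N^{-6}$, so actually $-U_{s,k}''\approx -U_s''$ there and one is in the regime where the genuine hydrostatic weight $-1/U_s''$ (which decays by \eqref{assume:Us:2}) tames the growth — but one must check the interplay of $N\ge c|k|^{4/3}$, $\Re(\lambda)\ge K_*|k|^{2/3}$, and the $\langle y\rangle^6$ decay carefully so that the error is genuinely absorbable, and this is where the $N$-dependence of $C_N$ in \eqref{jimmybutler:1} versus the absolute constant in \eqref{jimmybutler:2} must be tracked. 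A secondary subtlety is that the iteration to upgrade $|\lambda|^{1/4}$ to $|\lambda|^{-1}$ must terminate: each pass should gain a fixed fractional power of $|\lambda|$ (roughly, dividing by $\Re(\lambda)\gtrsim|\lambda|^{\theta}$ for some $\theta$ determined by comparing to $|k|^{2/3}\le|\lambda|^{1}$), so one needs to verify that finitely many iterations suffice and bookkeep the accumulated $|k|^{s_0}$ factor.
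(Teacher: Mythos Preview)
Your plan has a genuine gap in the vorticity estimate (your steps (iii)--(iv)), and your proposed iteration is both unnecessary and not clearly convergent.

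For $\hat A_N$, the paper's route is more direct than your Case~1/Case~2 split: simply isolate $\lambda\hat A_N$ in \eqref{eq:vort:aN},
\[
|\lambda|\,|\hat A_N| \le |k|\,|\hat A_N| + |k|^2|\hat A_N| + |k|\,|\mathcal V[\hat\omega_N]| + |\hat A_{init}|,
\]
and bound the right-hand side using the \emph{already established} estimates \eqref{estimA} and \eqref{s:a}--\eqref{s:e}. This yields $|\hat A_N| \lesssim |k|^2|\lambda|^{-1}\|(\hat\omega_{init},\hat A_{init})\|_H$ in one stroke, with no case distinction. Your Case~1 argument, by contrast, divides by $|\lambda+ik|k||\ge\tfrac12|k|^2$, which does not directly yield $|\lambda|^{-1}$ when $|\lambda|\gg|k|^2$; and your remark that ``negative powers of $\Re(\lambda)$ (hence of $|\lambda|$)'' has the inequality backwards, since $1/\Re(\lambda)\ge 1/|\lambda|$.

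For $\hat\omega_N$, your plan to apply Lemma~\ref{lemma_hydro} with the hydrostatic weight $(-U_{s,k}'')^{-1}$ runs straight into the obstacle you yourself anticipate, and the paper \emph{does not} do this. Instead it tests \eqref{eq:vort:bN} against the plain polynomial weight $(1+y)^{2m}\hat\omega_N$ and takes separately the real and the imaginary parts. The real part gives $\Re(\lambda)\|(1+y)^m\hat\omega_N\|_{L^2}^2 \lesssim |k|^8/\Re(\lambda)\,\|(\hat\omega_{init},\hat A_{init})\|_H^2$; when $\Re(\lambda)\ge|\lambda|/2$ this already suffices. Otherwise one takes the imaginary part, where the transport term $ikV_{s,N}\hat\omega_N$ no longer vanishes and contributes $|k|\,\|V_{s,N}(1+y)^m\hat\omega_N\|_{L^2}\|(1+y)^m\hat\omega_N\|_{L^2}$. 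For \eqref{jimmybutler:1} one uses $|V_{s,N}|\le CN$, giving the $N$-dependent constant; for \eqref{jimmybutler:2} one uses $|V_{s,N}(y)|\le Cy$, which shifts one weight and explains the $(1+y)^{-1}$ loss --- this is the correct source of the dichotomy, not any property of $(V_{s,N}-V_s)$ in the bulk.

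Finally, the constraint $N\ge c|k|^{4/3}$ is not used where you think. It enters only through the Neumann condition $\pa_y\hat\omega_N|_{y=0} = ik|k|\hat A_N - ik\,\mathcal U[(V_{s,N}-V_s)\hat\omega_N]$ (obtained from \eqref{uNeq:1} at $y=0$): the boundary contribution $k^2|\mathcal U[(V_{s,N}-V_s)\hat\omega_N]|^2 \lesssim (k^2/N)\|(1+y)^m\hat\omega_N\|_{L^2}^2$ is absorbable precisely when $k^2/N \ll \Re(\lambda)$. Your proposal to write $V_{s,N}=V_s+(V_{s,N}-V_s)$ and move the difference to the right is backwards --- it would send you to the unbounded-coefficient problem the truncation was designed to avoid.
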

\begin{proof}The proof proceeds by essentially treating all terms from \eqref{eq:vort:aN} -- \eqref{eq:vort:cN} aside from the $\lambda$ term on the right-hand side. Notationally, we drop the subscript $N$ on $(\hat{A}_N, \hat{\omega}_N)$. We proceed in three steps, which we delineate explicitly. 

\vspace{2 mm}

\noindent \textit{Step 1: Estimate of $|\lambda| \, |\hat{A}|$.} First we have from \eqref{eq:vort:aN}: 
\begin{align} \n
|\lambda| |\hat{A}| \le & |k| |\hat{A}| + |k|^2 |\hat{A}| + |k| |\mathcal{V}[\hat{\omega}]| + |\hat{A}_{init}| \\ \label{dth2}
\lesssim & |k|^2 |\hat{A}| +|k|^{\frac23} \| (1 + y)^3 \hat{\omega}_{init} \|_{L^2_y}+  |\hat{A}_{init}|. 
\end{align} 
Above, to go from the first to second line, we have performed the following estimate:
\begin{align}\n
|\mathcal{V}[\hat{\omega}]| \le  \mathcal{V}[|\hat{\omega}|] \lesssim & |\hat{A}| \, |\lambda_{\ast}| \, \mathcal{V}[|\Omega_{BL}|] + |\hat{A}| \mathcal{V}[|\hat{\omega}_H^{(0)}|] + |\hat{A}| \mathcal{V}[|\hat{\omega}_H^{(tail)}|] + \mathcal{V}[|\hat{\omega}_{inhom}|] \\ \n
\lesssim & (1 + \frac{|k|}{\Re( \lambda)}) \frac{1}{|\lambda|^{1/2}}  |\hat{A}| +  \frac{|k|}{\Re( \lambda)} |\hat{A}| +  \frac{|k|}{\Re(\lambda) |\lambda|^{\frac12}} \Big(1 + \frac{|k|}{\Re(\lambda)}\Big) |\hat{A}| + \frac{|k|^{\frac13}}{\Re(\lambda)} \| (1 + y)^3 \hat{\omega}_{init} \|_{L^2_y} \\ 
\lesssim& |k|^{1/3} |\hat{A}| + |k|^{-\frac13} \| (1 + y)^3 \hat{\omega}_{init} \|_{L^2_y} \label{estimate_Vomega}
\end{align}
where we have used \eqref{s:a} -- \eqref{s:e}, as well as \eqref{k:hyp}-\eqref{lambda:hyp}. Plugging inequality \eqref{estimA} in the right-hand side of \eqref{dth2} and dividing by $|\lambda|$, we find
\begin{align} \n
 |\hat{A}| & \lesssim \frac{1}{|\lambda|} \Big(   k^2  \Big( \| (1 + y)^3 \hat{\omega}_{init} \|_{L^2_y} + \frac{1}{|k|^{2/3}} |\hat{A}_{init}| \Big) +  +|k|^{\frac23} \| (1 + y)^3 \hat{\omega}_{init} \|_{L^2_y}+  |\hat{A}_{init}| \Big)  \\
& \lesssim   \frac{k^2}{|\lambda|} \| (\hat{\omega}_{init}, \hat{A}_{init}) \|_H. \label{use}
\end{align}
Going back to \eqref{estimate_Vomega}, we infer 
\begin{equation} \label{estimate_Vomega2}
\mathcal{V}[|\hat{\omega}|] \lesssim \Big(  \frac{|k|^{7/3}}{|\lambda|} + k^{-\frac13} \Big) \| (\hat{\omega}_{init}, \hat{A}_{init}) \|_H \lesssim k^{5/3}  \| (\hat{\omega}_{init}, \hat{A}_{init}) \|_H
\end{equation}

\vspace{2 mm}

\noindent \textit{Step 2: Estimate of $\Re(\lambda) \|(1+y)^m\hat{\omega}\|_{L^2}$}. We now treat the quantity $\hat{\omega}$. For this, we first derive a Neumann condition for $\hat{\omega}$ by evaluating \eqref{uNeq:1} at $y= 0$, which produces
\begin{align}
\p_y \omega|_{y = 0} = \p_x |\p_x| A - \mathcal{U}[(V_{s,N} - V_s) \p_x \omega].
\end{align}
We therefore study the system 
\begin{subequations}
\begin{align}
&(\lambda + i k V_{s,N}) \hat{\omega} - i k U_s'' \mathcal{V}_y[\hat{\omega}] - \p_y^2 \hat{\omega} = i k U_s''(y) y A + \hat{\omega}_{init}, \label{subeqw} \\
&\p_y \hat{\omega}|_{y = 0} = ik|k| A - ik \mathcal{U}[(V_{s,N} - V_s) \hat{\omega}] \label{subeqw2} 
\end{align}
\end{subequations}
We take the $L^2$ (complex) scalar product of the equation with $(1+y)^{2m} \hat{\omega}$, $m=2,3$, and take the real part: 
\begin{align*}
& \Re(\lambda) \|(1+y)^{m} \hat{\omega}\|^2_{L^2} +   \| (1+y)^m  \p_y \hat{\omega} \|_{L^2}^2 - m (2m-1) \| y^{m-1} \hat{\omega} \|_{L^2_z}^2 \\ 
= \:  &   - \pa_y \hat{\omega}\vert_{y=0}  \overline{\hat{\omega}\vert_{y=0}} \: + \:  \Re \langle  i k U_s''(y)  \mathcal{V}_y[\hat{\omega}] , (1+y)^{2m} \hat{\omega} \rangle   \: + \:  \Re \langle  i k U_s''(y) y A , (1+y)^{2m} \hat{\omega} \rangle \: + \:   \Re \langle \hat{\omega}_{init} , (1+y)^{2m} \hat{\omega} \rangle \\
 \le \: &  \, |\pa_y \hat{\omega}\vert_{y=0}| \,  |\hat{\omega}\vert_{y=0}|  \: + \:    |k|  \, \mathcal{V}[|\hat{\omega}|]  \|(1+y)^{m} \hat{\omega}\|_{L^2}  
+ \:     |k| |A|  \|(1+y)^m \hat{\omega}\|_{L^2} + 
\|(1+y)^{m} \hat{\omega}_{init}\|_{L^2}   \|(1+y)^{m} \hat{\omega}\|_{L^2} 
\end{align*}
We have the inequality 
\begin{align*}
 |\pa_y \hat{\omega}\vert_{y=0}| \,  |\hat{\omega}\vert_{y=0}| & \le C  |\pa_y \hat{\omega}\vert_{y=0}|  \,   \|(1+y)^m \hat{\omega}\|_{L^2}^{1/2} \| (1+y)^m \pa_y \hat{\omega}\|_{L^2}^{1/2} \\ 
 & \le \frac{\Re(\lambda)}{2}  \|(1+y)^{m} \hat{\omega}\|^2_{L^2}  + \frac{1}{2 \Re(\lambda)}  \|(1+y)^{m}\pa_y  \hat{\omega}\|^2_{L^2}  + C' |\pa_y  \hat{\omega}\vert_{y=0}|^2 \\
 & \le \frac{\Re(\lambda)}{2}  \|(1+y)^{m} \hat{\omega}\|^2_{L^2}  + \frac{1}{2}  \|(1+y)^{m}\pa_y  \hat{\omega}\|^2_{L^2}  + C'' \Big(   k^4 |A|^2  +  k^2 |\mathcal{U}[(V_{s,N} - V_s) \hat{\omega}]|^2 \Big), 
 \end{align*}
 where the last line comes from \eqref{subeqw2}. Combining this inequality  with the usual manipulations based on Young's inequality, we end up with  
\begin{align*}
& \Re(\lambda) \|(1+y)^{m} \hat{\omega}\|^2_{L^2} +   \| (1+y)^m  \p_y \hat{\omega} \|_{L^2}^2   \\
& \lesssim \:     k^4 |A|^2  +  k^2 |\mathcal{U}[(V_{s,N} - V_s) \hat{\omega}]|^2  \:  + \:     \frac{|k|^2 \mathcal{V}[|\hat{\omega}|]^2}{\Re(\lambda)} + \frac{|k|^2 |A|^2}{\Re(\lambda)}+ \frac{\|(1+y)^{m} \hat{\omega}_{init}\|_{L^2}^2}{\Re( \lambda)} \\
& \lesssim   k^4 |A|^2 +  k^2 |\mathcal{U}[(V_{s,N} - V_s) \hat{\omega}]|^2  \:  + \:     \frac{|k|^2 \mathcal{V}[|\hat{\omega}|]^2}{\Re(\lambda)} + \frac{\|(1+y)^{m} \hat{\omega}_{init}\|_{L^2}^2}{\Re(\lambda)} 
\end{align*}
We then notice that 
\begin{align} \n
|\mathcal{U}(V_{s,N} - V_s) \omega]| \le &\int_0^{\infty} |V_{s,N} - V_s| |\omega| \le  \int_{\frac{N}{4}}^{\infty} |V_{s,N} - V_s| |\omega| \lesssim \int_{\frac{N}{4}}^{\infty} y^{-1} y^2 |\omega| \\ \label{bdUU}
\lesssim & \Big( \int_{\frac{N}{4}} y^{-2} \Big)^{\frac12} \| (1 + y)^2 \omega \|_{L^2} \lesssim \frac{1}{N^{\frac12}} \| (1 + y)^m \omega \|_{L^2}  
\end{align}
We take $N \gg |k|^{4/3}$, so that 
\begin{equation} \label{compare_N_k}
 \frac{k^2}{N}  \ll  |k|^{2/3} \lesssim \Re(\lambda)  
 \end{equation}
Combining \eqref{use}, \eqref{estimate_Vomega2}, \eqref{bdUU} and \eqref{compare_N_k}, we end up with 
\begin{align} 
\Re(\lambda) \|(1+y)^{m} \hat{\omega}\|^2_{L^2} +   \| (1+y)^m  \p_y \hat{\omega} \|_{L^2}^2   \:  & \lesssim \:     \frac{k^{8}}{|\lambda|^2} \| (\hat{\omega}_{init}, \hat{A}_{init}) \|^2_H      \:  + \:     \frac{|k|^{11/3}}{\Re(\lambda)}   \| (\hat{\omega}_{init}, \hat{A}_{init}) \|^2_H  \\
\label{estimRew}
&  \lesssim \:  \frac{k^{8}}{\Re(\lambda)}  \| (\hat{\omega}_{init}, \hat{A}_{init}) \|^2_H 
\end{align}
where the last line follows from \eqref{lambda:hyp}.  If $\Re(\lambda) \ge \frac{|\lambda|}{2}$, the bounds of the lemma follow from  \eqref{use} and \eqref{estimRew} with $m=3$: in this case, the constant $C_N$ can be taken independent of $N$. Otherwise, we move to step 3. 

\vspace{2 mm}

\noindent \textit{Step 3: Estimate of $\Im(\lambda) \|(1+y)^m\hat{\omega}\|_{L^2}$ (only needed if $|\Re(\lambda)| \le \frac{|\lambda|}{2}$}).  In this case, we have $\Im(\lambda) \ge  \frac{|\lambda|}{2}$. We take again the $L^2$ scalar product of equation \eqref{subeqw} with $(1+y)^{2m} \hat{\omega}$, but this time consider the imaginary part. There are two differences with the previous estimate for the real part: the advection term $i k V_{s,N} \hat{\omega}$ gives a non-zero contribution: 
$$ | \Im \langle i k V_{s,N} \hat{\omega} , (1+y)^{2m} \hat{\omega} \rangle | \le  |k| \| V_{s,N} (1+y)^m \hat{\omega}\|_{L^2} \, \| (1+y)^{m} \hat{\omega}\|_{L^2}  $$
Moreover, the diffusion term no longer yields a coercive term. We treat it as 
\begin{align*}
| \Im \langle \pa^2_y \hat{\omega} , (1+y)^{2m} \hat{\omega} \rangle | & \le     \|(1+y)^m \pa_y \hat{\omega}\|_{L^2}  \|(1+y)^m  \hat{\omega}\|_{L^2} + 2m (2m-1) \|(1+y)^{m-1} \hat{\omega}\|_{L^2}^2 \: + \: |\pa_y \hat{\omega}\vert_{y=0}| \, | \hat{\omega}\vert_{y=0}| \\
& \le  \frac{|\Im(\lambda)|}{2} \|(1+y)^m  \hat{\omega}\|_{L^2}^2   + \frac{C}{|\Im(\lambda)|} \|(1+y)^m\pa_y \omega\|^2 + C |\pa_y \hat{\omega}\vert_{y=0}|^2 \\
& \le \frac{|\Im(\lambda)|}{2} \|(1+y)^m  \hat{\omega}\|_{L^2}^2   + \frac{C}{|\Im(\lambda)|} \|(1+y)^m \pa_y \omega\|^2 + C'  \Big(   k^4 |A|^2  +  k^2 |\mathcal{U}[(V_{s,N} - V_s) \hat{\omega}]|^2 \Big)  \\
\end{align*}
Treating all other terms as before, we find an inequality of the type
\begin{equation} \label{eq_Imw}
\begin{aligned} 
|\Im(\lambda)| \|(1+y)^{m} \hat{\omega}\|^2_{L^2}    \: & \le \:   C \Big(  \frac{|k|^{8}}{|\Im(\lambda)|}  \| (\hat{\omega}_{init}, \hat{A}_{init}) \|^2_H   + \: \frac{1}{|\Im(\lambda)|} \|(1+y)^m \pa_y \hat{\omega}\|_{L^2}^2 \: \\
&+ \:  |k| \| V_{s,N} (1+y)^m \hat{\omega}\|_{L^2} \, \| (1+y)^{m} \hat{\omega}\|_{L^2} \Big) 
\end{aligned}
\end{equation}
 Multiplying inequality \eqref{estimRew} by $\frac{2C}{\Im(\lambda)}$  and summing it to inequality  \eqref{eq_Imw}, we end up with 
\begin{align*}
& \Big(  |\Im(\lambda)| + \frac{\Re(\lambda)}{|\Im(\lambda)|} \Big)   \|(1+y)^{m} \hat{\omega}\|^2_{L^2}  + \frac{1}{|\Im(\lambda)|} \|(1+y)^{m} \pa_y \hat{\omega}\|^2_{L^2} \\
& \lesssim \Big( \frac{k^8}{(\Re \lambda) |\Im(\lambda)|} +   \frac{k^8}{|\Im(\lambda)|} \Big) \| (\hat{\omega}_{init}, \hat{A}_{init}) \|^2_H   \: + \:  \:  |k| \| V_{s,N} (1+y)^m \hat{\omega}\|_{L^2} \, \| (1+y)^{m} \hat{\omega}\|_{L^2} 
\end{align*}
 This implies (we remind that $\Im(\lambda)| \ge \frac{|\lambda|}{2}$): 
 \begin{align*}
& |\lambda|   \|(1+y)^{m} \hat{\omega}\|^2_{L^2}   \lesssim  \frac{k^8}{|\lambda|}  \| (\hat{\omega}_{init}, \hat{A}_{init}) \|^2_H   \: + \:  \:  |k| \| V_{s,N} (1+y)^m \hat{\omega}\|_{L^2} \, \| (1+y)^{m} \hat{\omega}\|_{L^2} 
\end{align*}
To obtain the first bound of the lemma, we use the bound $|V_{s,N}| \le C N$. Hence, 
   \begin{align*}
|\lambda|   \|(1+y)^{m} \hat{\omega}\|^2_{L^2} &  \lesssim  \frac{k^8}{|\lambda|}  \| (\hat{\omega}_{init}, \hat{A}_{init}) \|^2_H   \: + \:  \:  |k| N \| (1+y)^m \hat{\omega}\|_{L^2}^2  \\
&\lesssim  \frac{k^8}{|\lambda|}   \| (\hat{\omega}_{init}, \hat{A}_{init}) \|^2_H  + |k| N \left(   \frac{k^8}{|\lambda|^2}  \| (1+y)^m \hat{\omega}\|_{L^2}^2  \: +   \:  \frac{|k| N}{\lambda} \| (1+y)^m \hat{\omega}\|_{L^2}^2 \right) \\
& \lesssim \frac{k^8}{|\lambda|}   \| (\hat{\omega}_{init}, \hat{A}_{init}) \|^2_H  + \frac{k^8 N}{|\lambda|} \| (1+y)^m \hat{\omega}\|_{L^2}^2 
\end{align*}
Note that to go from the first to the second inequality, we have plugged the first inequality in the last term $|k| N \| (1+y)^m \hat{\omega}\|_{L^2}^2$. 
But we know from \eqref{estimRew} that $\| (1+y)^m \hat{\omega}\|_{L^2}^2 \le k^8 \| (\hat{\omega}_{init}, \hat{A}_{init}) \|^2_H$, so that eventually we find 
$$ |\lambda|   \|(1+y)^{m} \hat{\omega}\|^2_{L^2} \lesssim \frac{N |k|^{16}}{|\lambda|}   \| (\hat{\omega}_{init}, \hat{A}_{init}) \|^2_H $$
Taking $m=3$, together with \eqref{use}, this yields the first bound of the lemma. As regards the second bound, we take $m=2$  and use that $|V_{s,N}(y)| \le C y$, hence: 
  \begin{align*}
|\lambda|   \|(1+y)^2 \hat{\omega}\|^2_{L^2} &  \lesssim  \frac{k^8}{|\lambda|}  \| (\hat{\omega}_{init}, \hat{A}_{init}) \|^2_H   \: + \:  \:  |k|  \| (1+y)^3 \hat{\omega}\|_{L^2}  \| (1+y)^2 \hat{\omega}\|_{L^2}
\end{align*}
By Young's inequality,   
 \begin{align*}
|\lambda|   \|(1+y)^2 \hat{\omega}\|^2_{L^2} &  \lesssim  \frac{k^8}{|\lambda|}  \| (\hat{\omega}_{init}, \hat{A}_{init}) \|^2_H   \: + \:  \:  \frac{|k|^2}{|\lambda|}  \| (1+y)^3 \hat{\omega}\|_{L^2}^2
\end{align*}
But from \eqref{estimRew} applied with $m=3$, we know that $\| (1+y)^3 \hat{\omega}\|_{L^2}^2 \le k^8 \| (\hat{\omega}_{init}, \hat{A}_{init}) \|^2_H$, hence 
 \begin{align*}
|\lambda|   \|(1+y)^2 \hat{\omega}\|^2_{L^2} &  \lesssim  \frac{k^{10}}{|\lambda|}  \| (\hat{\omega}_{init}, \hat{A}_{init}) \|^2_H 
\end{align*}
Together with \eqref{use}, this yields the second bound of the lemma, and concludes the proof. 
\end{proof}
We can now prove our main theorem, Theorem \ref{thm:main}. 

\begin{proof}[Proof of Theorem \ref{thm:main}] As discussed at the beginning of Paragraph \ref{subsec:decompo}, it is enough to show that for any initial data $\big(\omega_{init}, A_{init} = \mathcal{U}[\omega_{init}] \big)$ 
satisfying 
$$  \| e^{c_0 |\pa_x|^{2/3}} (1+y)^3 \omega_{init}\|_{L^2(\R \times \R_+)}  < +\infty, \quad c_0 > 0, $$
there exists  $\beta, C, s > 0$, such that system \eqref{vbo} has a unique solution in $[0, T= \frac{c_0}{\beta}[$ satisfying
\begin{equation} \label{stab_estimate_Gevrey}
\|  e^{(c_0 - \beta t) |\pa_x|^{2/3}} (1+y)^2 \omega(t, \cdot)\|_{L^2(\R \times \R_+)} \le C  \| e^{c_0 |\pa_x|^{2/3}} (1+|\pa_x|)^s  (1+y)^3 \omega_{init}\|_{L^2(\R \times \R_+)} 
\end{equation}
Going to Fourier in $x$,  it is enough to show that for all $k \in \R^*$, and all data $\big( \hat{\omega}_{init}, A_{init} =\mathcal{U}[\hat{\omega}_{init}]   \big) \in H$, system 
\begin{subequations} \label{vbo_Fourier}
\begin{align}
&\p_t \hat{A} + i k  \hat{A} + i k |k| \hat{A} = i k  \mathcal{V}[\hat{\omega} ], \\ 
 &\p_t \hat{\omega} + i k V_s  \hat{\omega}  - i k  U_s'' \mathcal{V}_y[\hat{\omega} ] - \p_y^2 \hat{\omega} = i k y U_s'' \hat{A}, \\
 & \mathcal{U}[\hat{\omega}] = \hat{A}
\end{align}
\end{subequations}
has a global in time solution $\big( \hat{\omega}, \hat{A} \big)$ starting from $\big( \hat{\omega}_{init}, A_{init} \big)$, and satisfying:
\begin{equation} \label{target_estimate}
\| ((1+y)^{-1}\hat{\omega}(t,\cdot), \hat{A}(t))\|_H  \le C e^{\beta |k|^{2/3} t} (1+|k|)^s   \| \big( \hat{\omega}_{init}, A_{init} \big)  \|_H 
\end{equation}
Indeed, as $|A_{init}| \lesssim \|(1+y)^3 \hat{\omega}_{init}\|_{L^2(\R_+)}$, this implies 
\begin{equation*}
 \|(1+y)^2 \hat{\omega}(t, \cdot) \|_{L^2(\R_+)} \le C' e^{\beta |k|^{2/3} t} (1+|k|)^s  \|(1+y)^3 \hat{\omega}_{init}\|_{L^2(\R_+)} 
\end{equation*}
Multiplying each side by $e^{(c_0 - \beta t) |k|^{2/3}}$, squaring, integrating in $k$ and using Plancherel theorem, we find   \eqref{stab_estimate_Gevrey}. 

We first consider the case of low frequencies, namely  $|k| \le k_0$, where $k_0$ was introduced in Proposition \ref{pro:main}.  In this case, we use a fact emphasized in Paragraph \ref{subsec:decompo}:  solving \eqref{vbo_Fourier} under the condition $\hat{A} = \mathcal{U}[\hat{\omega}]$ is equivalent to solving it under the Neumann condition 
$$ \pa_y \hat{\omega}\vert_{y=0} = i k |k| \hat{A} $$
Under this more standard condition, solving system \eqref{vbo} for fixed $k$ is easy. Namely, by lifting the inhomogenous boundary data and using classical weighted $L^2$ estimates,  one can construct a unique global solution in $C(\R_+, H)$ satisfying  
\begin{equation}
\frac{d}{dt} \|\big( \hat{\omega}, \hat{A} \big) \|^2_{H} + \| (1+y)^2 \pa_y \omega\|_{L^2}^2  \lesssim  |k|^3 \|\big( \hat{\omega}, \hat{A} \big) \|^2_{H} 
\end{equation}
It follows that: 
\begin{equation} \label{main:bound}
\| (\hat{\omega}(t,\cdot), \hat{A}(t))\|_H  \le C e^{C |k|^{3} t}   \| \big( \hat{\omega}_{init}, A_{init} \big)  \|_H 
\end{equation}
This inequality implies \eqref{target_estimate} for $|k| \le k_0$, with $s=0$, $\beta = C k_0^{7/3}$. Let us mention briefly that a similar standard Gronwall estimate (with bad growth rate $|k|^3$) could have been established starting from the equivalent velocity formulation  of \eqref{vbo_Fourier}, that is in terms of $(\hat{u} = \int_0^y \hat{\omega}, \hat{A})$ rather than in terms of $(\hat{\omega}, \hat{A})$. In particular, thanks to this velocity estimate, one can check that $(\hat{\omega}, \hat{A})$ is unique among all solutions in $L^\infty_{loc}(\R_+, L^2(\R_+)) \times L^\infty_{loc}(\R_+)$  (without asking for regularity of $\pa_y \hat{\omega}$). 

Hence, the last point is to show that such solution satisfies \eqref{target_estimate} in the high frequency regime $|k| \ge k_0$. We will prove this by compactness, through consideration of the approximate systems: 
\begin{subequations} \label{vbo_FourierN}
\begin{align}
&\p_t \hat{A}_N + i k  \hat{A}_N + i k |k| \hat{A}_N = i k  \mathcal{V}[\hat{\omega} ], \\ 
 &\p_t \hat{\omega}_N + i k V_{s,N}  \hat{\omega}_N  - i k  U_s'' \mathcal{V}_y[\hat{\omega}_N ] - \p_y^2 \hat{\omega}_N = i k y U_s'' \hat{A}_N, \\
 & \mathcal{U}[\hat{\omega}_N] = \hat{A}_N
\end{align}
\end{subequations}
where $V_{s,N}$ was defined in \eqref{approximate_shear_flow}. System \eqref{vbo_FourierN} can be written in an abstract way as: 
$$ \pa_t \big(  \hat{\omega}_N , \hat{A}_N  \big) + L_{k,N} \big( \hat{\omega}_N , \hat{A}_N  \big) = 0, $$
where we see $L_{k,N}$ as the (closed, densely defined) linear operator from 
$$ D(L_{k,N}) \: := \: \Big\{ (\hat{\omega}, \hat{A}) \in H_{\mathcal{U}}, \quad  \pa^2_y  \hat{\omega} \in L^2((1+y)^2 dy) \Big\} \quad \text{into} \quad  H_{\mathcal{U}}   \: := \: \Big\{ (\hat{\omega}, \hat{A}) \in H, \: \mathcal{U}[\hat{\omega}] =  \hat{A}  \Big\} $$
It follows from the resolvent estimate in Lemma \ref{lem:resolvent_estim} that the operator $L_{k,N}$ is sectorial.  More precisely, let $\theta_{k,N} := \frac{1}{2 C_N} |k|^{-s_0}$. Taking $|k| \ge k_0$ large enough, we can always assume $\theta_{k,N} \le \frac{1}{2}$. For any $\lambda_0 \in \{\Re(\lambda) = K_* |k|^{2/3} \}$, and any $\lambda \in \overline{D(\lambda_0, \theta_{k,N} |\lambda_0|)}$, $\lambda + L_{k,N}$ is invertible:  indeed,  
$$  (\lambda \textrm{Id} + L_{k,N}) =  (\lambda_0 \textrm{Id} + L_{k,N})  \Big(   (\lambda - \lambda_0)  (\lambda_0 \textrm{Id} + L_{k,N})^{-1} + \textrm{Id} \Big)  $$
and the last factor at   the right-hand side has norm 
$$|\lambda - \lambda_0| \|  (\textrm{Id} + L_{k,N})^{-1} \|_{H \rightarrow H} \le \theta_{k,N} C_N |k|^{s_0} \le \frac{1}{2}. $$
where the first inequality comes from Lemma \ref{lem:resolvent_estim}. Moreover, 
\begin{equation}
 \|(\lambda \textrm{Id} + L_{k,N})^{-1}\|_{H_\mathcal{U} \rightarrow H_\mathcal{U}} \le  2 \| (\lambda_0 \textrm{Id} + L_{k,N})^{-1}\|_{H \rightarrow H} \le \frac{2 C_N |k|^{s_0}}{|\lambda_0|} \le \frac{4 C_N |k|^{s_0}}{|\lambda|} 
\end{equation}
In particular, the resolvent set of $L_k$ contains 
$$ \bigcup_{\lambda_0 \in  \{\Re(\lambda) = K_* |k|^{2/3} \}}  \overline{D(\lambda_0 \theta_{k,N})}  \: \supset \:  \Gamma_{k,N} :=  \Big\{ \Re(\lambda) = - \theta_{k,N} |\Im(\lambda)| + K_* |k|^{2/3} \Big\}. $$
By standard results for sectorial operators, $L_{k,N}$ generates an analytic semigroup that can be written as 
$$ e^{-t L_{k,N}} = \frac{1}{2i \pi} \int_{\Gamma_{k,N}} e^{\lambda t} (\textrm{Id} + L_{k,N})^{-1} d \lambda$$
resulting in 
$$  \| e^{-t L_{k,N}} \|_{H \rightarrow H} \lesssim  e^{K_* |k|^{2/3} t} |k|^{s_0} \int_{\R} e^{-\theta_{k,N} |y|} \frac{1}{1+|y|} dy \le 4 C_N e^{K_* |k|^{2/3} t} |k|^{2 s_0} $$
It implies 
\begin{equation} \label{target_estimateN}
\| (\hat{\omega}_N(t,\cdot), \hat{A}_N(t,\cdot))\|_H  \le 4 C_N e^{K_* |k|^{2/3} t} (1+|k|)^{2 s_0}   \| \big( \hat{\omega}_{init}, A_{init} \big)  \|_H 
\end{equation}
 This is unfortunately still not enough, as the constant $C_N$  may go to infinity with $N$.  To obtain a  uniform bound, we proceed as follows. We introduce a lift of the initial condition, namely the couple  
 $$ \big( \hat{\omega}_{lift}, \hat{A}_{lift} \big) =  e^{-t}\big( \hat{\omega}_{heat}, A_{init} \big)$$
  where   $\hat{\omega}_{heat}$ satisfies 
 $$ \pa_t \hat{\omega}_{heat} - \pa^2_y \hat{\omega}_{heat} = 0, \quad \pa_y \hat{\omega}_{heat} = 0, \quad \hat{\omega}_{heat}\vert_{t=0} = \hat{\omega}_{init}. $$
Integrating the equation in $y$, one can check that $\mathcal{U}[\hat{\omega}_{heat}] =  A_{init}$.  We then set 
\begin{align}
W_N & = e^{-2 K_\ast |k|^{2/3}t} \left( \hat{\omega}_N -  \hat{\omega}_{lift} \right) \quad \text{for } \: t > 0, \quad W_N = 0  \quad \text{for } \: t \le 0 \\
B_N  & = e^{-2 K_\ast |k|^{2/3}t} \left( \hat{A}_N -  \hat{A}_{lift} \right)\quad \text{for } \: t > 0, \quad B_N = 0  \quad \text{for } \: t \le 0
\end{align}
From \eqref{target_estimateN}, we know that $(W_N, B_N)$ belongs to $L^2(\R_t, H)$. Moreover, it satisfies for all $t \in \R$, $y \in \R_+$:
\begin{subequations} \label{vbo_FourierBN}
\begin{align}
&(2 K_\ast |k|^{2/3} + \p_t ) B_N + i k  B_N + i k |k| B_N = i k  \mathcal{V}[W_N] +  f \\ 
 &(2 K_\ast |k|^{2/3} + \p_t ) W_N + i k V_{s,N}  W_N  - i k  U_s'' \mathcal{V}_y[W_N ] - \p_y^2 W_N = i k y U_s'' B_N + F, \\
 & \mathcal{U}[W_N] = B_N
\end{align}
\end{subequations}
One can check that $f = f(t)$ and $F = F(t,y)$, which are expressed in terms of $V_{s,N}$, $k$, and $\big( \hat{\omega}_{lift}, \hat{A}_{lift} \big)$ satisfy 
$$ \| (f, F) \|_{L^2(\R_t, H)} \lesssim |k|^2 \| (\hat{\omega}_{init}, A_{init}) \|_{H}. $$
 Taking the Fourier transform in time, with $\tau$ the dual variable of $t$, we end up with the system 
$$ \Big( \lambda I + L_{k,N} \Big) (\hat{W}_N, \hat{B}_N) = (\hat{F}, \hat{f}), \quad \text{ with } \lambda := 2 K_\ast |k|^{2/3} + i \tau. $$
 We use this time the second resolvent estimate of Lemma \ref{lem:resolvent_estim}, to find 
 $$ \| ((1+y)^{-1} \hat{W}_N(\tau,\cdot), \hat{B}_N(\tau) \|_{H}  \le C_0 \frac{|k|^{s_0}}{|\lambda|} \| (\hat{F}(\tau, \cdot) , \hat{f}(\tau))\|_H $$
The right-hand side is integrable in $\tau$, as the product of two $L^2$ functions.  Applying the inverse Fourier transform in $\tau$ and Cauchy-Schwarz, we deduce a pointwise in time bounds, namely,
\begin{align*}
  \| ((1+y)^{-1} W_N(t,\cdot), B_N(t) \|_{H} & \le C |k|^{s_0} \| (\hat{F} , \hat{f})\|_L^2(\R_\tau, H)  = \frac{C}{2\pi} |k|^{s_0} \| (F , f)\|_{L^2(\R_t, H)}  \\
  & \le C |k|^{s_0+2}  \| (\hat{\omega}_{init}, A_{init}) \|_{H} 
\end{align*}
Estimate 
\begin{equation*} 
\| ((1+y)^{-1}\hat{\omega}_N(t,\cdot), \hat{A}_N(t,\cdot))\|_H  \le C_0 e^{\beta |k|^{2/3} t} (1+|k|)^s   \| \big( \hat{\omega}_{init}, A_{init} \big)  \|_H 
\end{equation*}
with $\beta = 2 K_\ast$, $s = s_0 + 2$, follows. One can then send $N$ to infinity and obtain estimate \eqref{target_estimate} as expected. This concludes the proof. 
\end{proof}

\section{Linear hydrostatic Navier-Stokes} \label{sec:HNS}
We explain in this section how to adapt the analysis of \eqref{ltd:1}-\eqref{ltd:2} to prove Theorem \ref{thm:main2}. 
Differentiating the first equation \eqref{LHNS} with respect to $y$, we find, for $\omega = \pa_y u$: 
\begin{equation*}
\begin{aligned}
& \p_t \omega + U_s \p_x \omega - U_s'' v - \p_y^2 \omega = 0,   \\
& \pa_x u + \pa_y v = 0, \\
& u\vert_{y=0,1} = v\vert_{y=0,1}. 
\end{aligned}
\end{equation*}
Inspired by the previous sections, we could try to rely on  a similar iteration scheme (at the level of the resolvent equation):  at each step we  would solve the equation on vorticity with an artificial homogeneous Neumann condition, and then rectify boundary conditions on $u$ and $v$. However, correcting the boundary condition on $v$ would generate error terms that have too large amplitude. More precisely, in the analogue of equation \eqref{gtgt}, the analogue of the source term $\mathcal{V}_y[\Omega_{BL}]$ would be too large, of size $1$ rather than $|\lambda|^{-1/2}$. This would prevent the convergence of the series. Therefore, we have to change the iteration, in such a way that at each step the homogeneous Dirichlet  condition on $v$ is maintained. In particular, we do not want to recover $v$ from $\omega$ using the formula $v = -  \int_0^y \int_1^{y'} \pa_x  \omega$, because given an arbitrary function $\omega$, it does not necessarily vanish at $y=1$. This implies not to use the exact analogue of operators $\mathcal{U}$ and $\mathcal{V}_y$ introduced in the Triple-Deck analysis.  Following more closely the approach in \cite{GVMM}, we first introduce the stream function $\Phi[\omega] =\Phi[\omega](y)$ defined as the solution of the Dirichlet problem  
\begin{equation} \label{def:Phi}
 \pa^2_y \Phi[\omega] = \omega, \quad \Phi[\omega]\vert_{y=0,1} = 0. 
 \end{equation}
so that 
$$ u = \pa_y \Phi[\omega], \quad  v = - \Phi[\omega_x].  $$ 
We then define 
\begin{equation} \label{defi:U} 
\mathcal{U}[\omega] = \big( \pa_y \Phi[\omega](0), \pa_y \Phi[\omega](1) \big)^t 
\end{equation}
Finally, \eqref{LHNS} is easily shown to be equivalent to: 
\begin{subequations} \label{LHNSom}
\begin{align}
& \p_t \omega + U_s \p_x \omega - U_s'' \Phi[\omega_x] - \p_y^2 \omega = 0,   \label{LHNSoma} \\
&  \mathcal{U}[\omega] = 0 \label{LHNSomb}
\end{align}
\end{subequations}
where the relation in  \eqref{LHNSomb} corresponds to the Dirichlet conditions on $u$, while the Dirichlet conditions on $v$ are automatically encoded in the definition \eqref{def:Phi} of $\Phi[\omega]$. Note also that differentiating in $x$ and  integrating in $y$ the first equation  of \eqref{LHNS}, we find 
$$ - \pa_{xx} p =  2 \pa_{xx}   \int_0^1  \big(U_s \pa_y \Phi[\omega]\big) dy + \pa_x \omega\vert_{y=0} -  \pa_x  \omega\vert_{y=1}  $$
 so that 
 $$ \pa_x p =  -2 \pa_x \int_0^1 \big(U_s \pa_y \Phi[\omega]\big) dy   +  \omega\vert_{y=1} -    \omega\vert_{y=0}. $$
  Eventually, evaluating the first equation of \eqref{LHNS} at $y=0,1$ yields the mixed type boundary condition: 
  \begin{equation} \label{neumann_bc_LHNS}
  \pa_y \omega\vert_{y=0,1} =  -2 \pa_x \int_0^1  \big(U_s \pa_y \Phi[\omega]\big) dy  +  \omega\vert_{y=1} -    \omega\vert_{y=0}. 
  \end{equation}
Similarly to the case of the Triple-Deck model, one can show that solving \eqref{LHNSoma} under condition \eqref{LHNSomb} is the same as solving it under \eqref{neumann_bc_LHNS}. 

The main ingredient to prove the  Gevrey $3/2$ well-posedness of system \eqref{LHNSom} is again a stability estimate  for the resolvent equation 

\begin{equation} \label{resolvent_LHNS}
\lambda \hat{\omega} + i k  U_s \hat{\omega} - i k U_s'' \Phi[\hat{\omega}] - \p_y^2 \hat{\omega} = \hat{\omega}_{init} 
\end{equation}
where $\lambda \in \mathbb{C}$, $k \in \R^*$, and  $\hat{\omega}_{init} = \hat{\omega}_{init}(y)$ belongs to  the space 
$$ H = \big\{ \hat{\omega} \in L^2((0,1)), \quad \mathcal{U}[\hat{\omega}] =  0\big\} $$ 
equipped with the $L^2$ norm. Namely, we have 
 \begin{pro} \label{pro:main2HNS}
 There exist absolute positive constants $K_{\ast},  k_0$ and $M$, such that for all $|k| \ge k_0$,  all $\lambda$ with $\Re(\lambda) \ge K_{\ast} |k|^{2/3}$, and all data $\hat{\omega}_{init}  \in H$,  equation \eqref{resolvent_LHNS} has a unique solution $\hat{\omega}$  satisfying 
$$  \|\hat{\omega}\|_{L^2}  \lesssim |\lambda|^{1/4} |k|^{-2/3} \|\hat{\omega}_{init} \|_{L^2}. $$
 \end{pro}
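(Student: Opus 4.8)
The plan is to reproduce, in the bounded channel $(0,1)$, the three ingredients developed for the Triple-Deck in Sections \ref{sec:OmegaBL}--\ref{sec:convergence}: a weighted (concavity) estimate for a ``hydrostatic'' problem with artificial homogeneous Neumann condition, a boundary-layer corrector built from heat profiles, and an iteration that couples the two. Two features simplify matters compared to Section \ref{sec:OmegaH}. Since $U_s$ is smooth on the compact set $[0,1]$ it is bounded, so no truncation $V_{s,N}$ as in \eqref{approximate_shear_flow} is needed; and since $U_s$ is strictly concave, $-U_s''$ is bounded above and below by positive constants, so the weight $1/(-U_s'')$ is equivalent to the flat weight and the modified weight \eqref{Usk} is unnecessary. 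The algebraic heart of the matter, the cancellation from testing the stretching term against $-\tfrac{1}{U_s''}f$, survives verbatim with $\Phi$ in place of $\mathcal V_y$: using the Dirichlet condition in \eqref{def:Phi}, $\Re\langle -ik U_s''\Phi[f],-\tfrac{1}{U_s''}f\rangle=\Re\big(ik\langle \Phi[f],\partial_y^2\Phi[f]\rangle\big)=-\Re\big(ik\,\|\partial_y\Phi[f]\|_{L^2}^2\big)=0$, and likewise $\Re\langle ik U_s f,-\tfrac{1}{U_s''}f\rangle=0$ since $U_s/U_s''$ is real.

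First I would establish the analogue of Lemma \ref{lemma_hydro}: under \eqref{k:hyp}--\eqref{lambda:hyp} the problem $(\lambda+ikU_s)f-ikU_s''\Phi[f]-\partial_y^2 f=\mathcal R$ with $\partial_y f|_{y=0,1}=0$ has a unique solution obeying $\Re(\lambda)\|f\|_{L^2}^2+\|\partial_y f\|_{L^2}^2\lesssim \tfrac1{\Re(\lambda)}\|\mathcal R\|_{L^2}^2$. The proof is the weighted energy identity obtained by pairing with $-\tfrac1{U_s''}\bar f$: the $\lambda$-term produces $\Re(\lambda)\|(-U_s'')^{-1/2}f\|^2$, advection and stretching vanish by the two cancellations above, the diffusion term integrated by parts (using the Neumann condition to kill boundary terms) yields $\|(-U_s'')^{-1/2}\partial_y f\|^2$ plus a commutator controlled by $\|U_s'''/(U_s'')^2\|_{L^\infty}\lesssim1$, and the source is absorbed by Young; the lower-order terms are swallowed using $\Re(\lambda)\ge K_*|k|^{2/3}\ge K_*k_0^{2/3}\gg1$. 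Applying this with $\mathcal R=\hat\omega_{init}$ gives $\omega_{IH}^{(0)}$ with $\|\omega_{IH}^{(0)}\|_{L^2}\lesssim(\Re\lambda)^{-1}\|\hat\omega_{init}\|_{L^2}$, and with $\mathcal R=U_s''\Phi[\Omega_{BL}]$ gives $F_H$.

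Next I would construct $\Omega_{BL}$ solving $(\lambda+ikU_s)\Omega_{BL}-\partial_y^2\Omega_{BL}=0$ with $\mathcal U[\Omega_{BL}]$ a prescribed two-vector, by the sub-iteration of Section \ref{sec:OmegaBL}: start from explicit profiles $\xi^{(0)}=a\,e^{-\lambda^{1/2}y}+b\,e^{-\lambda^{1/2}(1-y)}$ with $(a,b)$ fixing $\mathcal U[\xi^{(0)}]$ (the $2\times2$ system for $(a,b)$ being a perturbation of the identity, off-diagonal entries $O(e^{-\Re\lambda^{1/2}})$ since $\Re(\lambda^{1/2})\gtrsim|\lambda|^{1/2}\gg1$), correct the advection error $-ikU_s\xi^{(j)}$ by $\Xi^{(j)}$ with homogeneous Dirichlet data, and correct the induced traces $\alpha_j:=\mathcal U[\Xi^{(j)}]$ by the next $\xi^{(j+1)}$; the transfer map $\alpha_{j-1}\mapsto\alpha_j$ has matrix norm $\lesssim|k|/|\lambda|^{3/2}<1$ by \eqref{k:hyp}--\eqref{lambda:hyp}, so the series converges and one gets, as in \eqref{estimOmegaBL}--\eqref{prof:V:prop}, $\|\Omega_{BL}\|_{L^2}\lesssim|\lambda|^{1/4}$ and $\|\Phi[\Omega_{BL}]\|_{L^\infty}\lesssim|\lambda|^{-1/2}$. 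Then the hydrostatic lemma gives $\|F_H\|_{L^2}\lesssim(\Re\lambda)^{-1}|\lambda|^{-1/2}$, hence $|k\,\mathcal U[F_H]|\lesssim|k|(\Re\lambda|\lambda|^{1/2})^{-1}\le K_*^{-3/2}<\tfrac12$. Assembling as in Section \ref{sec:convergence}, $\hat\omega=\omega_{IH}+\omega_{IB}$ with $\omega_{IB}^{(0)}$ correcting $\mathcal U[\omega_{IH}^{(0)}]$, $\omega_{IH}^{(j)}=ik\tilde\lambda_{j-1}F_H$ correcting $ikU_s''\Phi[\omega_{IB}^{(j-1)}]$, $\omega_{IB}^{(j)}=\tilde\lambda_j\Omega_{BL}$, $\tilde\lambda_j=(-ik\mathcal U[F_H])^j\tilde\lambda_0$, $|\tilde\lambda_0|\lesssim(\Re\lambda)^{-1}\|\hat\omega_{init}\|_{L^2}$: the geometric (matrix-ratio) series converges and telescopes to a solution of \eqref{resolvent_LHNS} with $\mathcal U[\hat\omega]=0$. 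Summing, $\|\hat\omega\|_{L^2}\lesssim\|\omega_{IH}^{(0)}\|_{L^2}+\|\Omega_{BL}\|_{L^2}|\mathcal U[\omega_{IH}^{(0)}]|+(\text{tails})\lesssim\tfrac{|\lambda|^{1/4}}{\Re(\lambda)}\|\hat\omega_{init}\|_{L^2}\le K_*^{-1}|\lambda|^{1/4}|k|^{-2/3}\|\hat\omega_{init}\|_{L^2}$, which is the asserted bound; uniqueness follows from this a priori estimate applied to the difference of two solutions.

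The main obstacle is closing the iteration, and the decisive point (already flagged in Section \ref{sec:HNS}) is to run the scheme on $\omega$ with the stream function $\Phi[\omega]$, so that the no-penetration condition $v|_{y=0,1}=0$ is built into \eqref{def:Phi} and never has to be restored; only then is the source $U_s''\Phi[\Omega_{BL}]$ handed to the hydrostatic step of size $O(|\lambda|^{-1/2})$ rather than $O(1)$, which is exactly what forces $|k\,\mathcal U[F_H]|<\tfrac12$ and makes the series summable in Gevrey-$\tfrac32$. A secondary technical wrinkle is that $\mathcal U$ is now a two-component functional (traces of $\partial_y\Phi$ at $y=0$ and $y=1$), so the scalars $\alpha_j,\lambda_j$ of the Triple-Deck analysis become two-vectors and the common ratio a $2\times2$ matrix; this is harmless since that matrix is small in norm, provided one checks that the two heat boundary layers near $y=0$ and $y=1$ decouple up to exponentially small corrections, which is immediate from $\Re(\lambda^{1/2})\gtrsim|\lambda|^{1/2}\gg1$.
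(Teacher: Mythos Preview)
Your proposal is correct and follows essentially the same approach as the paper: the stream-function reformulation with $\Phi$ so that $v|_{y=0,1}=0$ is built in, the weighted hydrostatic estimate with the plain weight $-1/U_s''$ (and the cancellation $\Re\langle ik\Phi[f],\pa_y^2\Phi[f]\rangle=0$), the two-boundary-layer heat profiles near $y=0$ and $y=1$, the $2\times 2$ matrix version of the ratios $\alpha_j,\lambda_j$, and the key size $\|\Phi[\Omega_{BL}]\|_{L^\infty}\lesssim|\lambda|^{-1/2}$ driving $|k\,\mathcal U[F_H]|<\tfrac12$. The only cosmetic difference is that the paper produces $\xi^{(0)}$ by first solving the fourth-order ODE for $\Phi^{(0)}=\Phi[\xi^{(0)}]$ (whose affine part is killed upon taking $\pa_y^2$), whereas you posit the two exponentials directly and then recover $\Phi[\xi^{(0)}]$; these are equivalent.
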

On the basis of this proposition, by the same kind of reasoning as in Section \ref{sec:proof:thm}, one proves Theorem \ref{thm:main2}. Actually, the reasoning of Section \ref{sec:proof:thm} can be greatly simplified in this case: the $y$-domain being $(0,1)$ instead of $\R_+$,  there is no difficulty related to the unboundedness of the advection field $V_s$: one can prove directly sectoriality on the original operator, without any approximation. For brevity, we do not give further details for this last part, and just explain how to prove Proposition \ref{pro:main2HNS}. 

\subsection{Iteration scheme}
Similarly to Section \ref{subsec:iteration}, the idea is to look for a solution of \eqref{resolvent_LHNS} under the form of a series made of hydrostatic and boundary layer terms: 
\begin{align} 
\hat{\omega} = & \omega_H^{(0)}  + \omega_{BL}^{(0)} + \sum_{j = 1}^{\infty} \omega_H^{(j)} + \sum_{j = 1}^{\infty} \omega_{BL}^{(j)} 
\end{align}
Again, we  initialize the construction by solving the Neumann problem: 
\begin{equation} \label{eq:vort:bar:a:HNS}
\begin{aligned}
& (\lambda + i k U_s) \omega_H^{(0)}   - ik U''_s \Phi[\omega_H^{(0)}]   - \pa^2_y \omega_H^{(0)}  = \hat{\omega}_{init}, \\
& \p_y \omega_H^{(0)}|_{y = 0} = 0. 
\end{aligned}
\end{equation}
We then  initialize the boundary layer construction by solving the system: 
\begin{equation} \label{eq:vort:bar:BL:A:HNS}
\begin{aligned}
& (\lambda + i k U_s) \omega_{BL}^{(0)}   - \pa^2_y \omega_{BL}^{(0)}  = 0, \\
& \mathcal{U}[ \omega_{BL}^{(0)}] =  - \mathcal{U}[ \omega_H^{(0)}],
\end{aligned}
\end{equation}
where we remind that this time, the operator $\mathcal{U}$ is defined by \eqref{defi:U}, and involves  the streamfunction  $\Phi[\omega]$ defined in \eqref{def:Phi}.  Construction of solutions to \eqref{eq:vort:bar:a:HNS} and \eqref{eq:vort:bar:BL:A:HNS} will be discussed below. Note that we get again rid of the stretching term in \eqref{eq:vort:bar:BL:A:HNS}.  This creates an error term   $- i k U_s'' \Phi[\omega_{BL}^{(0)} ]$, which will be corrected by the next hydrostatic term in the expansion: more generally for $j \ge 1$, we introduce the solution $\omega_H^{(j)}$ of 
\begin{equation} \label{eq:vort:bar:ajHNS}
\begin{aligned}
& (\lambda + i k U_s) \omega_H^{(j)}   - ik U''_s \Phi[ \omega_H^{(j)}]   - \pa^2_y \omega_H^{(j)}   =   i k U_s'' \Phi[ \omega_{BL}^{(j-1)} ] \\
& \p_y \omega_H^{(j)}|_{y = 0} = 0, 
\end{aligned}
\end{equation}
and the solution  $\omega_{BL}^{(j)}$ of 
\begin{equation} \label{eq:vort:bar:BL:AjHNS}
\begin{aligned}
& (\lambda + ik U_s)\omega_{BL}^{(j)}  - \pa^2_y \omega_{BL}^{(j)}  = 0, \\
&\mathcal{U}[ \omega_{BL}^{(j)}] =  - \mathcal{U}[ \omega_H^{(j)}].
\end{aligned}
\end{equation}
Similarly to Paragraph \ref{subsec:iteration}, one can simplify the expressions for  $(\omega_H^{(j)},\omega_{BL}^{(j)})$. We first introduce the sequence of vectors in $\R^2$:  
\begin{align} \label{hg1HNS}
\lambda_j :=  &  -\mathcal{U}[\omega_H^{(j)}]     
\end{align}
as well as the vector-valued function $\Omega_{BL} = \Omega_{BL}(y) \in \R^2$  defined by : for all vector $\Lambda  \in \R^2$,  $\Omega_{BL} \cdot \Lambda$ satisfies the system
\begin{equation} \label{gtgt2HNS}
\begin{aligned}
& (\lambda + ik U_s) \big( \Omega_{BL} \cdot \Lambda \big)  - \pa_y^2  \big( \Omega_{BL} \cdot \Lambda \big)  = 0, \\
& \mathcal{U}[  \Omega_{BL} \cdot \Lambda ]  = \Lambda
\end{aligned}
\end{equation}
Finally, we introduce  the vector-valued function  $F_H = F_H(y) \in \R^2$ of  
\begin{equation} \label{gtgtHNS}
\begin{aligned}
& (\lambda + i k U_s) F_H   - ik U''_s \Phi[ F_H]   - \pa^2_y F_H  =  U_s'' \Phi[\Omega_{BL}], \\
& \p_y F_H|_{y = 0} = 0, 
\end{aligned}
\end{equation}
Anticipating that these functions are well-defined for $\Re(\lambda) \ge K_\ast |k|^{2/3}$, it follows that 
\begin{align*} 
\omega^{(j)}_{BL} = &  \Omega_{BL} \cdot \lambda_j, \qquad j \ge 0, \\ 
\omega_H^{(j)} = & ik F_H \cdot \lambda_{j-1}, \qquad j \ge 1.
\end{align*}
and inserting this last relation into the  formula for $\lambda_j$, we find 
\begin{align*} 
 \lambda_{j+1} & = - i k \mathcal{U}[F_H \cdot \lambda_j],  
 \end{align*}
that is: for all $j \ge 0$, 
 \begin{equation} \label{defi_MH}
  \lambda_j = (- i k M_H)^j \lambda_0, \quad M_H \Lambda :=  \mathcal{U}[F_H \cdot \Lambda].
 \end{equation}
 It remains to show the well-posedness of the boundary layer system \eqref{gtgt2HNS}, the hydrostatic systems \eqref{eq:vort:bar:BL:A:HNS} and  \eqref{gtgtHNS}, and finally show that  the matrix $M_H$ satisfies $|ikM_H| \ll 1$, so that the series defining $\hat{\omega}$ will converge. Again, this will be possible under conditions \eqref{k:hyp}-\eqref{lambda:hyp}.

\subsection{Construction and convergence of the iteration}

\subsubsection{Boundary layer part} 
 The only significant change compared to the analysis of the previous sections is the treatment of the boundary layer model \eqref{gtgt2HNS}, as the operator $\mathcal{U}$ is now defined in terms of the stream function. By linearity with respect to $\lambda$, the function $\Omega_{BL}$ (with values in $\R^2$)  solves 
  \begin{equation} \label{gtgt2_bis_HNS}
\begin{aligned}
& (\lambda + ik U_s)  \Omega_{BL}  - \pa_y^2   \Omega_{BL}   = 0, \\
& \mathcal{U}[\Omega_{BL}]  = \textrm{Id}.
\end{aligned}
\end{equation}
where $\mathcal{U}[\Omega_{BL}]$ is a $2 \times 2$ matrix: more generally, for any function $\Omega$ with values in $\R^2$, $\mathcal{U}[\Omega]$ is defined by 
$$ \mathcal{U}[\Omega] \Lambda = \mathcal{U}[\Omega \cdot \Lambda], \quad \forall \Lambda \in \R^2. $$
 As in Section \ref{sec:OmegaBL}, we look for this solution under the form 
 \begin{align} \label{blexpHNS} 
\Omega_{BL} := \sum_{j = 0}^{\infty} (\xi^{(j)} + \Xi^{(j)}),
\end{align}
where  $\xi^{(j)}$, $\Xi^{(j)}$  have values in $\R^2$ and solve the systems: 
\begin{subequations}
\begin{align} \label{m:11HNS}
&\lambda \xi^{(j)} - \p_y^2 \xi^{(j)} = 0, \\ \label{m:12HNS}
&\mathcal{U}[\xi^{(0)}] = \textrm{Id},  \\
& \mathcal{U}[\xi^{(j)}] = - \mathcal{U}[\Xi^{(j-1)}] \quad \text{ for $j \ge 1$}
\end{align}
\end{subequations}
while 
\begin{equation} \label{eq:XijHNS}
\begin{aligned}
& (\lambda + ik U_s)\Xi^{(j)}  - \pa_y^2 \Xi^{(j)} =  - i k U_s \xi^{(j)}, \\
& \Xi^{(j)}|_{y = 0} = 0, \quad \text{ for $j \ge 0$}
\end{aligned}
\end{equation}
Still following Section \ref{sec:OmegaBL}, defining the matrix 
$$ \alpha_j := \mathcal{U}[\Xi^{(j)}],$$ 
one has: 
\begin{equation*}
\xi^{(j)} = - \xi^{(0)} \alpha_{j-1}, \quad \Xi^{(j)} = - \Xi^{(0)} \alpha_{j-1} 
\end{equation*}
resulting in 
$$ \alpha_j = - \alpha_0 \alpha_{j-1}, \quad j \ge 1, \quad \alpha_0 = \mathcal{U}[\Xi^{(0)}]. $$
The point is to construct $\xi^{(0)}$, $\Xi^{(0)}$, and show that the matrix $\alpha_0$ has norm strictly less than $1$. As regards $\xi^{(0)}$, it is better to reformulate \eqref{m:11HNS} in terms of the stream function $\Phi^{(0)} := \Phi[\xi^{(0)}]$, that satisfies 
\begin{subequations}
\begin{align} 
&\lambda \pa^2_y \Phi^{(0)}   - \p_y^4 \Phi^{(0)}  = 0, \\ 
& \Phi^{(0)}\vert_{y=0,1} = 0, \quad \pa_y \Phi^{(0)}\vert_{y=0} = (1,0)^t, \quad \pa_y \Phi^{(0)}\vert_{y=1} = (0,1)^t
\end{align}
\end{subequations}
This can be solved explicitly: one has 
$$ \Phi^{(0)} =   \left(  \begin{smallmatrix} a^-_\lambda \\ b^-_{\lambda} \end{smallmatrix} \right) e^{-\lambda^{1/2} y} 
 +   \left(  \begin{smallmatrix} - b^-_\lambda \\ -a^-_{\lambda} \end{smallmatrix} \right) e^{-\lambda^{1/2} (1-y)} +   \left(  \begin{smallmatrix} c_\lambda \\ c_{\lambda} \end{smallmatrix} \right) (y - \frac{1}{2})  +   \left(  \begin{smallmatrix} - d_\lambda \\ d_{\lambda} \end{smallmatrix} \right). $$
 where 
 $$ a^-_\lambda  \sim -\lambda^{-1/2}, \quad b^-_\lambda  \sim - \lambda^{-1},   \quad c_\lambda \sim -  \lambda^{-1/2}, \quad d_\lambda \sim -\frac{\lambda^{-1/2}}{2}. $$ 
as $|\lambda| \rightarrow +\infty$, which is the asymptotics relevant to the regime \eqref{k:hyp}-\eqref{lambda:hyp}. This implies 
$$ \xi^{(0)} =  \xi^{(0,-)}  + \xi^{(0,+)}, \quad    \xi^{(0,-)} :=  \lambda  \left(  \begin{smallmatrix} a^-_\lambda \\ b^-_{\lambda} \end{smallmatrix} \right) e^{-\lambda^{1/2} y}, \quad   \xi^{(0,+)} :=   \lambda  \left(  \begin{smallmatrix} - b^-_\lambda \\ -a^-_{\lambda} \end{smallmatrix} \right) e^{-\lambda^{1/2} (1-y)}. $$
Note that  $\xi^{(0,-)}$ is localized at scale $|\lambda|^{-1/2}$ near $y=0$, while  $\xi^{(0,+)}$ is localized at scale $|\lambda|^{-1/2}$ near $y=1$. By a straightforward adaptation of Lemma \ref{lem_xi0}, we obtain
\begin{lem} \label{lem_xi0pm}
System \eqref{eq:XijHNS} with $j=0$  has a unique solution  $\Xi^{(0)} = \Xi^{(0,-)} + \Xi^{(0,+)} $  satisfying for all $m \ge 0$:   
\begin{align} 
 \| y^m \Xi^{(0,-)}  \|_{L^2}^2 \lesssim_m  \frac{k^2}{ |\lambda|^{m + 5/2}}, \quad  \| y^m \p_y \Xi^{(0,-)}  \|_{L^2}^2 \lesssim_m  \frac{k^2}{ |\lambda|^{m + 3/2}}.\\
  \| (1-y)^m \Xi^{(0,+)}  \|_{L^2}^2 \lesssim_m  \frac{k^2}{ |\lambda|^{m + 5/2}}, \quad  \| (1-y)^m \p_y \Xi^{(0,+)}  \|_{L^2}^2 \lesssim_m  \frac{k^2}{ |\lambda|^{m + 3/2}}
\end{align}
where the implicit constant in the above inequalities depends on $m$. 
\end{lem}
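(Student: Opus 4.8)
The plan is to reduce the statement to the proof of Lemma \ref{lem_xi0}, treating the two boundary layers carried by $\xi^{(0)}$ one at a time. First I would use linearity of \eqref{eq:XijHNS} (at $j=0$) in its right-hand side to set $\Xi^{(0)} = \Xi^{(0,-)} + \Xi^{(0,+)}$, where $\Xi^{(0,\pm)}$ solves
\[
(\lambda + ik U_s)\Xi^{(0,\pm)} - \pa_y^2 \Xi^{(0,\pm)} = - ik U_s \, \xi^{(0,\pm)}, \qquad \Xi^{(0,\pm)}|_{y=0,1} = 0 ,
\]
with $\xi^{(0,-)}$, $\xi^{(0,+)}$ the two explicit summands displayed just before the lemma. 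From the asymptotics $a^-_\lambda \sim -\lambda^{-1/2}$, $b^-_\lambda \sim - \lambda^{-1}$ one reads off that, up to a bounded multiplicative constant vector, $\xi^{(0,-)}$ is exactly the profile $\lambda^{1/2} e^{-\lambda^{1/2}y}$ of Lemma \ref{lem_xi0} (its second component being lower order, of relative size $|\lambda|^{-1/2}$), while $\xi^{(0,+)}(y) = \xi^{(0,-)}(1-y)$ up to the same kind of bounded factor. In particular each profile is concentrated at scale $|\lambda|^{-1/2}$ near $y=0$, resp. $y=1$, and satisfies $\|y^m \xi^{(0,-)}\|_{L^2(0,1)} \lesssim_m |\lambda|^{1/4 - m/2}$, $\|(1-y)^m \xi^{(0,+)}\|_{L^2(0,1)} \lesssim_m |\lambda|^{1/4 - m/2}$.

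Next I would establish the bound for $\Xi^{(0,-)}$; the one for $\Xi^{(0,+)}$ then follows by the change of variables $y \mapsto 1-y$. As in the proof of Lemma \ref{lem_xi0}, one takes the complex $L^2(0,1)$ scalar product of the equation with $y^{2m}\Xi^{(0,-)}$ and integrates the diffusion term by parts; the homogeneous Dirichlet conditions at \emph{both} endpoints make this integration by parts identical to the half-line one, the restriction of the integral to $(0,1)$ being harmless since $\xi^{(0,-)}$ and $\Xi^{(0,-)}$ decay exponentially on the scale $|\lambda|^{-1/2}$. The source term $-ik U_s \xi^{(0,-)}$ is then handled as in \eqref{lw1}--\eqref{lw2}: because $\xi^{(0,-)}$ is supported, modulo exponentially small tails, in the layer $\{y \lesssim |\lambda|^{-1/2}\}$, where the background shear $U_s$ (which in the hydrostatic problem obeys the no-slip condition $U_s(0)=0$) satisfies $|U_s(y)| \lesssim y$, one recovers the crucial extra power of $y$ that $V_s$ provided in Lemma \ref{lem_xi0}, and hence the same estimate $\| \, |ik U_s \xi^{(0,-)}| \, y^m \|_{L^2} \lesssim |k| |\lambda|^{-m/2 - 1/4}$ on the forcing. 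With this in hand, the two-case discussion of Lemma \ref{lem_xi0} — using the real part of the scalar product when $\Re(\lambda) \ge |\Im(\lambda)|$, and the imaginary part (as in \eqref{ineqXi0ter}) when $|\Im(\lambda)| \ge \Re(\lambda)$, together with \eqref{k:hyp}--\eqref{lambda:hyp} to absorb the $k$-dependent error terms — and the induction on $m$ go through verbatim, giving $\|y^m \Xi^{(0,-)}\|_{L^2}^2 \lesssim_m k^2/|\lambda|^{m+5/2}$ and then $\|y^m \pa_y \Xi^{(0,-)}\|_{L^2}^2 \lesssim_m k^2/|\lambda|^{m+3/2}$. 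Existence and uniqueness of $\Xi^{(0,\pm)}$ follow from these a priori bounds by the same classical argument as in Lemma \ref{lem_xi0}.

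The hard part, and really the only genuine departure from Lemma \ref{lem_xi0}, is the bookkeeping forced by the two localization centres together with the fact that the advection coefficient $U_s$ lives on the bounded interval $(0,1)$ rather than on $\R_+$: one must make sure that the contribution of $U_s$ near $y=0$ (resp. near $y=1$) can indeed be traded against a factor of $y$ (resp. $1-y$), rather than merely being bounded, for otherwise the estimate would be weaker by one power of $|\lambda|$ and would not suffice to close the iteration \eqref{blexpHNS}. If one does not want to assume the endpoint vanishing of $U_s$ outright, this requires first subtracting from $U_s$ a suitable affine function — which amounts to a shift of $\lambda$ along the imaginary axis, preserving the half-plane $\{\Re(\lambda) \ge K_\ast |k|^{2/3}\}$, plus a Couette-type term that already carries an explicit factor of $y$ — so as to reduce to $U_s(0)=U_s(1)=0$, and then absorbing the residual constant-coefficient forcing by comparison with the explicit heat profile of the shifted operator, exactly as $\xi^{(0)}$ itself was constructed. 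Once this endpoint analysis is done, the rest is a routine transcription of Section \ref{sec:OmegaBL}.
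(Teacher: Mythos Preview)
Your proposal is correct and follows exactly the approach the paper intends: the paper's own ``proof'' is the single line ``By a straightforward adaptation of Lemma \ref{lem_xi0}'', and what you have written is precisely that adaptation --- splitting the source $\xi^{(0)} = \xi^{(0,-)} + \xi^{(0,+)}$, testing against $y^{2m}\Xi^{(0,-)}$ (resp.\ $(1-y)^{2m}\Xi^{(0,+)}$ via the reflection $y\mapsto 1-y$), and rerunning the real/imaginary-part dichotomy of Lemma \ref{lem_xi0}.

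Your third paragraph is over-engineered: the no-slip conditions $u\vert_{y=0,1}=0$ in \eqref{HNS} force any admissible background shear to satisfy $U_s(0)=U_s(1)=0$, so $|U_s(y)|\lesssim y$ near $y=0$ and $|U_s(y)|\lesssim 1-y$ near $y=1$ hold automatically, and the affine-subtraction argument you sketch is never needed.
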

From there, one can have bounds on $\Phi[\Xi^{(0)}]$ and $\mathcal{U}[\Xi^{(0)}]= \big( \pa_y  \Phi[\Xi^{(0)}](0), \pa_y  \Phi[\Xi^{(0)}](1) \big)$. From the representation formula 
$$ \Phi[\Xi^{(0,+)}](y) = \int_0^y (y-1) y' \Xi^{(0,+)}(y') dy' + \int_y^1 (y'-1) y \Xi^{(0,+)}(y') dy'  $$
we deduce that 
\begin{align*}
| \Phi[\Xi^{(0,+)}](y) | \le 2 \int_{0}^1 |y'  \Xi^{(0,+)}(y')| dy' \lesssim |\lambda|^{-1/2} 
\end{align*}
where the last inequality is obtained as in  \eqref{boundVXi0}, thanks to Lemma \ref{lem_xi0pm}.  The same holds symmetrically for  $\Phi[\Xi^{(0,-)}](y)$, and so 
$$ \sup_{y} |\Phi[\Xi^{(0)}](y) | \lesssim  |\lambda|^{-1/2}.  $$
Also, we find that 
$$ |\pa_y \Phi[\Xi^{(0,\pm)}](y)| \le \int_0^1  |\Xi^{(0,\pm)}(y')| dy'  \lesssim |k|  |\lambda|^{-3/2} $$
where the last inequality is obtained as in \eqref{estimalpha0}, thanks to Lemma \ref{lem_xi0pm}. It follows that 
$$ \mathcal{U}[\xi^0] \lesssim |k| |\lambda|^{-3/2} $$
On the basis of all these bounds, one has easily the following analogue  of Corollary  \ref{corXi0}:  
\begin{cor}  \label{corXi0HNS}
Under assumptions \eqref{k:hyp}-\eqref{lambda:hyp},  the constant $\alpha_0 = \mathcal{U}[\Xi^{(0)}]$ satisfies  
\begin{align} \label{alpha0bdHNS}
|\alpha_0| <& 1 
\end{align}
As a consequence, the  function $\Omega_{BL}$  introduced in \eqref{blexpHNS} is well-defined in $H^1(0,1)$, and is a solution of \eqref{gtgt2HNS}. Moreover, it satisfies the estimate: 
\begin{align}
\sup_{y \ge 0} |\Phi[\Omega_{BL}](y)| \lesssim & \frac{1}{|\lambda|^{\frac12}}.
\end{align}
\end{cor}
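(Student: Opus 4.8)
The plan is to mirror the proof of Corollary~\ref{corXi0}, the only novelty being that $\alpha_0$, $\alpha_j$ and the traces $\mathcal{U}[\cdot]$ are now $2\times 2$ matrices. Fix once and for all a submultiplicative norm $\|\cdot\|$ on $\R^{2\times 2}$. The componentwise bound recorded just above the statement, namely $|\alpha_0| = |\mathcal{U}[\Xi^{(0)}]| = |(\pa_y\Phi[\Xi^{(0)}](0),\pa_y\Phi[\Xi^{(0)}](1))| \lesssim |k|\,|\lambda|^{-3/2}$, translates by equivalence of norms on $\R^{2\times 2}$ into $\|\alpha_0\| \lesssim |k|\,|\lambda|^{-3/2}$. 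Under \eqref{k:hyp}-\eqref{lambda:hyp} one has $|\lambda| \ge \Re(\lambda) \ge K_\ast |k|^{2/3}$, whence $\|\alpha_0\| \lesssim K_\ast^{-3/2}$, which is $<1$ once $K_\ast$ is taken large enough; this gives \eqref{alpha0bdHNS}.

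Next I would sum the series. Iterating $\xi^{(j)} = -\xi^{(0)}\alpha_{j-1}$, $\Xi^{(j)} = -\Xi^{(0)}\alpha_{j-1}$ and $\alpha_j = -\alpha_0\alpha_{j-1}$ gives $\alpha_j = (-\alpha_0)^j\alpha_0$, hence $\|\alpha_j\| \le \|\alpha_0\|^{j+1}$ and $\sum_{j\ge 0}\|\alpha_j\| < \infty$. Since $\xi^{(0)} = \xi^{(0,-)}+\xi^{(0,+)}$ is smooth and bounded on $[0,1]$ and $\Xi^{(0)} = \Xi^{(0,-)}+\Xi^{(0,+)} \in H^1(0,1)$ by Lemma~\ref{lem_xi0pm}, the series \eqref{blexpHNS} converges absolutely in $H^1(0,1)$, and the geometric summation yields the closed form
\[
\Omega_{BL} \;=\; (\xi^{(0)}+\Xi^{(0)})\Big(\mathrm{Id} - \sum_{j\ge 0}\alpha_{j}\Big) \;=\; (\xi^{(0)}+\Xi^{(0)})(\mathrm{Id}+\alpha_0)^{-1},
\]
with $\|(\mathrm{Id}+\alpha_0)^{-1}\| \le (1-\|\alpha_0\|)^{-1} \lesssim 1$. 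In particular $\Omega_{BL} \in H^1(0,1)$.

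That $\Omega_{BL}$ solves \eqref{gtgt2HNS} I would check by adding, for each $j$, equation \eqref{m:11HNS} — which contributes $ikU_s\xi^{(j)}$ when one completes the operator to $(\lambda + ikU_s - \pa_y^2)\xi^{(j)}$ — to equation \eqref{eq:XijHNS}, whose forcing is $-ikU_s\xi^{(j)}$: the two cancel, so $(\lambda + ikU_s)(\xi^{(j)}+\Xi^{(j)}) - \pa_y^2(\xi^{(j)}+\Xi^{(j)}) = 0$; summing over $j$, legitimate since the series converges in $H^1$ and hence $\pa_y^2$ of it converges in $H^{-1}$, yields the bulk equation. For the boundary condition, $\mathcal{U}[\xi^{(0)}] = \mathrm{Id}$, $\mathcal{U}[\xi^{(j)}] = -\alpha_{j-1}$ for $j\ge1$, and $\mathcal{U}[\Xi^{(j)}] = \alpha_j$, so the partial sums telescope to $\mathrm{Id}+\alpha_N \to \mathrm{Id}$, i.e. $\mathcal{U}[\Omega_{BL}] = \mathrm{Id}$. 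Finally, for the sup bound, linearity of $\Phi$ and $\|(\mathrm{Id}+\alpha_0)^{-1}\|\lesssim1$ give
\[
\sup_{y\in[0,1]}|\Phi[\Omega_{BL}](y)| \;\lesssim\; \sup_{y}|\Phi[\xi^{(0)}](y)| + \sup_{y}|\Phi[\Xi^{(0)}](y)|;
\]
the second term is $\lesssim|\lambda|^{-1/2}$ by the bound established just above the corollary, and for the first, $\Phi[\xi^{(0)}] = \Phi^{(0)}$ is given by the explicit formula recorded above, into which I would insert $a^-_\lambda\sim-\lambda^{-1/2}$, $b^-_\lambda\sim-\lambda^{-1}$, $c_\lambda\sim-\lambda^{-1/2}$, $d_\lambda\sim-\lambda^{-1/2}/2$ together with $|e^{-\lambda^{1/2}y}|\le1$ and $|e^{-\lambda^{1/2}(1-y)}|\le1$ to read off $\sup_y|\Phi^{(0)}(y)| \lesssim |\lambda|^{-1/2}$. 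This yields the claimed estimate.

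I do not expect a genuine obstacle: the corollary is an assembly of the bounds from Lemma~\ref{lem_xi0pm} and the $\Phi[\Xi^{(0)}]$, $\mathcal{U}[\Xi^{(0)}]$, $\Phi^{(0)}$ estimates recorded right before it, exactly parallel to Corollary~\ref{corXi0}. The only points requiring a little care are working consistently with a fixed submultiplicative matrix norm (so that $\|\alpha_0\|<1$ and $\|\alpha_j\|\le\|\alpha_0\|^{j+1}$ make sense simultaneously) and justifying that the termwise summation of the resolvent equation over $j$ is valid, which follows from the $H^1$-convergence of \eqref{blexpHNS}.
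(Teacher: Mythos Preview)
Your proposal is correct and follows essentially the same approach as the paper. The paper does not actually give a separate proof of this corollary --- it simply states ``On the basis of all these bounds, one has easily the following analogue of Corollary~\ref{corXi0}'' --- and your write-up is precisely the spelling-out of those details: the bound $|\alpha_0|\lesssim |k|\,|\lambda|^{-3/2}$ combined with \eqref{k:hyp}--\eqref{lambda:hyp}, the geometric summation yielding $(\xi^{(0)}+\Xi^{(0)})(\mathrm{Id}+\alpha_0)^{-1}$, and the sup bound on $\Phi[\Omega_{BL}]$ via the explicit $\Phi^{(0)}$ and the already-recorded estimate on $\Phi[\Xi^{(0)}]$. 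Your extra care with a fixed submultiplicative matrix norm and with the telescoping verification of $\mathcal{U}[\Omega_{BL}]=\mathrm{Id}$ is appropriate for the matrix-valued setting and goes slightly beyond what the paper writes explicitly, but is exactly in the intended spirit.
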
 

\subsubsection{Hydrostatic part}
The construction of the hydrostatic terms $\omega^{(0)}_H$ and $F_H$, solving \eqref{eq:vort:bar:a:HNS} and \eqref{gtgtHNS}, is based as in Section \ref{sec:OmegaH} on the use of weighted norms $\| \omega/(-U''_s)^{1/2}\|_{L^2}$. This is actually simpler, as we do not have problems related to decay at infinity: we can use directly $U''_s$ in the  weight, instead of $U''_{s,k}$. From there, the estimates
\begin{align} \label{bdH1HNS}
\Re(\lambda) \| \frac{1}{(- U_{s}'')^{1/2}} F_H \|_{L^2}^2 + \|\frac{1}{(- U_{s}'')^{1/2}} \p_y F_H \|_{L^2}^2  & \lesssim  \frac{1}{\Re(\lambda) |\lambda|}\\ 
\label{bdIH0HNS}
\Re(\lambda) \| \frac{1}{(- U_{s}'')^{1/2}} \omega_{H}^{(0)} \|_{L^2}^2 + \|\frac{1}{(- U_{s}'')^{1/2}} \p_y \omega_{H}^{(0)} \|_{L^2}^2 \lesssim & \frac{ 1 }{\Re(\lambda)} \| \frac{1}{(- U_{s}'')^{1/2}} \omega_{init} \|_{L^2_y}^2 
\end{align}
are proved as the ones of $F_H$ and $\omega^{(0)}_{IH}$ in section \ref{sec:OmegaH}, and so is the bound 
$$ |\mathcal{U}[F_{H}]|  \lesssim \frac{1}{\Re(\lambda) |\lambda|^{1/2}}. $$
It follows that the matrix $M_H$ defined in \eqref{defi_MH} satisfies $|i k M_H| < 1$  under \eqref{k:hyp}-\eqref{lambda:hyp}, and the the series defining $\hat{\omega}$ converges. The estimate of Proposition \ref{pro:main2HNS} on $\hat{\omega}$ can be deduced like the one of $\omega_{inhom}$ in \eqref{estim_omega_inhom} (the better power of $k$ comes from the fact that we use the weight $U''_{s}$ instead of the modified $U''_{s,k}$).  This concludes the proof. 

\vspace{5 mm}

\noindent \textbf{Acknowledgements:} SI was partially supported by NSF Grant DMS-1802940 when this project was initiated, and also acknowledges a startup grant from UC Davis. D.G-V. acknowledges the support of SingFlows project, grant
ANR-18- CE40-0027 of the French National Research Agency (ANR) and of the Institut Universitaire de France.


%
%
%

\end{document}